\journal{Applied and Computational Harmonic Analysis}
\newcommand{\A}{\mathbb{A}}
\newcommand{\C}{\mathcal{C}}
\newcommand{\D}{\mathbb{D}}
\newcommand{\Dt}{\mathcal{D}^{(t)}}
\newcommand{\G}{\mathcal{G}}
\newcommand{\HH}{\mathcal{H}}
\newcommand{\I}{\mathcal{I}}
\newcommand{\K}{\mathbb{K}}
\newcommand{\N}{\mathbb{N}}
\newcommand{\R}{\mathbb{R}}
\newcommand{\T}{\mathbb{T}}
\newcommand{\X}{\mathcal{X}}
\theoremstyle{plain}
\newtheorem{assumption}{Assumption}
\newtheorem{theorem}{Theorem}[section]
\newtheorem{lemma}[theorem]{Lemma}
\theoremstyle{definition}
\newtheorem{remark}[theorem]{Remark}
\begin{document}

\begin{frontmatter}

\title{Diffusion maps for changing data\tnoteref{t1}}
\tnotetext[t1]{To appear in {\it Applied and Computational Harmonic
    Analysis}. arXiv:1209.0245.}

\author{Ronald R. Coifman}
\ead{coifman@math.yale.edu}

\author{Matthew J. Hirn\corref{cormatt}}
\ead{matthew.hirn@yale.edu}
\ead[url]{www.math.yale.edu/$\sim$mh644}

\cortext[cormatt]{Corresponding author}

\address{
Yale University \\
Department of Mathematics \\
P.O. Box 208283 \\
New Haven, Connecticut 06520-8283 \\
USA
}

\begin{abstract}
Graph Laplacians and related nonlinear mappings into low dimensional
spaces have been shown to be powerful tools for organizing high
dimensional data. Here we consider a data set $X$ in which the graph
associated with it changes depending on some set of parameters. We analyze this type of
data in terms of the diffusion distance and the corresponding
diffusion map. As the data changes over the parameter space, the low
dimensional embedding changes as well. We give a way to go between
these embeddings, and furthermore, map them all into a common
space, allowing one to track the evolution of $X$ in its intrinsic
geometry. A global diffusion distance is also defined, which gives a
measure of the global behavior of the data over the parameter
space. Approximation theorems in terms of randomly sampled data are
presented, as are potential applications. 
\end{abstract}

\begin{keyword}
diffusion distance; graph Laplacian; manifold learning; dynamic
graphs; dimensionality reduction; kernel method; spectral graph
theory
\end{keyword}

\end{frontmatter}

\section{Introduction} \label{sec: introduction}

In this paper we consider a changing graph depending on certain
parameters, such as time, over a fixed set
of data points. Given a set of parameters of interest, our goal is to
organize the data in such a way that we can perform meaningful
comparisons between data points derived from different parameters. In
some scenarios, a direct comparison may be possible; on the other
hand, the methods we develop are more general and can handle
situations in which the changes to the data prevent direct comparisons
across the parameter space. For example, one may consider situations
in which the mechanism or sensor measuring the data changes,
perhaps changing the observed dimension of the data. In order to make
meaningful comparisons between different realizations of the data, we
look for invariants in the data as it changes. We model the data
set as a normalized, weighted graph, and measure the similarity
between two points based on how the local subgraph around each point
changes over the parameter space. The framework we
develop will allow for the comparison of any two points derived from
any two parameters within the graph, thus allowing one to organize not
only along the data points but the parameter space as well.

An example of this type of data comes from hyperspectral image analysis. A
hyperspectral image is in fact a set of images of the same scene that
are taken at different wavelengths. Put together, these images form a
data cube in which the length and width of the cube correspond to spatial
dimensions, and the height of the cube corresponds to the different
wavelengths. Thus each pixel is in fact a vector corresponding to the
spectral signature of the materials contained in that pixel. Consider
the situation in which we are given two hyperspectral images of the
same scene, and we wish to highlight the anomalous (e.g., man made)
changes between the two. Assume though, that for each data set, different
cameras were used which measured different wavelengths, perhaps also
at different times of day under different weather conditions. In such
a scenario a direct comparison of the spectral signatures between
different days becomes much more difficult. Current
work in the field often times goes under the heading change detection,
as the goal is to often find small changes in a large scene; see
\cite{eismann:hsiChangeDetection2008} for more details.

Other possible areas for applications come from the modeling of social
networks as graphs. The relationships between people change over time
and determining how groups of people interact and evolve is a new and
interesting problem that has usefulness in marketing and other
areas. Financial markets are yet another area that lends itself to
analysis conducted over time, as are certain evolutionary biological
questions and even medical problems in which patient tests are updated
over the course of their lives.

The tools developed in this paper are inspired by high dimensional data analysis, in
which one assumes that the data has a hidden, low dimensional
structure (for example, the data lies on a low dimensional
manifold). The goal is to construct a mapping that parameterizes this
low dimensional structure, revealing the intrinsic geometry of the
data. We are interested in high dimensional data the evolves over some
set of paramaters, for example time. We are particularly interested in the
case in which one does not have a given metric by which to compare the
data across time, but can only compare data points from the same time
instance. The hyperspectral data situation described above is one such
example of this scenario; due to the differing sensor measurements at
different times, a direct comparison of images is impossible.

Let $\I$ denote our parameter space, and let $X_{\alpha}$, with
$\alpha \in \I$, be the data in question. The elements of our data
set are fixed, but the graph changes depending on the parameter
$\alpha$. In other words, there is a known bijection between
$X_{\alpha}$ and $X_{\beta}$ for $\alpha, \beta \in \I$, but the
corresponding graph weights of $X$ have changed between the two
parameters. For a fixed $\alpha$, the diffusion maps framework developed in
\cite{coifman:diffusionMaps2006} gives a multiscale way of organizing
$X_{\alpha}$. If $X_{\alpha}$ has a low dimensional structure, then
the diffusion map will take $X_{\alpha}$ into a low dimensional
Euclidean space that characterizes its geometry. More specifically,
the diffusion mapping maps $X_{\alpha}$ into
a particular $\ell^2$ space in which the usual $\ell^2$ distance
corresponds to the diffusion distance on $X_{\alpha}$; in the case of
a low dimensional data set, the $\ell^2$ space can be ``truncated'' to
$\R^d$, with the standard Euclidean distance. However, for
different parameters $\alpha$ and $\beta$, the diffusion map may take
$X_{\alpha}$ and $X_{\beta}$ into different $\ell^2$ spaces, thus
meaning that one cannot take the standard $\ell^2$ distance between
the elements of these two spaces. Our contribution here is to
generalize the diffusion maps framework so that it works independently
of the parameter $\alpha$. In particular, we derive formulas for
the distance between points in different embeddings that are in terms
of the individual diffusion maps of each space. It is even possible to
define a mapping from one embedding to the other, so that after
applying this mapping the standard $\ell^2$ distance can once again
be used to compute diffusion distances. In particular, this additional
mapping gives a common parameterization of the data across all of $\I$
that characterizes the evolving intrinsic geometry of the data. Once
this generalized framework has been
established, we are able to define a global distance between all of
$X_{\alpha}$ and $X_{\beta}$ based on the behavior of the diffusions
within each data set. This distance in turn allows one to model the
global behavior of $X_{\alpha}$ as it changes over $\I$.

Earlier results that use diffusion maps to
compare two data sets can be found in
\cite{coifman:compareSystems2005}. Furthermore, there is recent work contained in
\cite{lee:multiscaleTimeSeriesGraphs2011} that also involves combining
diffusion geometry principles via tree structures with evolving graphs. In
\cite{abdallah:diffusionThesis2010}, the author considers the case of an
evolving Riemannian manifold on which a diffusion process is spreading
as the manifold evolves. In our work, we separate out the two
processes, effectively using the diffusion process to organize the
evolution of the data. Also tangentially related to this work are the
results contained in \cite{memoli:shapeDistance2011} on shape
analysis, in which shapes are compared via their heat kernels. More
generally, this paper fits into the larger class of research that
utilizes nonlinear mappings into low dimensional spaces in order to
organize potentially high dimensional data; examples include locally
linear embedding (LLE) \cite{roweis:lle2000}, ISOMAP
\cite{tenenbaum:isomap2000}, Hessian LLE \cite{donoho:hessianlle2003},
Laplacian eigenmaps \cite{belkin:laplacianEigen2003}, and the
aforementioned diffusion maps \cite{coifman:diffusionMaps2006}.

An outline of this paper goes as follows: in the next section, we take
care of some notation and review the diffusion mapping first presented
in \cite{coifman:diffusionMaps2006}. In Section \ref{sec: graph
  distance} we generalize the diffusion distance for a data set that
changes over some parameter space, and show that it can be
computed in terms the spectral embeddings of the corresponding
diffusion operators. We also show how to map each of the embeddings
into one common embedding in which the $\ell^2$ distance is equal to
the diffusion distance. The global diffusion distance between graphs
is defined in Section \ref{sec: two graph distances}; it is also seen
to be able to be computed in terms of the eigenvalues and
eigenfunctions of the relevant diffusion operators. In Section
\ref{sec: convergence of finite approximations} we set up and state two
random sampling theorems, one for the diffusion distance and one for
the global diffusion distance. The proofs of these theorems are given
in \ref{sec: proof of finite graph approximation estimate}. Section
\ref{sec: applications} contains some applications, and we conclude
with some remarks and possible future directions in Section \ref{sec:
  conclusion}. 

\section{Notation and preliminaries} \label{sec: notation and preliminaries}

In this section we introduce some basic notation and review certain
preliminary results that will motivate our work.

\subsection{Notation}

Let $\R$ denote the real numbers and let $\N \triangleq \{1, 2, 3,
\ldots \}$ be the natural numbers. Often we will use constants that depend on certain
variables or parameters. We let $C(\cdot)$, $C_1(\cdot)$,
$C_2(\cdot)$, etc, denote these constants; note that they can change
from line to line.

We recall some basic notation from operator theory. Let $\HH$ be a
real, separable Hilbert space with scalar product $\langle \cdot,
\cdot \rangle$ and norm $\| \cdot \|$. Let $A: \HH \rightarrow \HH$ be
a bounded, linear operator, and let $A^*$ be its adjoint. The operator
norm of $A$ is defined as: 
\begin{equation*}
\| A \| \triangleq \sup_{\| f \| = 1} \| Af \|.
\end{equation*}
A bounded operator $A$ is Hilbert-Schmidt if 
\begin{equation*}
\sum_{i \geq 1} \| Ae^{(i)} \|^2 < \infty
\end{equation*}
for some (and hence any) Hilbert basis $\{ e^{(i)} \}_{i \geq 1}$. The
space of Hilbert-Schmidt operators is also a Hilbert space with scalar
product
\begin{equation*}
\langle A, B \rangle_{HS} \triangleq \sum_{i \geq 1} \langle Ae^{(i)}, Be^{(i)} \rangle.
\end{equation*}
We denote the corresponding norm as $\| \cdot \|_{HS}$. Note that if
an operator is Hilbert-Schmidt, then it is compact. A Hilbert-Schmidt
operator is trace class if
\begin{equation*}
\sum_{i \geq 1} \langle \sqrt{A^*A} e^{(i)}, e^{(i)} \rangle < \infty
\end{equation*}
for some (and hence any) Hilbert basis $\{ e^{(i)} \}_{i \geq 1}$. For any
trace class operator $A$, we have
\begin{equation*}
\mathrm{Tr}(A) \triangleq \sum_{i \geq 1} \langle Ae^{(i)}, e^{(i)} \rangle < \infty,
\end{equation*}
where $\mathrm{Tr}(A)$ is called the trace of $A$. The space of trace
class operators is a Banach space endowed with the norm
\begin{equation*}
\| A \|_{TC} \triangleq \mathrm{Tr}(\sqrt{A^*A}).
\end{equation*}
Note that the different operator norms are related as follows:
\begin{equation*}
\| A \| \leq \| A \|_{HS} \leq \| A \|_{TC}.
\end{equation*}
For more information on trace class operators, Hilbert Schmidt
operators and related topics, we refer the reader to
\cite{simon:traceIdeals2005}. 

\subsection{Diffusion maps} \label{sec: diffusion maps}

In this section we consider just a single data set that does not
change and review the notion of diffusion maps on
this data set. We assume that we are given a measure space $(X, \mu)$, consisting of
data points $X$ that are distributed according to $\mu$. We also have a positive,
symmetric kernel $k: X \times X \rightarrow \R$ that encodes how
similar two data points are. From $X$ and $k$, one can construct a
weighted graph $\Gamma \triangleq (X,k)$, in which the vertices of
$\Gamma$ are the data points $x \in X$, and the weight of the edge
$xy$ is given by $k(x,y)$. 

The diffusion maps framework developed in \cite{coifman:diffusionMaps2006} gives a
multiscale organization of the data set $X$. Additionally, if $X
\subset \R^d$ is high dimensional, yet lies on a low dimensional manifold, the
diffusion map gives an embedding into Euclidean space that parameterizes the
data in terms of its intrinsic low dimensional geometry. The idea is
that the kernel $k$ should only measure local similarities within $X$
at small scales, so as to be able to ``follow'' the low dimensional
structure. The diffusion map then pieces together the local
similarities via a random walk on $\Gamma$.

Define the density, $m: X \rightarrow \R$, as
\begin{equation} \label{eqn: degree map}
m(x) \triangleq \int\limits_X k(x,y) \, d\mu (y), \quad \text{for all
} x \in X.
\end{equation}
We assume that the density $m$ satisfies
\begin{equation} \label{eqn: degree bounds}
m(x) > 0, \quad \text{for } \mu \text{ a.e. } x \in X,
\end{equation}
and
\begin{equation} \label{eqn: degree map in L1}
m \in L^1(X, \mu).
\end{equation}
Given \eqref{eqn: degree bounds}, the weight function
\begin{equation*}
p(x,y) \triangleq \frac{k(x,y)}{m(x)}
\end{equation*}
is well defined for $\mu \otimes \mu$ almost every $(x,y) \in X \times X$. Although
$p$ is no longer symmetric, it does satisfy the following useful
property:
\begin{equation*}
\int\limits_X p(x,y) \, d\mu (y) = 1, \quad \text{for } \mu
\text{ a.e. } x \in X.
\end{equation*}
Therefore we can view $p$ as the transition kernel of a Markov chain
on $X$. Equivalently, if $p \in L^2(X \times X, \mu \otimes \mu)$, the integral operator
$P: L^2(X, \mu) \rightarrow L^2(X, \mu)$, defined as
\begin{equation*}
(Pf)(x) \triangleq \int\limits_X p(x,y) f(y) \, d\mu (y), \quad \text{for all }
f \in L^2(X, \mu),
\end{equation*}
is a diffusion operator. In particular, the value $p(x,y)$ represents
the probability of transition in one time step from the vertex $x$ to
the vertex $y$, which is proportional to the edge weight $k(x,y)$. For
$t \in \N$, let $p^{(t)}(x,y)$ represent the probability of transition in
$t$ time steps from the node $x$ to the node $y$; note that $p^{(t)}$ is
the kernel of the operator $P^t$. As shown in
\cite{coifman:diffusionMaps2006}, running the Markov chain forward, or
equivalently taking powers of $P$, reveals relevant geometric
structures of $X$ at different scales. In particular, small powers of
$P$ will segment the data set into several smaller clusters. As $t$ is
increased and the Markov chain diffuses across the
graph $\Gamma$, the clusters evolve and merge together until in the
limit as $t \rightarrow \infty$ the data set is grouped into one
cluster (assuming the graph is connected).

The phenomenon described above can be encapsulated by the diffusion
distance at time $t$ between two vertices $x$ and $y$ in the graph
$\Gamma$. In order to define the diffusion distance, we first note
that the Markov chain constructed above has the stationary
distribution $\pi: X \rightarrow \R$, where
\begin{equation*}
\pi (x) = \frac{m(x)}{\int_X m(y) \, d\mu (y)}.
\end{equation*}
Combining \eqref{eqn: degree bounds} and \eqref{eqn: degree map in L1}
we see that $\pi (x)$ is well defined for $\mu \text{ a.e. } x \in
X$. The diffusion distance between $x,y \in X$ is then defined as:
\begin{eqnarray*}
\widetilde{D}^{(t)} (x,y)^2 & \triangleq & \left\| p^{(t)}(x,\cdot) -
  p^{(t)}(y,\cdot) \right\|_{L^2(X, d\mu/\pi)}^2 \\
& = & \int\limits_X \left( p^{(t)}(x,u) - p^{(t)}(y,u) \right)^2 \, \frac{d\mu(u)}{\pi (u)}.
\end{eqnarray*}
A simplified formula for the diffusion distance can be found by
considering the spectral decomposition of $P$. Define the kernel $a: X
\times X \rightarrow \R$ as
\begin{equation*}
a(x,y) \triangleq \frac{\sqrt{m(x)}}{\sqrt{m(y)}} p(x,y) =
\frac{k(x,y)}{\sqrt{m(x)} \sqrt{m(y)}}, \quad \text{for } \mu \otimes
\mu \text{ a.e. } (x,y) \in X \times X.
\end{equation*}
If $a \in L^2(X \times X, \mu \otimes \mu)$, then $P$ has
a discrete set of eigenfunctions $\{ \upsilon^{(i)} \}_{i \geq 1}$ with corresponding
eigenvalues $\{ \lambda^{(i)} \}_{i \geq 1}$. It can then be shown that
\begin{equation} \label{eqn: simplified diff dist}
\widetilde{D}^{(t)} (x,y)^2 = \sum_{i \geq 1}
\left(\lambda^{(i)}\right)^{2t} \left( \upsilon^{(i)} (x) - \upsilon^{(i)} (y) \right)^2.
\end{equation}
Inspired by \eqref{eqn: simplified diff dist},
\cite{coifman:diffusionMaps2006} defines the diffusion map
$\Upsilon^{(t)}: X \rightarrow \ell^2$ at diffusion time $t$ to be:
\begin{equation*}
\Upsilon^{(t)} (x) \triangleq \left( \left(\lambda^{(i)}\right)^t
  \upsilon^{(i)} (x) \right)_{i \geq 1}. 
\end{equation*}
Therefore, the diffusion distance at time $t$ between $x,y \in X$
is equal to the $\ell^2$ norm of the difference between
$\Upsilon^{(t)}(x)$ and $\Upsilon^{(t)}(y)$:
\begin{equation*}
\widetilde{D}^{(t)}(x,y) = \left\| \Upsilon^{(t)}(x) - \Upsilon^{(t)}(y) \right\|_{\ell^2}.
\end{equation*}

One can also define a second diffusion distance in terms of the
symmetric kernel $a$ as opposed to the asymmetric kernel $p$. In
particular, define the operator $A: L^2(X, \mu) \rightarrow L^2(X,
\mu)$ as
\begin{equation*}
(Af)(x) \triangleq \int\limits_X a(x,y) f(y) \, d\mu(y), \quad \text{for all }
f \in L^2(X, \mu).
\end{equation*}
Like the diffusion operator $P$, the operator $A$ and its powers,
$A^t$, reveal the relevant geometric structures of the data set
$X$. Letting $a^{(t)}: X \times X \rightarrow \R$ denote the kernel of
the operator $A^t$, we can define another diffusion distance $D^{(t)}: X
\times X \rightarrow \R$ as follows:
\begin{eqnarray*}
D^{(t)}(x,y)^2 & \triangleq & \left\| a^{(t)}(x, \cdot) - a^{(t)}(y, \cdot)
\right\|_{L^2(X, \mu)}^2 \\
& = & \int\limits_X \left( a^{(t)}(x, u) - a^{(t)}(y, u) \right)^2 \, d\mu(u).
\end{eqnarray*}
As before, we consider the spectral decomposition of $A$. Let $\{
\lambda^{(i)} \}_{i \geq 1}$ and $\{ \psi^{(i)} \}_{i \geq 1}$ denote the
eigenvalues and eigenfunctions of $A$ (indeed, the nonzero eigenvalues
of $P$ and $A$ are the same), and define the diffusion map
$\Psi^{(t)}: X \rightarrow \ell^2$ (corresponding to $A$) as
\begin{equation*}
\Psi^{(t)}(x) = \left( \left(\lambda^{(i)}\right)^t \psi^{(i)}(x) \right)_{i \geq 1}.
\end{equation*}
Then, under the same assumptions as before, we have
\begin{equation} \label{eqn: symmetric diffusion maps formula}
D^{(t)}(x,y)^2 = \left\| \Psi^{(t)}(x) - \Psi^{(t)}(y) \right\|_{\ell^2}^2 = \sum_{i
  \geq 1} \left(\lambda^{(i)}\right)^t \left( \psi^{(i)}(x) -
  \psi^{(i)}(y) \right)^2.
\end{equation}

We make a few remarks concerning the differences between the two
formulations. First, we note that the original diffusion distance
$\widetilde{D}^{(t)}$ is defined as an $L^2$ distance under the weighted
measure $d\mu/\pi$. The second diffusion distance, $D^{(t)}$, due to the
symmetric normalization built into the kernel $a$, is defined only in
terms of the underlying measure $\mu$. Furthermore, the eigenfunctions
of $A$ are orthogonal, unlike the eigenfunctions of $P$. Finally, as
we have already noted, the eigenvalues of $P$ and $A$ are in fact the
same, and furthermore they are contained in $(-1, 1]$. If the graph
$\Gamma$ is connected, then the eigenfunction of $P$ with eigenvalue one is simply
the function that maps every element of $X$ to one. The corresponding
eigenfunction of $A$ though is the square root of the density, i.e.,
$\sqrt{m(x)}$. Thus, while both versions of the diffusion distance
merge smaller clusters into large clusters as $t$ grows, $\widetilde{D}^{(t)}$
will merge every data point into the same cluster in the limit as $t
\rightarrow \infty$, while $D^{(t)}$ will reflect the behavior of
the density $m$ in the limit as $t \rightarrow \infty$. 

Finally, recalling the discussion at the beginning of this section and
regardless of the particular operator used ($P$ or $A$), if $X$ has
a low dimensional structure to it, then the number of significant
eigenvalues will be small. In this case, from \eqref{eqn: symmetric
  diffusion maps formula} it is clear that one can in fact map $X$
into a low dimensional Euclidean space via the dominant
eigenfunctions while nearly preserving the diffusion distance. 

\section{Generalizing the diffusion distance for changing
  data} \label{sec: graph distance}

In this section we generalize the diffusion maps framework for data
sets with input parameters.

\subsection{The data model}

We now turn our attention to the original problem introduced at the
beginning of this paper. In its most general form, we are given a
parameter space $\I$ and a data set $X_{\alpha}$ that depends on
$\alpha \in \I$. The data points of $X_{\alpha}$ are given by
$x_{\alpha}$. The parameter space $\I$ can
be continuous, discrete, or completely arbitrary. Recall from the
introduction that we are working under the assumption that there is an
a priori known bijective correspondence between
$X_{\alpha}$ and $X_{\beta}$ for any $\alpha, \beta \in \I$ (in
\ref{sec: non-bijective correspondence} we discuss relaxing this
assumption).

We consider the following model throughout the remainder of this
paper. We are given a single measure space $(X,\mu)$ that we think of
as changing over $\I$. The changes in $X$ are encoded by a family of
metrics $d_{\alpha}: X \times X \rightarrow \R$, so that for each
$\alpha \in \I$ we have a metric measure space $X_{\alpha} =
(X,\mu,d_{\alpha})$. The measure $\mu$ here represents some underlying
distribution of the points in $X$ that does not change over
$\I$. There is no a priori assumption of a universal metric
$d: (X \times \I) \times (X \times \I) \rightarrow \R$ that can be
used to discern the distance between points taken from $X_{\alpha}$
and $X_{\beta}$ for arbitrary $\alpha, \beta \in \I$, $\alpha \neq
\beta$. 

\begin{remark} \label{rem: universal metric}
If such a universal metric does exist, then one
could still use the techniques developed in this paper, by defining
the metrics $d_{\alpha}$ in terms of the restriction of the universal
metric $d$ to the parameter $\alpha$. Alternatively, the original
diffusion maps machinery could be used by defining a kernel $k: (X
\times \I) \times (X \times \I) \rightarrow \R$ in terms of the
universal metric $d$. Further discussion along these lines is given in
Section \ref{sec: historical graph}.
\end{remark}

\subsection{Defining the diffusion distance on a family of
  graphs} \label{sec: defining the diffusion distance on Xtau}

Our goal is to reveal the relevant geometric structures of $X$
across the entire parameter space $\I$, and to furthermore have a way of
comparing structures from one parameter to other structures derived from a second
parameter. To do so, we shall generalize the diffusion distance so that we
can compare diffusions derived from different parameters. For each
instance of the data $X_{\alpha} = (X,\mu,d_{\alpha})$, we derive a
kernel $k_{\alpha}: X \times X \rightarrow \R$. The first step is to once
again consider each pairing $X$ and $k_{\alpha}$ as a weighted
graph, which we denote as $\Gamma_{\alpha} \triangleq (X, k_{\alpha})$.

Updating our notation for this dynamic setting, for each parameter $\alpha
\in \I$ we have the density $m_{\alpha}: X \rightarrow \R$ defined as 
\begin{equation*}
m_{\alpha}(x) \triangleq \int\limits_X k_{\alpha}(x,y) \, d\mu(y),
\quad \text{for all } \alpha \in \I, \enspace x \in X.
\end{equation*}
For reasons that shall become clear later, we slightly strengthen the
assumptions on $m_{\alpha}$ as compared to those in equations
\eqref{eqn: degree bounds} and \eqref{eqn: degree map in L1}. In
particular, we assume that
\begin{equation*}
m_{\alpha}(x) > 0, \quad \text{for all } \alpha \in \I, \enspace x \in X,
\end{equation*}
and
\begin{equation*}
m_{\alpha} \in L^1(X,\mu), \quad \text{for all } \alpha \in \I.
\end{equation*}
We then define two classes of kernels $a_{\alpha}: X \times X
\rightarrow \R$ and $p_{\alpha}: X \times X \rightarrow
\R$ in the same manner as earlier:
\begin{equation} \label{eqn: original atau}
a_{\alpha}(x,y) \triangleq \frac{k_{\alpha}(x,y)}{\sqrt{m_{\alpha}(x)} \sqrt{m_{\alpha}(y)}}, \quad
\text{for all } \alpha \in \I, \enspace (x,y) \in X \times X,
\end{equation}
and
\begin{equation*}
p_{\alpha}(x,y) \triangleq \frac{k_{\alpha}(x,y)}{m_{\alpha}(x)}, \quad
\text{for all } \alpha \in \I, \enspace (x,y) \in X \times X.
\end{equation*}
Assume that $a_{\alpha}, p_{\alpha} \in L^2(X \times X, \mu \otimes
\mu)$. Their corresponding integral operators are given by
$A_{\alpha}:L^2(X,\mu) \rightarrow L^2(X,\mu)$ and
$P_{\alpha}:L^2(X,\mu) \rightarrow L^2(X,\mu)$, where
\begin{equation} \label{eqn: original operator Atau}
(A_{\alpha} f)(x) \triangleq \int\limits_X a_{\alpha}(x,y)
f(y) \, d\mu(y), \quad \text{for all } \alpha \in \I, \enspace f \in L^2(X, \mu),
\end{equation}
and
\begin{equation*}
(P_{\alpha} f)(x) \triangleq \int\limits_X p_{\alpha}(x,y)
f(y) \, d\mu(y), \quad \text{for all } \alpha \in \I, \enspace f \in L^2(X, \mu).
\end{equation*}
Finally, we let $a_{\alpha}^{(t)}$ and $p_{\alpha}^{(t)}$ denote the
kernels of the integral operators $A_{\alpha}^t$ and $P_{\alpha}^t$,
respectively.

Returning to the task at hand, in order to compare $\Gamma_{\alpha}$
with $\Gamma_{\beta}$, it is possible to use the operators $A_{\alpha}$ and
$A_{\beta}$ or $P_{\alpha}$ and $P_{\beta}$. We choose to perform our
analysis using the symmetric operators, as it shall simplify certain
things. For now, consider the function $a_{\alpha}(x, \cdot)$ for a fixed $x \in X$. We
think of this function in the following way. Consider the graph
$\Gamma_{\alpha}$, and imagine dropping a unit of mass on the node
$x$ and allowing it to spread, or diffuse, throughout
$\Gamma_{\alpha}$. After one unit of time, the amount of mass that has
spread from $x$ to some other node $y$ is proportional to
$a_{\alpha}(x,y)$. Similarly, if we want to let the mass spread
throughout the graph for a longer period of time, we can, and the
amount of mass that has spread from $x$ to $y$ after $t$
units of time is then proportional to $a_{\alpha}^{(t)}(x, y)$. The
diffusion distance at time $t$, which is the $L^2$ norm of
$a_{\alpha}^{(t)}(x, \cdot) - a_{\alpha}^{(t)}(y, \cdot)$, is then
comparing the behavior of the diffusion centered at $x$ with the
behavior of the diffusion centered at $y$. We wish to extend
this idea for different parameters $\alpha$ and $\beta$. In other words,
we wish to have a meaningful distance between $x$ at parameter $\alpha$ and
$y$ at parameter $\beta$ that is based on the same principle of measuring how
their respective diffusions behave.

Our solution is to generalize the diffusion distance in the following
way. For each diffusion time $t \in \N$, we define a dynamic
diffusion distance $D^{(t)}: (X \times \I) \times (X \times \I)
\rightarrow \R$ as follows. Let $x_{\alpha} \triangleq (x,\alpha) \in X
\times \I$, and set
\begin{eqnarray*}
D^{(t)}(x_{\alpha}, y_{\beta})^2 & \triangleq & \left\| a_{\alpha}^{(t)}(x, \cdot) -
  a_{\beta}^{(t)}(y, \cdot) \right\|_{L^2(X, \mu)}^2 \\
& = & \int\limits_X \left( a_{\alpha}^{(t)}(x,u) - a_{\beta}^{(t)}(y,u) \right)^2
\, d\mu(u).
\end{eqnarray*}
This notion of distance can be thought of as comparing how the
neighborhood of $x_{\alpha}$ differs from the neighborhood of
$y_{\beta}$. In particular, if we are comparing the same data point
but at different parameters, for example $x_{\alpha}$ and $x_{\beta}$, the
diffusion distance between them will be small if their neighborhoods
do not change much from $\alpha$ to $\beta$. On the other hand, if say
a large change occurs at $x$ at parameter $\beta$, then the
neighborhood of $x_{\beta}$ should differ from the neighborhood of
$x_{\alpha}$ and so they will have a large diffusion distance between them.

Some more intuition about the quantity $D^{(t)}(x_{\alpha}, y_{\beta})$
can be derived from the triangle inequality. In particular, one
application of it gives
\begin{equation*}
D^{(t)}(x_{\alpha}, y_{\beta}) \leq D^{(t)}(x_{\alpha}, x_{\beta}) +
D^{(t)}(x_{\beta}, y_{\beta}).
\end{equation*}
Thus we see that $D^{(t)}(x_{\alpha}, y_{\beta})$ is bounded from above
by the change in $x$ from $\alpha$ to $\beta$ (i.e. the
quantity $D^{(t)}(x_{\alpha}, x_{\beta})$) plus the diffusion distance
between $x$ and $y$ in the graph $\Gamma_{\beta}$ (i.e. the quantity
$D^{(t)}(x_{\beta}, y_{\beta})$).

\begin{remark}
As noted earlier, we have chosen to generalize the diffusion
distance in terms of the symmetric kernels $a_{\alpha}$ as opposed to the
asymmetric kernels $p_{\alpha}$. The primary reason for this choice is
that when using the kernel $p_{\alpha}$ to compute the diffusion
distance between $x$ and $y$, we must use the weighted
measure $d\mu/\pi_{\alpha}$, where $\pi_{\alpha}$ denotes the
stationary distribution of the Markov chain on $\Gamma_{\alpha}$. Thus,
when computing the diffusion distance between $x_{\alpha}$ and
$y_{\beta}$, one must incorporate this weighted measure as well. Since
the stationary distribution will invariably change from $\alpha$ to
$\beta$, the most natural generalization in this case would be:
\begin{equation*}
\widetilde{D}^{(t)}(x_{\alpha},y_{\beta})^2 \triangleq \int\limits_X
\left( \frac{p_{\alpha}^{(t)}(x,u)}{\sqrt{\pi_{\alpha}(u)}} -
    \frac{p_{\beta}^{(t)}(y,u)}{\sqrt{\pi_{\beta}(u)}} \right)^2 \, d\mu(u).
\end{equation*}

Alternatively, in \cite{hirn:bistochasticKernel2012}, we describe how to
construct a bi-stochastic kernel $b: X \times X \rightarrow \R$ from a
more general affinity function. The kernel is bi-stochastic under a
particular weighted measure $\Omega^2 \mu$, where $\Omega: X \rightarrow
\R$ is derived from the affinity function. In this case, one can
define yet another alternate diffusion distance as:
\begin{equation*}
\widehat{D}^{(t)}(x_{\alpha},y_{\beta})^2 \triangleq \int\limits_X
\left( b_{\alpha}^{(t)}(x,u) \, \Omega_{\alpha}(u) -
  b_{\beta}^{(t)}(y,u) \, \Omega_{\beta}(u) \right)^2 \, d\mu(u).
\end{equation*}

In either case, the results that follow can be translated for these
particular diffusion distances by following the same arguments and
making minor modifications where necessary.
\end{remark}

\subsection{Diffusion maps for $\G = \{ \Gamma_{\alpha} \}_{\alpha \in
    \I}$} \label{sec: diffusion maps for Gammatau}

Analogous to the diffusion distance for a single graph $\Gamma = (X,k)$, we can
write the diffusion distance for $\G \triangleq \{ \Gamma_{\alpha} \}_{\alpha \in
  \I}$ in terms the spectral decompositions of $\{ A_{\alpha}
\}_{\alpha \in \I}$. We first collect the following mild, but
necessary, assumptions, some of which have already been stated.
\begin{assumption} \label{assumption 1}
We assume the following properties:
\begin{enumerate}
\item
$(X,\mu)$ is a $\sigma$-finite measure space and $L^2(X,\mu)$ is
separable.
\item
The kernel $k_{\alpha}$ is positive definite and symmetric for all $\alpha
\in \I$.
\item
For each $\alpha \in \I$, $m_{\alpha} \in L^1(X, \mu)$ and $m_{\alpha} > 0$.
\item \label{itm: trace class assumption}
For any $\alpha \in \I$, the operator $A_{\alpha}$ is trace class.
\end{enumerate}
\end{assumption}

A few remarks concerning the assumed properties. First, the reader may
have noticed that we replaced the assumption that $k_{\alpha}$ be
positive with the stronger assumption that it is positive
definite. This combined with the third property that $m_{\alpha}(x) >
0$ for all $x \in X$, implies that $a_{\alpha}$ is also positive
definite. Thus the operators $A_{\alpha}$ are positive and self
adjoint.

If one wished to revert back to the weaker assumption that $k_{\alpha}$
merely be positive, then the following adjustment could be
made. Clearly the symmetrically normalized kernel $a_{\alpha}$ will
still be positive, but the operator $A_{\alpha}$ may not be. However, one
could replace $A_{\alpha}$, for each $\alpha \in \I$, with the graph
Laplacian $L_{\alpha}: L^2(X,\mu) \rightarrow L^2(X,\mu)$, which is
defined as
\begin{equation*}
L_{\alpha} \triangleq \frac{1}{2}(I - A_{\alpha}),
\end{equation*}
where $I: L^2(X,\mu) \rightarrow L^2(X,\mu)$ is the identity
operator. The graph Laplacian $L_{\alpha}$ is a positive operator with
eigenvalues contained in $[0,1]$. The analysis that follows would
still apply with only minor adjustments. 

The fourth item that $A_{\alpha}$ be trace
class plays a key role in the results of this section, and itself implies
that these operators are Hilbert-Schmidt and so also compact. Thus, as a further
consequence, $a_{\alpha} \in L^2(X \times X, \mu \otimes \mu)$ for each
$\alpha \in \I$. Ideally, one would replace the fourth item with a
condition on the kernel $k_{\alpha}$ that implies that $A_{\alpha}$ is
trace class. Unfortunately, unlike the case of Hilbert Schmidt
operators, there is not a simple theorem of this nature. Further
information on trace class integral operators, as well as various
results, can be found in \cite{simon:traceIdeals2005,
  brislawn:kernelsTraceOperators1988,
  brislawn:traceableIntegralKernels1991}. 

We note that assumptions three and four are both satisfied if for each
$\alpha \in \I$ the kernel $k_{\alpha}$ is continuous, bounded from
above and below, and if the measure of $X$ is finite. That is, if for each $\alpha$,
\begin{equation*}
0 < C_1(\alpha) \leq k_{\alpha}(x,y) \leq C_2(\alpha) < \infty, \quad
\text{for all } (x,y) \in X \times X,
\end{equation*}
and
\begin{equation*}
\mu(X) < \infty,
\end{equation*}
then we can derive assumptions three and four.

As an immediate consequence of the properties contained in Assumption
\ref{assumption 1}, we see from the Spectral Theorem that for each $\alpha$ the operator
$A_{\alpha}$ has a countable collection of positive eigenvalues and orthonormal
eigenfunctions that form a basis for $L^2(X,\mu)$. Let $\{
\lambda_{\alpha}^{(i)} \}_{i \geq 1}$ and $\{\psi_{\alpha}^{(i)} \}_{i
  \geq 1}$ be the eigenvalues and a set of orthonormal eigenfunctions
of $A_{\alpha}$, respectively, so that
\begin{equation*}
(A_{\alpha} \psi_{\alpha}^{(i)})(x) = \lambda_{\alpha}^{(i)}
\psi_{\alpha}^{(i)}(x), \quad \text{for } \mu \text{ a.e. } x \in X,
\end{equation*}
and 
\begin{equation*}
\langle \psi_{\alpha}^{(i)}, \psi_{\alpha}^{(j)} \rangle_{L^2(X, \mu)} = \delta (i - j), \quad
\text{for all } i,j \geq 1.
\end{equation*}
Furthermore, as noted in \cite{coifman:diffusionMaps2006}, the
eigenvalues of $P_{\alpha}$ are bounded in absolute value by one, with at
least one eigenvalue equaling one. Since the eigenvalues of $A_{\alpha}$ and
$P_{\alpha}$ are the same, we also have
\begin{equation*}
1 = \lambda_{\alpha}^{(1)} \geq \lambda_{\alpha}^{(2)} \geq
\lambda_{\alpha}^{(3)} \geq \ldots, 
\end{equation*}
where $\lambda_{\alpha}^{(i)} \rightarrow 0$ as $i \rightarrow \infty$.

As with the original diffusion distance defined on a single data set,
our generalized notion of the diffusion distance for dynamic data
sets has a simplified form in terms of the spectral decompositions of
the relevant operators. 

\begin{theorem} \label{thm: simplified generalized diffusion distance}
Let $(X, \mu)$ be a measure space and $\left\{ k_{\alpha} \right\}_{\alpha
  \in \I}$ a family of kernels defined on $X$. If $(X, \mu)$ and
$\{k_{\alpha}\}_{\alpha \in \I}$
satisfy the properties of Assumption \ref{assumption 1}, then the
diffusion distance at time $t$ between $x_{\alpha}$ and $y_{\beta}$
can be written as:
\begin{align}
D^{(t)}(x_{\alpha}, y_{\beta})^2 = &\sum_{i \geq 1} \left(
  \lambda_{\alpha}^{(i)} \right)^{2t} \psi_{\alpha}^{(i)}(x)^2 + \sum_{j \geq 1}
\left( \lambda_{\beta}^{(j)} \right)^{2t} \psi_{\beta}^{(j)}(y)^2
\nonumber \\
&- 2 \sum_{i,j \geq 1} \left( \lambda_{\alpha}^{(i)} \right)^t \left(
  \lambda_{\beta}^{(j)} \right)^t \psi_{\alpha}^{(i)}(x) \,
\psi_{\beta}^{(j)}(y) \, \langle \psi_{\alpha}^{(i)}, \psi_{\beta}^{(j)}
\rangle_{L^2(X,\mu)}, \label{eqn: generalized diff dist formula}
\end{align}
where for each pair $(\alpha, \beta) \in \I \times \I$, equation
\eqref{eqn: generalized diff dist formula} converges in $L^2(X \times
X, \mu \otimes \mu)$. If, additionally, $k_{\alpha}$ is
continuous for each $\alpha \in \I$, $X \subseteq \R^d$ is closed, and
$\mu$ is a strictly positive Borel measure, then \eqref{eqn:
  generalized diff dist formula} holds for all $(x,y) \in X \times X$. 
\end{theorem}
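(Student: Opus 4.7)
The plan is to expand the squared $L^2$-norm
\[
D^{(t)}(x_\alpha, y_\beta)^2 = \|a_\alpha^{(t)}(x,\cdot)\|_{L^2(X,\mu)}^2 + \|a_\beta^{(t)}(y,\cdot)\|_{L^2(X,\mu)}^2 - 2\langle a_\alpha^{(t)}(x,\cdot), a_\beta^{(t)}(y,\cdot) \rangle_{L^2(X,\mu)},
\]
and evaluate each of the three pieces using the spectral decompositions of $A_\alpha^t$ and $A_\beta^t$. By Assumption~\ref{assumption 1}, each $A_\alpha$ is positive, self-adjoint, and trace class, hence so is $A_\alpha^t$ for every $t \in \N$. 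The Spectral Theorem together with the Hilbert--Schmidt kernel representation gives the Mercer-type expansion
\[
a_\alpha^{(t)} = \sum_{i \geq 1} \left(\lambda_\alpha^{(i)}\right)^t \psi_\alpha^{(i)} \otimes \psi_\alpha^{(i)}
\]
with convergence in $L^2(X \times X, \mu \otimes \mu)$; Fubini then shows that $a_\alpha^{(t)}(x,\cdot)$ lies in $L^2(X,\mu)$ for $\mu$-a.e.\ $x$, with $i$th Fourier coefficient $(\lambda_\alpha^{(i)})^t \psi_\alpha^{(i)}(x)$ in the orthonormal basis $\{\psi_\alpha^{(i)}\}_{i \geq 1}$. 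Parseval in each of these bases immediately identifies the two diagonal norms with the first two sums appearing in \eqref{eqn: generalized diff dist formula}.

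For the cross term, the strategy is to expand $a_\beta^{(t)}(y,\cdot)$ in the $\alpha$-basis. Using self-adjointness of $A_\beta^t$, the $i$th $\alpha$-Fourier coefficient is
\[
\langle a_\beta^{(t)}(y,\cdot), \psi_\alpha^{(i)}\rangle_{L^2(X,\mu)} = (A_\beta^t \psi_\alpha^{(i)})(y) = \sum_{j \geq 1} \left(\lambda_\beta^{(j)}\right)^t \langle \psi_\alpha^{(i)}, \psi_\beta^{(j)}\rangle_{L^2(X,\mu)} \,\psi_\beta^{(j)}(y),
\]
with convergence in $L^2(X,\mu)$ as a function of $y$. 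Applying Parseval in the $\alpha$-basis to the cross inner product and substituting this expansion yields precisely the double sum in \eqref{eqn: generalized diff dist formula}. The main obstacle is justifying the interchanges of summation and integration and verifying $L^2(X \times X, \mu \otimes \mu)$ convergence of the resulting double series; this is controlled by the Cauchy--Schwarz bound $|\langle \psi_\alpha^{(i)}, \psi_\beta^{(j)}\rangle_{L^2(X,\mu)}| \leq 1$, the normalization $\|\psi_\alpha^{(i)}\|_{L^2(X,\mu)} = 1$, and, crucially, the trace-class summability $\sum_i (\lambda_\alpha^{(i)})^t < \infty$, combined through Fubini--Tonelli and dominated convergence applied to truncations of the Mercer expansion.

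Finally, for the pointwise statement, continuity of $k_\alpha$, strict positivity of $m_\alpha$, closedness of $X \subseteq \R^d$, and the strictly positive Borel measure $\mu$ imply that $a_\alpha$ is a continuous positive-definite kernel, and so is $a_\alpha^{(t)}$. Mercer's theorem then gives absolute and uniform convergence on compact subsets of $X \times X$ of the eigenfunction expansion $a_\alpha^{(t)}(x,u) = \sum_i (\lambda_\alpha^{(i)})^t \psi_\alpha^{(i)}(x)\,\psi_\alpha^{(i)}(u)$, and likewise for $\beta$. This upgrades each of the expansions used above from $L^2$- or almost-everywhere equalities to genuine pointwise identities, and consequently \eqref{eqn: generalized diff dist formula} holds for every $(x,y) \in X \times X$.
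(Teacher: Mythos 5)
Your proposal is correct and follows essentially the same route as the paper: expand $D^{(t)}(x_\alpha,y_\beta)^2$ into the two diagonal terms and the cross term, insert the $L^2(X\times X,\mu\otimes\mu)$ spectral expansion of $a_\alpha^{(t)}$ guaranteed by the trace-class (hence Hilbert--Schmidt) assumption, justify the interchanges via trace-class summability together with Fubini--Tonelli, and invoke Mercer's theorem for the pointwise refinement. Your evaluation of the cross term via Parseval in the $\alpha$-eigenbasis and the identity $\langle a_\beta^{(t)}(y,\cdot),\psi_\alpha^{(i)}\rangle=(A_\beta^t\psi_\alpha^{(i)})(y)$ is only a cosmetic repackaging of the paper's direct substitution of both expansions followed by the Fubini switch.
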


Notice that equation \eqref{eqn: generalized diff dist formula}  is in fact an
extension of the formula given for the diffusion distance on a single
data set. Indeed, if one were to take $x_{\alpha}$ and
$y_{\beta} = y_{\alpha}$, the formula given in \eqref{eqn: generalized
  diff dist formula} would simplify to \eqref{eqn: symmetric diffusion
  maps formula} with the underlying kernel taken to be
$k_{\alpha}$. Thus, it is natural to define the diffusion map $\Psi_{\alpha}^{(t)}:X
\rightarrow \ell^2$ for the parameter $\alpha$ and diffusion time $t$ as
\begin{equation} \label{eqn: alpha diffusion map}
\Psi_{\alpha}^{(t)}(x) \triangleq \left( \left( \lambda_{\alpha}^{(i)} \right)^t
  \psi_{\alpha}^{(i)}(x) \right)_{i \geq 1}.
\end{equation}
For $v \in \ell^2$, let $v[i]$ denote the $i^{\text{th}}$ element of
the sequence $u$. Using \eqref{eqn: alpha diffusion map}, one can
write equation \eqref{eqn: generalized diff dist formula} as
\begin{equation}
D^{(t)}(x_{\alpha}, y_{\beta})^2 = \big\|\Psi_{\alpha}^{(t)}(x)
\big\|_{\ell^2}^2 + \big\|\Psi_{\beta}^{(t)}(y)\big\|_{\ell^2}^2 - 2
\sum_{i,j \geq 1} \Psi_{\alpha}^{(t)}(x)[i] \,
\Psi_{\beta}^{(t)}(y)[j] \, \langle \psi_{\alpha}^{(i)},
\psi_{\beta}^{(j)} \rangle_{L^2(X, \mu)}. \label{eqn: generalized diff
  maps formula}
\end{equation}
In particular, one has in general that
\begin{equation*}
D^{(t)}(x_{\alpha},y_{\beta}) \neq \left\|\Psi_{\alpha}^{(t)}(x) - \Psi_{\beta}^{(t)}(y)\right\|_{\ell^2}.
\end{equation*}
Intuitively, the thing to take away from this discussion is that for
each parameter $\alpha \in \I$, the diffusion map
$\Psi_{\alpha}^{(t)}$ maps $X$ into an $\ell^2$ space that itself also
depends on $\alpha$. The $\ell^2$ embedding corresponding to $\alpha$
is not the same as the $\ell^2$ embedding corresponding to $\beta \in
\I$, but equation \eqref{eqn: generalized diff maps formula} gives a way of
computing distances between the different $\ell^2$ embeddings.

Also, once again paralleling the
original diffusion distance, we see that if the eigenvalues of
$A_{\alpha}$ and $A_{\beta}$ decay sufficiently fast, then
the diffusion distance can be well approximated by a small, finite
number of eigenvalues and eigenfunctions of these two operators. In
particular, we need only map $\Gamma_{\alpha}$ and $\Gamma_{\beta}$ into finite
dimensional Euclidean spaces.

\begin{proof}[Proof of Theorem \ref{thm: simplified generalized diffusion distance}]
We first use the fact that for each $\alpha \in \I$, $A_{\alpha}$ is a
positive, self-adjoint, trace class
operator. Thus $A_{\alpha}$ is Hilbert-Schmidt, and so we know that
for each $\alpha \in \I$ (see, for example, Theorem 2.11 from
\cite{simon:traceIdeals2005}),
\begin{equation} \label{eqn: Diagonalize a1}
a_{\alpha}^{(t)} (x,y) = \sum_{i \geq 1}
\left(\lambda_{\alpha}^{(i)}\right)^t \psi_{\alpha}^{(i)}(x) \,
\psi_{\alpha}^{(i)}(y), \quad \text{with convergence in } L^2(X \times
X, \mu \otimes \mu).
\end{equation}
If the additional assumptions hold that $k_{\alpha}$ is continuous,
$X$ is a closed subset of $\R^d$, and
$\mu$ is a strictly positive Borel measure, then by Mercer's Theorem
(see \cite{mercer:mercersTheorem1909, minh:mercersTheoremFeature2006})
equation \eqref{eqn: Diagonalize a1} will hold for all $(x,y) \in X \times X$. In this
case the proof can be easily amended to get the stronger result; we
omit the details. 

Expand the formula for $D^{(t)}(x_{\alpha}, y_{\beta})$ as follows:
\begin{equation}
D^{(t)}(x_{\alpha}, y_{\beta})^2 = \int\limits_X \left( a_{\alpha}^{(t)}
  (x,u)^2 - 2a_{\alpha}^{(t)}(x,u) \, a_{\beta}^{(t)}(y,u) +
  a_{\beta}^{(t)}(y,u)^2 \right) \, d\mu (u). \label{eqn: expand integrand of dt} 
\end{equation}
We shall evaluate each of the three terms in \eqref{eqn: expand
  integrand of dt} separately. For the cross term we have,
\begin{equation}
\int\limits_X a_{\alpha}^{(t)}(x,u) \, a_{\beta}^{(t)} (y,u) \, d\mu (u)
= \int\limits_X \left( \sum_{i,j \geq 1} \left(\lambda_{\alpha}^{(i)}\right)^t
\left(\lambda_{\beta}^{(j)}\right)^t \psi_{\alpha}^{(i)}(x) \,
\psi_{\beta}^{(j)}(y) \, \psi_{\alpha}^{(i)}(u) \, \psi_{\beta}^{(j)}(u) \right) \,
d\mu (u), \label{eqn: need to switch inside int and sum}
\end{equation}
with convergence in $L^2(X \times X, \mu \otimes \mu)$. At this point
we would like to switch the integral and the summation in line
\eqref{eqn: need to switch inside int and sum}; this can be done by
applying Fubini's Theorem, which requires one to show the
following:
\begin{equation} \label{eqn: suff condition to switch}
\sum_{i,j \geq 1} \int\limits_X \left|
  \left(\lambda_{\alpha}^{(i)}\right)^t
  \left(\lambda_{\beta}^{(j)}\right)^t \psi_{\alpha}^{(i)}(x) \,
  \psi_{\beta}^{(j)}(y) \, \psi_{\alpha}^{(i)}(u) \, \psi_{\beta}^{(j)}(u) \right| \, d\mu (u) < \infty.
\end{equation}
One can prove \eqref{eqn: suff condition to switch} for $\mu \otimes
\mu$ almost every $(x,y) \in X \times X$ through the use of
H\"{o}lder's Theorem and the fact that we assumed that $A_{\alpha}$ is a
trace class operator for each $\alpha \in \I$; we leave the details to
the reader. Thus for $\mu \otimes \mu$ almost every $(x,y) \in X
\times X$ we can switch the integral and the summation in line
\eqref{eqn: need to switch inside int and sum}, which gives:
\begin{equation}
\int\limits_X a_{\alpha}^{(t)}(x,u) \, a_{\beta}^{(t)}(y,u) \,
d\mu (u) = \sum_{i,j \geq 1} \left(\lambda_{\alpha}^{(i)}\right)^t
\left(\lambda_{\beta}^{(j)}\right)^t \psi_{\alpha}^{(i)}(x) \,
\psi_{\beta}^{(j)}(y) \, \langle \psi_{\alpha}^{(i)}, \psi_{\beta}^{(j)}
\rangle_{L^2(X, \mu)}, \label{eqn: need to switch int and sum}
\end{equation}
again with convergence in $L^2(X \times X, \mu \otimes \mu)$. A
similar calculation shows that, for each $\alpha \in \I$,
\begin{equation} \label{eqn: squared term 1}
\int\limits_X a_{\alpha}^{(t)}(x,u)^2 \, d\mu (u) = \sum_{i \geq 1}
\left(\lambda_{\alpha}^{(i)}\right)^{2t} \psi_{\alpha}^{(i)}(x)^2,
\quad \text{with convergence in } L^2(X,\mu).
\end{equation}
Combining equations \eqref{eqn: need to switch int and sum} and
\eqref{eqn: squared term 1} we arrive at the desired formula for
$D^{(t)}(x_{\alpha}, y_{\beta})$.
\end{proof}

\begin{remark} \label{rem: asymptotic diff dist}
One interesting aspect of the diffusion distance is its asymptotic
behavior as $t \rightarrow \infty$, and in particular that behavior
when each graph $\Gamma_{\alpha} \in \G$ is a 
connected graph. In this case, each operator $A_{\alpha}$ has
precisely one eigenvalue equal to one, and the corresponding
eigenfunction is the square root of the density (normalized), i.e.,
\begin{equation*}
1 = \lambda_{\alpha}^{(1)} > \lambda_{\alpha}^{(2)} \geq
\lambda_{\alpha}^{(3)} \geq \ldots, \quad \text{and} \quad
\psi_{\alpha}^{(1)} = \left.\sqrt{m_{\alpha}} \, \middle/ \,
  \left\|\sqrt{m_{\alpha}}\right\|_{L^2(X,\mu)}.\right.
\end{equation*}

To compute $\lim_{t \rightarrow \infty}
D^{(t)}(x_{\alpha},y_{\beta})$, we utilize equation \eqref{eqn:
  generalized diff dist formula} from Theorem \ref{thm: simplified
  generalized diffusion distance} and pull the limit as $t \rightarrow
\infty$ inside the summations. We justify the interchange of the limit
and the sum by utilizing the Dominated Convergence
Theorem. In particular, treat each sum as an integral over $\N$ with
the counting measure. Let us focus on the double summation in
\eqref{eqn: generalized diff dist formula}; the other two single
summations follow from similar arguments. For the double summation, we
have a sequence of functions
\begin{equation*}
f_t(i,j) \triangleq \left(\lambda_{\alpha}^{(i)}\right)^t
\left(\lambda_{\beta}^{(j)}\right)^t \psi_{\alpha}^{(i)}(x) \,
\psi_{\beta}^{(j)}(y) \, \langle \psi_{\alpha}^{(i)}, \psi_{\beta}^{(j)}
\rangle_{L^2(X, \mu)}.
\end{equation*}
We dominate the sequence $\{f_t\}_{t \geq 1}$ with the function
$g(i,j)$ as follows:
\begin{equation*}
\left|f_t(i,j)\right| \leq g(i,j) \triangleq \left| \lambda_{\alpha}^{(i)}
  \lambda_{\beta}^{(j)} \psi_{\alpha}^{(i)}(x) \, \psi_{\beta}^{(j)}(y) \right|.
\end{equation*}
We claim that $g$ is integrable over $\N \times \N$ with the counting
measure. To see this, first note:
\begin{equation*}
\sum_{i,j \geq 1} g(i,j) = \left(\sum_{i \geq 1} \left|
    \lambda_{\alpha}^{(i)} \psi_{\alpha}^{(i)}(x) \right| \right)
\left(\sum_{j \geq 1} \left| \lambda_{\beta}^{(j)}
    \psi_{\beta}^{(j)}(y) \right| \right).
\end{equation*}
Now define the function $h_{\alpha}: X \rightarrow \R$ as:
\begin{equation*}
h_{\alpha}(x) \triangleq \sum_{i \geq 1} \left|\lambda_{\alpha}^{(i)}
  \psi_{\alpha}^{(i)}(x) \right|.
\end{equation*}
Using Tonelli's Theorem, H\"{o}lder's Theorem, and the fact that
$A_{\alpha}$ is trace class, one can show that $h_{\alpha} \in
L^2(X,\mu)$. Thus, $h_{\alpha}(x) < \infty$ for $\mu$ almost every $x
\in X$. In particular, for $\mu \otimes \mu$ almost every $(x,y) \in X
\times X$, the function $g(i,j)$ is integrable. To conclude, the
Dominated Convergence Theorem holds, and for $\mu \otimes \mu$ almost
every $(x,y) \in X \times X$, we can interchange the summations and
the limit as $t \rightarrow \infty$.

From here, it is quite simple to show:
\begin{equation} \label{eqn: asymptotic diff dist}
\lim_{t \rightarrow \infty}D^{(t)}(x_{\alpha},y_{\beta})^2 =
\left(\psi_{\alpha}^{(1)}(x) - \psi_{\beta}^{(1)}(y) \right)^2 +
\psi_{\alpha}^{(1)}(x) \, \psi_{\beta}^{(2)}(y)
\left\| \psi_{\alpha}^{(1)} - \psi_{\beta}^{(1)} \right\|_{L^2(X,\mu)}^2.
\end{equation}
Recalling that the first eigenfunctions are simply the normalized
densities, we see that the asymptotic diffusion distance can be computed without
diagonalizing any of the diffusion operators. Furthermore, it is not
just the pointwise difference between the densities, but
rather the asymptotic diffusion distance is the pointwise difference plus a term that takes into
account the global difference between the two densities. It can be
used as a fast way of determing significant changes from $\alpha$
to $\beta$; see Section \ref{sec: change detection hsi data} for an example.
\end{remark} 

\subsection{Mapping one diffusion embedding into another}

As mentioned in the previous subsection, the diffusion map
$\Psi_{\alpha}^{(t)}$ takes $X$ into an $\ell^2$ space that itself
depends on $\alpha$. While \eqref{eqn: generalized diff maps formula}
gives a way of computing distances between two diffusion embeddings,
it is also possible to map the embedding $\Psi_{\beta}^{(t)}(X)$
into the $\ell^2$ space of $\Psi_{\alpha}^{(t)}(X)$. Furthermore, the operator that does so is
quite simple. The eigenfunctions $\{ \psi_{\alpha}^{(i)} \}_{i \geq
  1}$ are essentially a basis for the embedding of $X$ with parameter $\alpha$, while the eigenfunctions $\{\psi_{\beta}^{(i)} \}_{i \geq 1}$ are essentially a basis for the embedding of $X$ with parameter
$\beta$. The operator that maps one space into the other is
similar to the change of basis operator. Define $O_{\beta \rightarrow
  \alpha}: \ell^2 \rightarrow \ell^2$ as
\begin{equation*}
O_{\beta \rightarrow \alpha} v \triangleq \left( \sum_{j \geq 1} v[j]
  \, \langle \psi_{\alpha}^{(i)}, \psi_{\beta}^{(j)}
  \rangle_{L^2(X,\mu)} \right)_{i \geq 1}, \quad \text{for all } v \in \ell^2.
\end{equation*}

By the Spectral Theorem, we know that the eigenfunctions of
$A_{\alpha}$ can be taken to form an orthonormal basis for
$L^2(X,\mu)$. Thus, the operator $O_{\alpha \rightarrow \beta}$
preserves inner products. Indeed,
define the operator $S_{\alpha}: L^2(X,\mu) \rightarrow \ell^2$ as
\begin{equation*}
S_{\alpha}f \triangleq \left( \langle \psi_{\alpha}^{(i)}, f \rangle_{L^2(X,\mu)}
\right)_{i \geq 1}, \quad \text{for all } f \in L^2(X,\mu).
\end{equation*}
The adjoint of $S_{\alpha}$, $S_{\alpha}^*: \ell^2 \rightarrow
L^2(X,\mu)$, is then given by
\begin{equation*}
S_{\alpha}^*v = \sum_{i \geq 1} v[i] \, \psi_{\alpha}^{(i)}, \quad
\text{for all } v \in \ell^2.
\end{equation*}
Since $\{ \psi_{\alpha}^{(i)}\}_{i \geq 1}$ is an orthonormal basis
for $L^2(X,\mu)$, $S_{\alpha}^* S_{\alpha} =
I_{L^2(X,\mu)}$. Therefore, for any $v,w \in \ell^2$,
\begin{align}
\langle O_{\beta \rightarrow \alpha} v, O_{\beta \rightarrow \alpha}
w\rangle_{\ell^2} &= \sum_{j,k \geq 1} v[j] \, w[k] \left( \sum_{i \geq 1} \langle
  \psi_{\alpha}^{(i)}, \psi_{\beta}^{(j)} \rangle_{L^2(X,\mu)} \langle
  \psi_{\alpha}^{(i)}, \psi_{\beta}^{(k)} \rangle_{L^2(X,\mu)} \right) \nonumber \\
&= \sum_{j,k \geq 1} v[j] \, w[k] \, \langle S_{\alpha} \psi_{\beta}^{(j)}, S_{\alpha}
\psi_{\beta}^{(k)} \rangle_{\ell^2} \nonumber \\
&= \sum_{j,k \geq 1} v[j] \, w[k] \, \delta(j - k) \nonumber \\
&= \langle v, w \rangle_{\ell^2} \label{eqn: change of basis inner product}
\end{align}
As asserted, the operator $O_{\beta \rightarrow \alpha}$ preserves inner
products. In particular, it preserves norms, so we have
\begin{align*}
\big\| \Psi_{\alpha}^{(t)}(x) - O_{\beta
    \rightarrow\alpha}\Psi_{\beta}^{(t)}(y) \big\|_{\ell^2}^2 &=
\big\| \Psi_{\alpha}^{(t)}(x) \big\|_{\ell^2}^2 + \big\| O_{\beta
    \rightarrow \alpha} \Psi_{\beta}^{(t)}(y) \big\|_{\ell^2}^2 -
2\langle \Psi_{\alpha}^{(t)}(x), O_{\beta \rightarrow
  \alpha}\Psi_{\beta}^{(t)}(y) \rangle_{\ell^2} \\
&= \big\| \Psi_{\alpha}^{(t)}(x) \big\|_{\ell^2}^2 + \big\| \Psi_{\beta}^{(t)}(y)
\big\|_{\ell^2}^2 - 2 \sum_{i,j \geq 1}
\Psi_{\alpha}^{(t)}(x)[i] \, \Psi_{\beta}^{(t)}(y)[j] \, \langle \psi_{\alpha}^{(i)},
\psi_{\beta}^{(j)} \rangle_{L^2(X, \mu)} \\
&= D^{(t)}(x_{\alpha},x_{\beta}).
\end{align*}

Thus the operator $O_{\beta \rightarrow \alpha}$ maps the diffusion embedding
$\Psi_{\beta}^{(t)}(X)$ into the same $\ell^2$ space as the
diffusion embedding $\Psi_{\alpha}^{(t)}(X)$, and furthermore
preserves the diffusion distance between the two spaces; it is easy to
see that it also preserves the diffusion distance within
$\Gamma_{\beta}$. In particular, it is possible to view both embeddings in the same
$\ell^2$ space, where the $\ell^2$ distance is equal to the
diffusion distance both within each graph $\Gamma_{\alpha}$ and
$\Gamma_{\beta}$ and between the two graphs.

Suppose now that we have three or more parameters in $\I$ that are of
interest. Can we map all diffusion embeddings of these parameters into
the same $\ell^2$ space, while preserving the diffusion distances? The
answer turns out to be ``yes,'' and in fact we can use the same
mapping as before. Let $\gamma \in \I$ be the base parameter to which all
other parameters are mapped, and let $\alpha, \beta \in \I$ be two other
arbitrary parameters. We know that we can map the
embedding $\Psi_{\alpha}^{(t)}(X)$ into the $\ell^2$ space of
$\Psi_{\gamma}^{(t)}(X)$, and that we can also map the embedding
$\Psi_{\beta}^{(t)}(X)$ into the $\ell^2$ space of $\Psi_{\gamma}^{(t)}(X)$,
and that these mappings will preserve diffusion distances both within
$\Gamma_{\gamma}$, $\Gamma_{\alpha}$, and $\Gamma_{\beta}$, and also
between $\Gamma_{\gamma}$ and $\Gamma_{\alpha}$ as well as between
$\Gamma_{\gamma}$ and $\Gamma_{\beta}$. We just need to show that
they preserve the diffusion distance between points of
$\Gamma_{\alpha}$ and points of $\Gamma_{\beta}$. Using essentially
the same calculation as the one used to derive \eqref{eqn: change of
  basis inner product}, one can obtain the following for any $v,w \in \ell^2$:
\begin{equation*}
\langle O_{\alpha \rightarrow \gamma} v,
O_{\beta \rightarrow \gamma} w \rangle_{\ell^2} = \sum_{i,j \geq 1}
v[i] \, w[j] \, \langle \psi_{\alpha}^{(i)}, \psi_{\beta}^{(j)} \rangle_{L^2(X,\mu)}.
\end{equation*}
But then we have:
\begin{align*}
\big\| O_{\alpha \rightarrow \gamma}
  \vphantom{\Psi_{\beta}^{(t)}(x)} \Psi_{\alpha}^{(t)}(x)
  - O_{\beta \rightarrow \gamma}\Psi_{\beta}^{(t)}(y)
  \big\|_{\ell^2}^2 &= \big\| O_{\alpha \rightarrow
    \gamma}\Psi_{\alpha}^{(t)}(x) \big\|_{\ell^2}^2 + \big\|O_{\beta
    \rightarrow \gamma}\Psi_{\beta}^{(t)}(y)\big\|_{\ell^2}^2 -
  2\langle O_{\alpha \rightarrow \gamma}\Psi_{\alpha}^{(t)}(x), O_{\beta
  \rightarrow \gamma}\Psi_{\beta}^{(t)}(y) \rangle_{\ell^2}, \\
&= \big\| \Psi_{\alpha}^{(t)}(x) \big\|_{\ell^2}^2 + \big\| \Psi_{\beta}^{(t)}(y)
\big\|_{\ell^2}^2 - 2 \sum_{i,j \geq 1}
\Psi_{\alpha}^{(t)}(x)[i] \, \Psi_{\beta}^{(t)}(y)[j] \, \langle
\psi_{\alpha}^{(i)}, \psi_{\beta}^{(j)} \rangle_{L^2(X,\mu)} \\
&= D^{(t)}(x_{\alpha},y_{\beta}).
\end{align*}
Thus, after mapping the $\alpha$ and $\beta$ embeddings appropriately
into the $\gamma$ embedding, the $\ell^2$ distance is equal to all
possible diffusion distances. It is therefore possible to map each of the
embeddings $\{ \Psi_{\alpha}^{(t)}(X) \}_{\alpha \in \I}$ into the same
$\ell^2$ space. In particular, one can track the evolution of the
intrinsic geometry of $X$ as it changes over $\I$. We summarize this
discussion in the following theorem.

\begin{theorem} \label{thm: common embedding}
Let $(X,\mu)$ be a measure space and $\{ k_{\alpha} \}_{\alpha \in \I}$ a
family of kernels defined on $X$. Fix a parameter $\gamma \in \I$. If $(X, \mu)$ and
$\{k_{\alpha}\}_{\alpha \in \I}$ satisfy the properties of Assumption
\ref{assumption 1}, then for all $(\alpha, \beta) \in \I \times \I$,
\begin{equation*}
D^{(t)}(x_{\alpha}, y_{\beta}) = \left\| O_{\alpha \rightarrow \gamma}
\Psi_{\alpha}^{(t)}(x) - O_{\beta \rightarrow
  \gamma}\Psi_{\beta}^{(t)}(y) \right\|_{\ell^2}, \quad \text{with
convergence in } L^2(X \times X, \mu \otimes \mu).
\end{equation*}
\end{theorem}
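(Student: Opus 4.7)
My plan is to verify the theorem by expanding the squared $\ell^2$ norm on the right-hand side and matching the result with the formula for $D^{(t)}(x_\alpha, y_\beta)^2$ from Theorem \ref{thm: simplified generalized diffusion distance}. The strategy runs exactly parallel to the three-parameter discussion preceding the theorem statement, but specialized to arbitrary $\alpha, \beta \in \I$ and a fixed base parameter $\gamma$.

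The proof rests on two identities about the change-of-basis operators $O_{\alpha \to \gamma}$. The first is norm preservation, established in \eqref{eqn: change of basis inner product} by the observation that $\sum_{i \geq 1} \langle \psi_\gamma^{(i)}, f\rangle_{L^2(X,\mu)} \langle \psi_\gamma^{(i)}, g\rangle_{L^2(X,\mu)} = \langle f, g\rangle_{L^2(X,\mu)}$ for all $f, g \in L^2(X, \mu)$, since $\{\psi_\gamma^{(i)}\}_{i \geq 1}$ is an orthonormal basis. Applied with $f = \psi_\alpha^{(a)}$ and $g = \psi_\alpha^{(b)}$, this yields $\|O_{\alpha \to \gamma} v\|_{\ell^2} = \|v\|_{\ell^2}$ for every $v \in \ell^2$.

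The second identity is the mixed inner product formula
\begin{equation*}
\langle O_{\alpha \to \gamma} v, O_{\beta \to \gamma} w \rangle_{\ell^2} = \sum_{i,j \geq 1} v[i]\, w[j]\, \langle \psi_\alpha^{(i)}, \psi_\beta^{(j)}\rangle_{L^2(X,\mu)},
\end{equation*}
which I would obtain by expanding the left-hand inner product, interchanging summations to produce $\sum_{i,j} v[i]\, w[j] \sum_{k \geq 1} \langle \psi_\gamma^{(k)}, \psi_\alpha^{(i)}\rangle_{L^2(X,\mu)} \langle \psi_\gamma^{(k)}, \psi_\beta^{(j)}\rangle_{L^2(X,\mu)}$, and then collapsing the inner sum in $k$ by Parseval applied to $\psi_\alpha^{(i)}$ and $\psi_\beta^{(j)}$ in the basis $\{\psi_\gamma^{(k)}\}$. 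Expanding $\|O_{\alpha \to \gamma}\Psi_\alpha^{(t)}(x) - O_{\beta \to \gamma}\Psi_\beta^{(t)}(y)\|_{\ell^2}^2$, the first two squared-norm terms collapse by norm preservation to $\|\Psi_\alpha^{(t)}(x)\|_{\ell^2}^2$ and $\|\Psi_\beta^{(t)}(y)\|_{\ell^2}^2$, while the cross term becomes the double sum in \eqref{eqn: generalized diff maps formula}. Comparing with that formula identifies the right-hand side with $D^{(t)}(x_\alpha, y_\beta)^2$, and the claimed convergence in $L^2(X \times X, \mu \otimes \mu)$ is inherited from Theorem \ref{thm: simplified generalized diffusion distance}.

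The main obstacle I anticipate is justifying the interchange of summation order in the derivation of the mixed inner product formula above, and ultimately the evaluation of the cross term at $v = \Psi_\alpha^{(t)}(x)$, $w = \Psi_\beta^{(t)}(y)$. Absolute summability of the triple sum $\sum_{i,j,k} |v[i]\, w[j]\, \langle \psi_\gamma^{(k)}, \psi_\alpha^{(i)}\rangle\langle \psi_\gamma^{(k)}, \psi_\beta^{(j)}\rangle|$ can be obtained by Cauchy--Schwarz in $k$ (each factor bounded via Parseval) followed by Cauchy--Schwarz in $(i,j)$ using that $v, w \in \ell^2$. When specialized to the diffusion maps, the trace class hypothesis on the $A_\alpha$ supplies the same $\mu \otimes \mu$-a.e.\ summability that underwrote the Fubini step \eqref{eqn: suff condition to switch} in the proof of Theorem \ref{thm: simplified generalized diffusion distance}, so no new ingredient beyond Assumption \ref{assumption 1} is required.
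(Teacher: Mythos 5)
Your proposal is correct and follows essentially the same route as the paper: the paper's argument (given in the discussion preceding the theorem) also establishes norm preservation of the change-of-basis maps via the isometry $S_\gamma$ (equivalently, Parseval in the $\{\psi_\gamma^{(k)}\}$ basis), derives the mixed inner product identity $\langle O_{\alpha \rightarrow \gamma} v, O_{\beta \rightarrow \gamma} w \rangle_{\ell^2} = \sum_{i,j} v[i]\, w[j]\, \langle \psi_{\alpha}^{(i)}, \psi_{\beta}^{(j)} \rangle_{L^2(X,\mu)}$, and then matches the expanded square with \eqref{eqn: generalized diff maps formula} from Theorem \ref{thm: simplified generalized diffusion distance}, with the convergence statement inherited from that theorem. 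Your added attention to the summation interchange (handled, as you note, by the trace-class hypothesis exactly as in the Fubini step \eqref{eqn: suff condition to switch}) is a harmless refinement rather than a different method.
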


\begin{remark} \label{rem: gamma choice}
The choice of the fixed parameter $\gamma \in \I$ is important in the
sense that the evolution of the intrinsic geometry of $X$ will be
viewed through the lens of the important features (i.e., the dominant
eigenfunctions) of $X$ at parameter $\gamma$. In particular, when
approximating the diffusion distance by a small number of dominant
eigenfunctions, one must be careful to select enough eigenfunctions at
the $\gamma$ parameter to sufficiently characterize the geometry of
the data across all of $\I$. 
\end{remark}

\subsection{Historical graph} \label{sec: historical graph}

As discussed in Remark \ref{rem: universal metric}, if one has a
universal metric $d: (X \times \I) \times (X \times \I) \rightarrow
\R$, then one can use the original diffusion maps framework to define
a single embedding for all of $X \times \I$. This embedding will be
derived from a graph on all of $X \times \I$, in which links between
any two points $x_{\alpha}$ and $y_{\beta}$ are possible. For this
reason, we think of this type of graph as a historical graph, as each
point is embedded according to its relationship with the data across
the entire parameter space (or all of time, if that is what $\I$ is). 

The diffusion distance $D^{(t)}(x_{\alpha},y_{\beta})$ defines a
measure of similarity between $x_{\alpha}$ and $y_{\beta}$ by
comparing the local neighborhoods of each point in their respective
graphs $\Gamma_{\alpha}$ and $\Gamma_{\beta}$. The comparison is, by
definition, indirect. In the case when
no universal metric exists, though, it is possible to use the
diffusion distance to create a historical graph in which every point
throughout $X \times \I$ is compared directly. 

Suppose, for example, that $\I \subset \R$ and that $\rho$ is a
measure for $\I$. Assume that $\rho (\I) < \infty$, $\mu (X) <
\infty$, $0 < C_1 \leq k_{\alpha}(x,y) \leq C_2 < \infty$ for all
$x,y \in X$, $\alpha \in \I$, and that the function $(x,y,\alpha)
\mapsto k_{\alpha}(x,y)$ is a measurable function from $(X \times X
\times \I, \mu \otimes \mu \otimes \rho)$ to $\R$. Then for each $t
\in \N$, one can define 
a kernel $\overline{k}_t: (X \times \I) \times (X \times \I)
\rightarrow \R$ as
\begin{equation*}
\overline{k}_t(x_{\alpha},y_{\beta}) \triangleq
e^{-D^{(t)}(x_{\alpha},y_{\beta})/\varepsilon}, \quad \text{for all }
(x_{\alpha},y_{\beta}) \in (X \times \I) \times (X \times \I),
\end{equation*}
where $\varepsilon > 0$ is a fixed scaling parameter. The kernel
$\overline{k}_t$ is a direct measure of similarity across $X$ and the
parameter space $\I$. Thus, when $\I$ is time, we think of $(X \times
\I, \overline{k}_t)$ as defining a historical graph in which all
points throughout history are related to one another. By our
assumptions, it is not hard to see that $0 < C_1(t) \leq
k_t(x_{\alpha},y_{\beta}) \leq C_2(t) < \infty$ for all
$x_{\alpha},y_{\beta} \in X \times \I$. Therefore we can define the
density $\overline{m}_t: X \times \I \rightarrow \R$,
\begin{equation*}
\overline{m}_t(x_{\alpha}) \triangleq \int\limits_{\I} \int\limits_X
\overline{k}_t(x_{\alpha},y_{\beta}) \, d\mu(y) \, d\rho(\beta), \quad
\text{for all } x_{\alpha} \in X \times \I,
\end{equation*}
as well as the normalized kernel $\overline{a}_t: (X \times \I) \times
(X \times \I) \rightarrow \R$,
\begin{equation*}
\overline{a}_t(x_{\alpha},y_{\beta}) \triangleq
\frac{\overline{k}_t(x_{\alpha},y_{\beta})}{\sqrt{\overline{m}_t(x_{\alpha})}
  \sqrt{\overline{m}_t(y_{\beta})}}, \quad \text{for all }
(x_{\alpha},y_{\beta}) \in (X \times \I) \times (X \times \I).
\end{equation*}
Once again using the given assumptions, one can conclude that
$\overline{a}_t \in L^2(X \times \I \times X \times \I, \mu
\otimes \rho \otimes \mu \otimes \rho)$. Thus it defines a Hilbert-Schmidt integral
operator $\overline{A}_t: L^2(X \times \I, \mu \otimes \rho)
\rightarrow L^2(X \times \I, \mu \otimes \rho)$,
\begin{equation*}
(\overline{A}_tf)(x_{\alpha}) \triangleq \int\limits_{\I}
\int\limits_X \overline{a}_t(x_{\alpha},y_{\beta}) f(y_{\beta}) \,
d\mu(y) \, d\rho(\beta), \quad \text{for all } f \in L^2(X \times \I,
\mu \otimes \rho).
\end{equation*}
Let $\{ \overline{\psi}_t^{(i)} \}_{i \geq 1}$ and $\{
\overline{\lambda}_t^{(i)} \}_{i \geq 1}$ denote the eigenfunctions
and eigenvalues of $\overline{A}_t$, respectively. The corresponding
diffusion map $\overline{\Psi}_t^{(s)}: (X \times \I) \rightarrow
\ell^2$ is given by:
\begin{equation*}
\overline{\Psi}_t^{(s)}(x_{\alpha}) \triangleq \left(
  \left(\overline{\lambda}_t^{(i)}\right)^s
  \overline{\psi}_t^{(i)}(x_{\alpha}) \right)_{i \geq 1}, \quad
\text{for all } x_{\alpha} \in X \times \I.
\end{equation*}

In the case when $\I$ is time, this diffusion map embeds the entire history of $X$ across
all of $\I$ into a single low dimensional space. Unlike the common
embedding defined by Theorem \ref{thm: common embedding}, each point
$x_{\alpha}$ is embedded in relation to the entire history of $X$, not just its
relationship to other points $y_{\alpha}$ from the same time. As such, for each $x \in
X$, one can view the trajectory of $x$ through time as it relates to
all of history, i.e., one can view:
\begin{align*}
&T_x: \I \rightarrow \ell^2 \\
&T_x(\alpha) \triangleq \overline{\Psi}_t^{(s)}(x_{\alpha}).
\end{align*}
In turn, the trajectories $\{ T_x \}_{x \in X}$ can be used to define
a measure of similarity between the data points in $X$ that takes into
account the history of each point.

\begin{remark}
It is also possible to define $\overline{k}_t$ in terms of the 
inner products of the symmetric diffusion kernels, i.e.,
\begin{equation*}
\overline{k}_t(x_{\alpha},y_{\beta}) \triangleq \int\limits_X
a_{\alpha}^{(t)}(x,u) \, a_{\beta}^{(t)}(y,u) \, d\mu(u).
\end{equation*}
\end{remark}

\begin{remark}
The diffusion distance and corresponding analysis contained in Section
\ref{sec: graph distance} can be extended to the more general case in which one has a
sequence of data sets $\{X_{\alpha}\}_{\alpha \in   \I}$ for which
there does not exist a bijective correspondence between each pair. If
there is a sufficiently large set $S$ such that $S \subset X_{\alpha}$
for each $\alpha \in \I$, then one can compute a diffusion distance
from any $x_{\alpha} \in X_{\alpha}$ to any $y_{\beta} \in X_{\beta}$ through
the common set $S$. See \ref{sec: non-bijective correspondence} for
more details.
\end{remark}

\section{Global diffusion distance} \label{sec: two
  graph distances}

Now that we have developed a diffusion distance between pairs of data points
from $(X \times \I) \times (X \times \I)$, it is possible to define a
global diffusion distance between $\Gamma_{\alpha}$ and
$\Gamma_{\beta}$. The aim here is to define a diffusion distance that gives a
global measure of the change in $X$ from $\alpha$ to 
$\beta$. In turn, when applied over the whole parameter space,
one can organize the global behavior of the data as it changes over
$\I$. For each diffusion time $t \in \N$, let $\Dt: \G \times \G
\rightarrow \R$ be this global diffusion distance, where 
\begin{align*}
\Dt(\Gamma_{\alpha}, \Gamma_{\beta})^2 & \triangleq \left\| A_{\alpha}^t -
  A_{\beta}^t \right\|_{HS}^2 \\
&= \left\| a_{\alpha}^{(t)} - a_{\beta}^{(t)} \right\|_{L^2(X \times
  X,\mu \otimes \mu)}^2 \\
&= \iint\limits_{X \times X} \left(a_{\alpha}^{(t)}(x,y) -
  a_{\beta}^{(t)}(x,y) \right)^2 \,d\mu(x) \, d\mu(y).
\end{align*}

In fact, since $\mu$ is a $\sigma$-finite measure, the global
diffusion distance can be written in terms of the pointwise
diffusion distance by applying Tonelli's Theorem:
\begin{equation*}
\Dt(\Gamma_{\alpha}, \Gamma_{\beta})^2 = \int\limits_X
D^{(t)}(x_{\alpha}, x_{\beta})^2 \, d\mu (x).
\end{equation*}
Thus the global diffusion distance measures the similarity
between $\Gamma_{\alpha}$ and $\Gamma_{\beta}$ by comparing the
behavior of each of the corresponding diffusions on each of the graphs. Therefore,
the global diffusion distance will be small if $\Gamma_{\alpha}$
and $\Gamma_{\beta}$ have similar geometry, and large if their
geometry is significantly different.

As with the pointwise diffusion distance $D^{(t)}$, the global diffusion
distance can be written in a simplified form in terms of the spectral
decompositions of the operators $A_{\alpha}$ and $A_{\beta}$. 

\begin{theorem} \label{thm: simplified global diff dist}
Let $(X,\mu)$ be a measure space and $\{k_{\alpha} \}_{\alpha \in \I}$ a
family of kernels defined on $X$. If $(X,\mu)$ and
$\{k_{\alpha}\}_{\alpha \in \I}$ satisfy the properties of Assumption
\ref{assumption 1}, then the global diffusion distance at time $t$
between $\Gamma_{\alpha}$ and $\Gamma_{\beta}$ can be written as:
\begin{equation} \label{eqn: global simple 2}
\Dt(\Gamma_{\alpha}, \Gamma_{\beta})^2 = \sum_{i,j \geq 1} \left(
  \left(\lambda_{\alpha}^{(i)}\right)^t - \left(\lambda_{\beta}^{(j)}\right)^t
\right)^2 \langle \psi_{\alpha}^{(i)}, \psi_{\beta}^{(j)} \rangle_{L^2(X, \mu)}^2.
\end{equation}
\end{theorem}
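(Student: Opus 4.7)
The plan is to compute $\Dt(\Gamma_\alpha,\Gamma_\beta)^2 = \|A_\alpha^t - A_\beta^t\|_{HS}^2$ directly from the definition of the Hilbert--Schmidt norm, choosing $\{\psi_\beta^{(j)}\}_{j \geq 1}$ as the orthonormal basis of $L^2(X,\mu)$ over which the sum is taken. Note that since $A_\alpha$ and $A_\beta$ are trace class by Assumption~\ref{assumption 1}, each is Hilbert--Schmidt, so $A_\alpha^t - A_\beta^t$ is Hilbert--Schmidt as well, making the expression well defined.

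First I would write
\begin{equation*}
\|A_\alpha^t - A_\beta^t\|_{HS}^2 = \sum_{j \geq 1} \left\|(A_\alpha^t - A_\beta^t)\psi_\beta^{(j)}\right\|_{L^2(X,\mu)}^2,
\end{equation*}
and then expand each term via Parseval's identity applied to the orthonormal basis $\{\psi_\alpha^{(i)}\}_{i \geq 1}$:
\begin{equation*}
\left\|(A_\alpha^t - A_\beta^t)\psi_\beta^{(j)}\right\|_{L^2(X,\mu)}^2 = \sum_{i \geq 1} \left\langle (A_\alpha^t - A_\beta^t)\psi_\beta^{(j)}, \psi_\alpha^{(i)}\right\rangle_{L^2(X,\mu)}^2.
\end{equation*}
This reduces the problem to computing the individual inner products.

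The key step is to evaluate each inner product using self-adjointness. Since $A_\alpha$ is self-adjoint, so is $A_\alpha^t$, and therefore
\begin{equation*}
\left\langle A_\alpha^t \psi_\beta^{(j)}, \psi_\alpha^{(i)} \right\rangle_{L^2(X,\mu)} = \left\langle \psi_\beta^{(j)}, A_\alpha^t \psi_\alpha^{(i)} \right\rangle_{L^2(X,\mu)} = \left(\lambda_\alpha^{(i)}\right)^t \langle \psi_\alpha^{(i)}, \psi_\beta^{(j)} \rangle_{L^2(X,\mu)},
\end{equation*}
while the $A_\beta^t$ term yields $(\lambda_\beta^{(j)})^t \langle \psi_\alpha^{(i)}, \psi_\beta^{(j)} \rangle_{L^2(X,\mu)}$ directly since $\psi_\beta^{(j)}$ is an eigenfunction of $A_\beta$. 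Subtracting and squaring gives
\begin{equation*}
\left\langle (A_\alpha^t - A_\beta^t)\psi_\beta^{(j)}, \psi_\alpha^{(i)} \right\rangle_{L^2(X,\mu)}^2 = \left(\left(\lambda_\alpha^{(i)}\right)^t - \left(\lambda_\beta^{(j)}\right)^t\right)^2 \langle \psi_\alpha^{(i)}, \psi_\beta^{(j)} \rangle_{L^2(X,\mu)}^2,
\end{equation*}
and summing over $i,j$ produces the claimed formula \eqref{eqn: global simple 2}.

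There is no real obstacle here: everything follows from self-adjointness and Parseval's identity applied twice. The only subtle point worth verifying is that the iterated sum equals $\|A_\alpha^t - A_\beta^t\|_{HS}^2$, which follows because all terms are nonnegative so Tonelli's theorem allows the rearrangement $\sum_j \sum_i = \sum_{i,j}$. The identification $\Dt(\Gamma_\alpha,\Gamma_\beta)^2 = \|A_\alpha^t - A_\beta^t\|_{HS}^2$ itself is immediate from the fact that $a_\alpha^{(t)} - a_\beta^{(t)}$ is the integral kernel of $A_\alpha^t - A_\beta^t$ and the Hilbert--Schmidt norm of an integral operator equals the $L^2$ norm of its kernel.
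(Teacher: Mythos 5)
Your proof is correct, but it takes a genuinely different route from the paper. You compute $\big\| A_{\alpha}^t - A_{\beta}^t \big\|_{HS}^2$ directly at the operator level: expand the Hilbert--Schmidt norm over the orthonormal basis $\{\psi_{\beta}^{(j)}\}_{j \geq 1}$, apply Parseval in the basis $\{\psi_{\alpha}^{(i)}\}_{i \geq 1}$, and evaluate each matrix element $\langle (A_{\alpha}^t - A_{\beta}^t)\psi_{\beta}^{(j)}, \psi_{\alpha}^{(i)} \rangle$ using self-adjointness and the eigenfunction relations, so that the only convergence issue is a rearrangement of a doubly indexed series of nonnegative terms. The paper instead writes $\Dt(\Gamma_{\alpha},\Gamma_{\beta})^2 = \int_X D^{(t)}(x_{\alpha},x_{\beta})^2 \, d\mu(x)$ via Tonelli, substitutes the pointwise formula of Theorem \ref{thm: simplified generalized diffusion distance}, justifies exchanging the resulting sums and integrals using H\"{o}lder, Cauchy--Schwarz, and the trace class hypothesis, and then collapses the three-term expression \eqref{eqn: global diffusion final formula} into \eqref{eqn: global simple 2} using completeness of both eigenbases. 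Your argument is shorter and sidesteps the kernel-level expansions and interchange-of-limits bookkeeping entirely (it uses the trace class assumption only through the fact that the operators are Hilbert--Schmidt and that the eigenfunctions form orthonormal bases, which the paper establishes before the theorem); the paper's route, on the other hand, exhibits the conceptually useful identity relating the global distance to the integral of the pointwise diffusion distance and recycles the earlier theorem, which is why it is organized that way. Both yield the same formula, and your final remark identifying $\Dt(\Gamma_{\alpha},\Gamma_{\beta})^2$ with the squared Hilbert--Schmidt norm is in fact how the paper defines the global distance, so nothing is missing there.
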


Equation \eqref{eqn: global simple 2} gives a new way to interpret the global diffusion graph
distance. The orthonormal basis  $\{ \psi_{\alpha}^{(i)} \}_{i \geq
  1}$ is a set of diffusion coordinates for $\Gamma_{\alpha}$, while
the orthonormal basis $\{\psi_{\beta}^{(j)} \}_{j \geq 1}$ is a set of
diffusion coordinates for $\Gamma_{\beta}$. Interpreting the  summands
of \eqref{eqn: global simple 2} in this context, we see that the global diffusion
distance measures the similarity of $\Gamma_{\alpha}$ and
$\Gamma_{\beta}$ by taking a weighted rotation of one coordinate
system into the other.

\begin{proof}[Proof of Theorem \ref{thm: simplified global diff dist}]
Since 
\begin{equation*}
\Dt(\Gamma_{\alpha}, \Gamma_{\beta})^2 = \int\limits_X
D_t(x_{\alpha}, x_{\beta})^2 \, d\mu(x),
\end{equation*}
we can build upon Theorem \ref{thm: simplified generalized diffusion
  distance}. In particular, we have
\begin{align*}
\Dt(\Gamma_{\alpha}, \Gamma_{\beta})^2 = \int\limits_X  &\left(\sum_{i \geq 1}
\left(\lambda_{\alpha}^{(i)}\right)^{2t} \psi_{\alpha}^{(i)}(x)^2 + \sum_{j \geq 1}
\left(\lambda_{\beta}^{(j)}\right)^{2t} \psi_{\beta}^{(j)}(x)^2 \right.\\
&\left.- 2 \sum_{i,j \geq 1} \left(\lambda_{\alpha}^{(i)}\right)^t
\left(\lambda_{\beta}^{(j)}\right)^t \psi_{\alpha}^{(i)} (x) \,
\psi_{\beta}^{(j)}(x) \, \langle \psi_{\alpha}^{(i)}, \psi_{\beta}^{(j)}
\rangle_{L^2(X, \mu)}\right) \, d\mu(x).
\end{align*}
As in the proof of Theorem \ref{thm: simplified generalized diffusion
  distance} we have three terms that we shall evaluate
separately. Focusing on the cross terms as before, we would like to
switch the integral and the summation; this time we need to show
\begin{equation} \label{eqn: suff cond to switch 2}
\sum_{i,j \geq 1} \int\limits_X \left| \left(\lambda_{\alpha}^{(i)}\right)^t
\left(\lambda_{\beta}^{(j)}\right)^t \psi_{\alpha}^{(i)} (x) \,
\psi_{\beta}^{(j)} (x) \, \langle \psi_{\alpha}^{(i)}, \psi_{\beta}^{(j)}
\rangle_{L^2(X, \mu)}\right| \, d\mu (x) < \infty.
\end{equation}
One can show \eqref{eqn: suff cond to switch 2} by using H\"{o}lder's
Theorem, the Cauchy-Schwarz inquality, and the assumption that
$A_{\alpha}$ is a trace class operator for each $\alpha \in \I$. Therefore we can
switch the integral and the summation, which gives:
\begin{equation}
\int\limits_X \sum_{i,j \geq 1} \left(\lambda_{\alpha}^{(i)}\right)^t
\left(\lambda_{\beta}^{(j)}\right)^t \psi_{\alpha}^{(i)}(x) \,
\psi_{\beta}^{(j)}(x) \, \langle \psi_{\alpha}^{(i)}, \psi_{\beta}^{(j)}
\rangle_{L^2(X, \mu)}\, d\mu (x) = \sum_{i,j \geq 1} \left(\lambda_{\alpha}^{(i)}\right)^t
\left(\lambda_{\beta}^{(j)}\right)^t \langle
\psi_{\alpha}^{(i)}, \psi_{\beta}^{(j)} \rangle_{L^2(X,
  \mu)}^2. \label{eqn: simplified cross terms} 
\end{equation}
A similar calculation also shows that for each $\alpha \in \I$,
\begin{equation}
\int\limits_X \sum_{i \geq 1} \left(\lambda_{\alpha}^{(i)}\right)^{2t}
\psi_{\alpha}^{(i)}(x)^2 \, d\mu(x) = \sum_{i \geq 1}
\left(\lambda_{\alpha}^{(i)}\right)^{2t}. \label{eqn: simplified L2
  qt}
\end{equation}
Putting \eqref{eqn: simplified cross terms} and \eqref{eqn: simplified
  L2 qt} together, we arrive at:
\begin{equation} \label{eqn: global diffusion final formula}
\Dt(\Gamma_{\alpha}, \Gamma_{\beta})^2 = \sum_{i \geq 1} \left(\lambda_{\alpha}^{(i)}\right)^{2t} + \sum_{j
  \geq 1} \left(\lambda_{\beta}^{(j)}\right)^{2t} - 2\sum_{i,j \geq 1} \left(\lambda_{\alpha}^{(i)}\right)^t \left(\lambda_{\beta}^{(j)}\right)^t \langle
\psi_{\alpha}^{(i)}, \psi_{\beta}^{(j)} \rangle_{L^2(X, \mu)}^2.
\end{equation}
Furthermore, recall that we have taken
$\{\psi_{\alpha}^{(i)}\}_{i \geq 1}$ and $\{ \psi_{\beta}^{(j)} \}_{j
  \geq 1}$ to be orthonormal bases for $L^2(X, \mu)$. In particular,
\begin{equation*}
\sum_{i \geq 1} \langle \psi_{\alpha}^{(i)}, \psi_{\beta}^{(j_0)} \rangle^2 = \sum_{j \geq
  1} \langle \psi_{\alpha}^{(i_0)}, \psi_{\beta}^{(j)} \rangle^2 = 1, \quad \text{for all
} i_0, j_0 \geq 1.
\end{equation*}
Therefore we can simplify \eqref{eqn: global diffusion final formula} to
\begin{equation*}
\Dt(\Gamma_{\alpha}, \Gamma_{\beta})^2 = \sum_{i,j \geq 1} \left(
  \left(\lambda_{\alpha}^{(i)}\right)^t -
  \left(\lambda_{\beta}^{(j)}\right)^t \right)^2 \langle
\psi_{\alpha}^{(i)}, \psi_{\beta}^{(j)} \rangle_{L^2(X, \mu)}^2.  
\end{equation*}
\end{proof}

\begin{remark}
As with the pointwise diffusion distance, the asymptotic behavior of
the global diffusion distance when $\G$ is a family of connected graphs is both interesting and easy to
characterize. Under the same connectivity assumptions as Remark
\ref{rem: asymptotic diff dist}, one can use \eqref{eqn: global
  diffusion final formula} to show that
\begin{equation*}
\lim_{t \rightarrow \infty} \Dt(\Gamma_{\alpha},\Gamma_{\beta})^2 =
2\left( 1-\left\langle \psi_{\alpha}^{(1)}, \psi_{\beta}^{(1)}
  \right\rangle_{L^2(X,\mu)}^2 \right).
\end{equation*}
\end{remark}

\section{Random sampling theorems} \label{sec: convergence
  of finite approximations}

In applications, the given data is finite and often times sampled from
some continuous data set $X$. In this section we examine the behavior
of the pointwise and global diffusion distances when applied to a
randomly sampled, finite collection of samples taken from $X$. 

\subsection{Updated assumptions}

In order to frame this discussion in the appropriate
setting, we update our assumptions on the measure space $(X, 
\mu)$ and the kernels $\{ k_{\alpha} \}_{\alpha \in \I}$. The results
from this section will rely heavily upon the work contained in
\cite{devito:learningExamples2005, rosasco:learningIntegralOps2010},
and so we follow their lead. First, for any $l \in \N$, let
$C_b^l(X)$ denote the set of continuous bounded functions on $X$ such
that all derivatives of order $l$ exist and are themselves continuous,
bounded functions.

\begin{assumption} \label{assumption 1.5}
We assume the following properties:

\begin{enumerate}
\item
The measure $\mu$ is a probability measure, so that $\mu(X) = 1$.
\item
$X$ is a bounded open subset of $\R^d$ that satisfies the cone
condition (see page $93$ of \cite{burenkov:sobolevSpacesDomains1998}).  
\item
For each $\alpha \in \I$, the kernel $k_{\alpha}$ is symmetric,
positive definite, and bounded from above and below, so that
\begin{equation*}
0 < C_1(\alpha) \leq k_{\alpha}(x,y) \leq C_2(\alpha) < \infty.
\end{equation*}
\item
For each $\alpha \in \I$, $k_{\alpha} \in C_b^{d+1}(X \times X)$.
\end{enumerate}
\end{assumption}

Note that every property from Assumption \ref{assumption 1} is either
contained in or can be derived from the properties in Assumption
\ref{assumption 1.5}. Therefore the results of the previous sections
still apply under these new assumptions.

The first assumption that $\mu$ be a probability measure is needed
since we will be randomly sampling points from $X$. The probability
measure from which we sample is $\mu$. The second and fourth
assumptions are necessary to apply certain Sobolev embedding theorems
which are integral to constructing a reproducing kernel Hilbert space
that contains the family of kernels $\{a_{\alpha}\}_{\alpha \in \I}$
and their empirical equivalents. More details can be found in
\ref{sec: proof of finite graph approximation estimate}.

\subsection{Sampling and finite graphs} \label{sec: sampling and
  finite graphs}

Consider the space $X$ and suppose that $X_n \triangleq \{x^{(1)}, \ldots, x^{(n)}\}
\subset X$ are sampled i.i.d. according to $\mu$. We are going to discretize the
framework we have developed to accommodate the samples $X_n$. Let
$\Gamma_{\alpha, n} \triangleq (X_n, k_{\alpha}|_{X_n})$ be the finite graph
with vertices $X_n$ and weighted edges given by
$k_{\alpha}|_{X_n}$. We now define the finite, matrix equivalents to
the continuous operators from Section \ref{sec: defining the diffusion
  distance on Xtau}. To start, first define for
each $\alpha \in \I$ the $n \times n$ matrices $\K_{\alpha}$ as:
\begin{equation*}
\K_{\alpha}[i,j] \triangleq \frac{1}{n} \, k_{\alpha}(x^{(i)}, x^{(j)}),
\quad \text{for all } i,j = 1, \ldots, n.
\end{equation*}
We also define the corresponding diagonal degree matrices $\D_{\alpha}$
as:
\begin{equation*}
\D_{\alpha}[i,i] \triangleq \frac{1}{n}\sum_{j=1}^n
k_{\alpha}(x^{(i)},x^{(j)}) = \sum_{j=1}^n \K_{\alpha}[i,j], \quad
\text{for all } i = 1, \ldots, n.
\end{equation*}
Finally, the discrete analog of the operator $A_{\alpha}$ is given by
the matrix $\A_{\alpha}$, which is defined as
\begin{equation*}
\A_{\alpha} \triangleq \D_{\alpha}^{-\frac{1}{2}} \K_{\alpha}
\D_{\alpha}^{-\frac{1}{2}}, \quad \text{for all } \alpha \in \I.
\end{equation*}

We can now define the pointwise and global diffusion distances for the
finite graphs $\G_n \triangleq \{\Gamma_{\alpha,n}\}_{\alpha \in \I}$ in terms of the
matrices $\{\A_{\alpha}\}_{\alpha \in \I}$. Set $x_{\alpha}^{(i)}
\triangleq (x^{(i)}, \alpha) \in X_n \times \I$, and let $D_n^{(t)}: (X_n
\times \I) \times (X_n \times \I) \rightarrow \R$ denote the empirical version of the
pointwise diffusion distance. We define it as:
\begin{align*}
D_n^{(t)}(x_{\alpha}^{(i)},x_{\beta}^{(j)})^2 &\triangleq
n^2\left\|\A_{\alpha}^t[i,\cdot] -
  \A_{\beta}^t[j,\cdot]\right\|_{\R^n}^2 \\
&= n^2 \sum_{k=1}^n \left(\A_{\alpha}^t[i,k] - \A_{\beta}^t[j,k]\right)^2.
\end{align*}
Let $\Dt_n: \G_n \times \G_n \rightarrow \R$ denote the empirical global
diffusion distance, where
\begin{align*}
\Dt_n(\Gamma_{\alpha,n}, \Gamma_{\beta,n})^2 &\triangleq
\left\|\A_{\alpha}^t - \A_{\beta}^t\right\|_{HS} \\
&= \sum_{i,j=1}^n \left( \A_{\alpha}^t[i,j] - \A_{\beta}^t[i,j] \right)^2.
\end{align*}
We then have the following two theorems relating $D_n^{(t)}$ to $D^{(t)}$
and $\Dt_n$ to $\Dt$, respectively.

\begin{theorem} \label{thm: ptwise diff dist sample}
Suppose that $(X,\mu)$ and $\{k_{\alpha}\}_{\alpha \in \I}$ satisfy the
conditions of Assumption \ref{assumption 1.5}. Let $n \in \N$ and
sample $X_n = \{x^{(1)}, \ldots, x^{(n)}\} \subset X$ i.i.d. according to
$\mu$; also let $t \in \N$, $\tau > 0$, and $\alpha,\beta \in \I$. Then, with
probability $1-2e^{-\tau}$, 
\begin{equation*}
\left| D^{(t)}(x_{\alpha}^{(i)},x_{\beta}^{(j)}) -
  D_n^{(t)}(x_{\alpha}^{(i)},x_{\beta}^{(j)}) \right| \leq C(\alpha,\beta,d,t)
\frac{\sqrt{\tau}}{\sqrt{n}}, \quad \text{for all } i,j = 1, \ldots, n.
\end{equation*}
\end{theorem}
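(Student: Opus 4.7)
The plan is to follow the functional-analytic framework of \cite{rosasco:learningIntegralOps2010, devito:learningExamples2005}, which compares a continuous integral operator to its empirical sample matrix by lifting both into a common reproducing kernel Hilbert space (RKHS) $\HH$ and applying a concentration inequality for Hilbert-space-valued random variables. The Sobolev regularity $k_{\alpha} \in C_b^{d+1}(X \times X)$ combined with the cone condition on $X$ supplies, via Sobolev embedding, an RKHS $\HH$ whose reproducing kernel dominates pointwise evaluation and contains each section $a_{\alpha}(x, \cdot)$; this is the natural ambient space in which to compare $A_{\alpha}$ to $\A_{\alpha}$ (and likewise for $\beta$).

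First I would rewrite the empirical distance as
\[
D_n^{(t)}(x_{\alpha}^{(i)}, x_{\beta}^{(j)})^2 = \frac{1}{n}\sum_{k=1}^n \bigl( n\A_{\alpha}^t[i,k] - n\A_{\beta}^t[j,k] \bigr)^2,
\]
noting that $n\A_{\alpha}^t[i,j]$ is the natural empirical analog of $a_{\alpha}^{(t)}(x^{(i)}, x^{(j)})$, so the right-hand side is a Monte Carlo estimate of
\[
D^{(t)}(x_{\alpha}^{(i)}, x_{\beta}^{(j)})^2 = \int\limits_X \bigl(a_{\alpha}^{(t)}(x^{(i)}, u) - a_{\beta}^{(t)}(x^{(j)}, u)\bigr)^2 \, d\mu(u).
\]
Inside $\HH$, let $\tilde A_{\alpha}$ and $\hat A_{\alpha,n}$ denote the lifts of the continuous operator and the empirical matrix, respectively. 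Applying a Hoeffding--Pinelis-type inequality to the i.i.d.\ Hilbert-space-valued summands defining $\hat A_{\alpha,n}$ yields, with probability at least $1 - 2e^{-\tau}$,
\[
\|\tilde A_{\alpha} - \hat A_{\alpha,n}\|_{HS(\HH)} \leq C_1(\alpha, d)\, \frac{\sqrt{\tau}}{\sqrt{n}},
\]
and the analogous inequality for $\beta$.

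To extend from $A_{\alpha}$ to $A_{\alpha}^t$, I would use the telescoping identity
\[
\tilde A_{\alpha}^t - \hat A_{\alpha,n}^t = \sum_{\ell=0}^{t-1} \tilde A_{\alpha}^{\ell}\,\bigl(\tilde A_{\alpha} - \hat A_{\alpha,n}\bigr)\,\hat A_{\alpha,n}^{t-1-\ell},
\]
together with the uniform operator-norm bound $\|\tilde A_{\alpha}\|, \|\hat A_{\alpha,n}\| \leq 1$, absorbing only a factor of $t$ into the constant. The reproducing property of $\HH$ then converts the resulting Hilbert--Schmidt bound into a pointwise bound
\[
\bigl\| a_{\alpha}^{(t)}(x^{(i)}, \cdot) - n\A_{\alpha}^t[i, \cdot] \bigr\|_{L^2(X, \mu)} \leq C_2(\alpha, d, t)\, \frac{\sqrt{\tau}}{\sqrt{n}},
\]
and the Monte Carlo discrepancy between the continuous $L^2(X, \mu)$ integral and its $n$-point Riemann-sum counterpart is handled by a standard Bernstein argument under the bounded-kernel assumption. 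The claim then follows from the reverse triangle inequality $\bigl|\,\|u\| - \|v\|\,\bigr| \leq \|u - v\|$ applied to the relevant $L^2$ norms, together with the triangle inequality over $\alpha$ and $\beta$.

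The main obstacle is choosing the RKHS $\HH$ so that both the continuous and empirical operators admit well-behaved lifts whose operator norms remain uniformly bounded by $1$ at every power, since this is what allows the telescoping step to contribute only a linear factor in $t$ rather than an exponential one. Tracking how the constants depend on $\alpha$, $\beta$, $d$, and $t$ (through the lower bound $C_1(\alpha)$ on $k_{\alpha}$ and the Sobolev embedding constants) also requires care, but once the lift is correctly set up, the remaining estimates are routine consequences of Hilbert-space concentration and the reproducing property.
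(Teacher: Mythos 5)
Your high-level framework --- lifting the continuous and empirical operators into the Sobolev RKHS $\HH^s$ and invoking Hilbert-space concentration --- is the same one the paper uses, but two of your key steps would fail as written. The first is the ``standard Bernstein argument'' for the Monte Carlo discrepancy between the $L^2(X,\mu)$ integral and the $n$-point sum. The summand there is not a fixed function evaluated at i.i.d.\ points: it depends on the whole sample through the empirical normalization $m_{\alpha,n}$ (hence through $a_{\alpha,n}$ and every entry of $\A_{\alpha}^t$) and through the evaluation points $x^{(i)},x^{(j)}$ themselves, so the independence Bernstein needs is absent; moreover the theorem asserts the bound simultaneously for all $i,j$, which a per-pair concentration bound would only give after a union bound with additional losses. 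The paper never estimates the quadrature error directly: it writes the empirical inner products as $\langle A_{\alpha,n}^{t-1}a_{\alpha,n}(x^{(i)},\cdot),\,T_n\,A_{\beta,n}^{t-1}a_{\beta,n}(x^{(j)},\cdot)\rangle_{\HH^s}$ and the continuous ones with $T_{\HH^s}$ in place of $T_n$, so all randomness is absorbed into the sample-independent events $\|T_{\HH^s}-T_n\|_{HS}\le C(d)\sqrt{\tau/n}$ (Theorem~\ref{thm: rbd7}) and $\|A_{\alpha,\HH^s}-A_{\alpha,n}\|_{HS}\le C(\alpha,d)\sqrt{\tau/n}$ (Theorem~\ref{thm: rbd15}), while the $(i,j)$-dependence enters only through $\HH^s$-norms of $a_{\alpha,n}(x^{(i)},\cdot)$ and $a_{\alpha}(x^{(i)},\cdot)$ that are bounded deterministically (Lemma~\ref{lem: fcn bounds}); Cauchy--Schwarz then yields a bound uniform in $(i,j)$ with no union bound. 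Some version of this routing through $T_n$ versus $T_{\HH^s}$, or an equivalent device, is needed in place of your Bernstein step.

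The second gap is in the concentration step itself: the ``i.i.d.\ Hilbert-space-valued summands defining $\hat A_{\alpha,n}$'' are not i.i.d., again because $a_{\alpha,n}$ is normalized by the empirical density $m_{\alpha,n}$, which couples all $n$ samples; making this rigorous requires first controlling $\|m_{\alpha}-m_{\alpha,n}\|$ (Lemma~\ref{lem: rbd18}) and then passing to the true normalization, which is exactly what Theorem~\ref{thm: rbd15} packages --- you may cite it, but your sketch does not prove it. Two smaller points: the claim $\|\tilde A_{\alpha}\|,\|\hat A_{\alpha,n}\|\le 1$ on $\HH^s$ is unjustified (the lifted operators are not self-adjoint with respect to the $\HH^s$ inner product, so one only gets bounds of the form $C(\alpha,d)$), but this is harmless since the constant may depend on $t$, so your telescoping survives with $C(\alpha,d)^t$ --- the obstacle you single out as the main one is actually a non-issue. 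Finally, the quantity $\|a_{\alpha}^{(t)}(x^{(i)},\cdot)-n\A_{\alpha}^t[i,\cdot]\|_{L^2(X,\mu)}$ does not parse as written, since the empirical row is defined only on the sample; one needs its out-of-sample extension (in the paper, $A_{\alpha,n}^{t-1}a_{\alpha,n}(x^{(i)},\cdot)\in\HH^s$), and on the continuous side the paper uses Mercer's theorem, the trace-class assumption, and the truncation functions $\varphi_{\alpha}^{(N,i)}$ to realize $a_{\alpha}^{(t)}(x^{(i)},\cdot)$ as $A_{\alpha}^t$ applied to an $L^2$ function before the factorization identities and the reproducing property can be brought to bear.
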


\begin{theorem} \label{thm: global diff dist sample}
Suppose that $(X,\mu)$ and $\{k_{\alpha}\}_{\alpha \in \I}$ satisfy the
conditions of Assumption \ref{assumption 1.5}. Let $n \in \N$ and
sample $X_n = \{x^{(1)}, \ldots, x^{(n)}\} \subset X$ i.i.d. according to
$\mu$; also let $t \in \N$, $\tau > 0$, and $\alpha,\beta \in \I$. Then, with
probability $1-2e^{-\tau}$, 
\begin{equation*}
\left| \Dt(\Gamma_{\alpha},\Gamma_{\beta}) -
  \Dt_n(\Gamma_{\alpha,n},\Gamma_{\beta,n}) \right| \leq
C(\alpha,\beta,d,t) \frac{\sqrt{\tau}}{\sqrt{n}}.
\end{equation*}
\end{theorem}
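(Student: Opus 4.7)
My plan mirrors the strategy that already underlies Theorem \ref{thm: ptwise diff dist sample}: lift the empirical $n\times n$ matrix $\A_\alpha$ to an operator $\widehat{A}_{\alpha,n}$ acting on the same Hilbert space as $A_\alpha$, and then reduce the Hilbert--Schmidt question to a concentration inequality for $A_\alpha-\widehat{A}_{\alpha,n}$. The lifting (constructed via the reproducing kernel Hilbert space machinery of \cite{devito:learningExamples2005,rosasco:learningIntegralOps2010}, whose hypotheses are guaranteed by the $C_b^{d+1}$ smoothness and cone condition in Assumption \ref{assumption 1.5}) is designed to be an isometry for the Hilbert--Schmidt norm, so that $\|\A_\alpha^t-\A_\beta^t\|_{HS}$ in the matrix sense coincides with $\|\widehat{A}_{\alpha,n}^t-\widehat{A}_{\beta,n}^t\|_{HS}$ at the operator level.

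Once the two pairs of operators live on a common space, the reverse triangle inequality in the Hilbert--Schmidt norm gives
\begin{equation*}
\bigl|\Dt(\Gamma_\alpha,\Gamma_\beta)-\Dt_n(\Gamma_{\alpha,n},\Gamma_{\beta,n})\bigr|
\;\leq\; \bigl\|A_\alpha^t-\widehat{A}_{\alpha,n}^t\bigr\|_{HS}+\bigl\|A_\beta^t-\widehat{A}_{\beta,n}^t\bigr\|_{HS}.
\end{equation*}
I would handle each right-hand term by the telescoping identity $U^t-V^t=\sum_{k=0}^{t-1}U^k(U-V)V^{t-1-k}$, combined with the spectral fact that both $A_\alpha$ and $\widehat{A}_{\alpha,n}$ are positive with operator norm $1$ (largest eigenvalue equal to $1$). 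This yields the linear-in-$t$ estimate
\begin{equation*}
\bigl\|A_\alpha^t-\widehat{A}_{\alpha,n}^t\bigr\|_{HS}\;\leq\; t\,\bigl\|A_\alpha-\widehat{A}_{\alpha,n}\bigr\|_{HS},
\end{equation*}
and analogously for $\beta$.

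The final step is a Bernstein-type concentration bound for Hilbert space valued random variables. The operator $\widehat{A}_{\alpha,n}-A_\alpha$ can be realized as an empirical average of $n$ i.i.d.\ zero-mean random operators, each bounded in Hilbert--Schmidt norm by a constant $C(\alpha,d)$ depending on $\|a_\alpha\|_{C_b^{d+1}}$ (this is precisely the Bennett/Pinelis estimate used in Section 5 of \cite{rosasco:learningIntegralOps2010}). It produces, with probability at least $1-e^{-\tau}$, the bound $\|A_\alpha-\widehat{A}_{\alpha,n}\|_{HS}\leq C(\alpha,d)\sqrt{\tau/n}$. Applying this once for $\alpha$ and once for $\beta$ (on the \emph{same} sample $X_n$), a union bound---absorbed by replacing $\tau$ with $\tau+\log 2$---combines the two deviation events to give the advertised probability $1-2e^{-\tau}$, and assembling the pieces produces the stated rate with a constant $C(\alpha,\beta,d,t)$ of order $t\bigl(C(\alpha,d)+C(\beta,d)\bigr)$.

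The genuinely hard part is not the probabilistic inequality, which is by now standard, but the HS-isometric lifting in the first paragraph: one must realise the finite matrix $\A_\alpha$ as an operator on $L^2(X,\mu)$ (or a Sobolev/RKHS proxy embedded into $L^2$) in such a way that (i) its nonzero spectrum coincides with that of $\A_\alpha$, (ii) its HS norm equals the Frobenius norm of $\A_\alpha$, and (iii) it can be written as an empirical integral operator driven by the sample $\{x^{(i)}\}$. Once this is set up, as was already done for Theorem \ref{thm: ptwise diff dist sample}, the present theorem essentially drops out, because the global diffusion distance is \emph{literally} an HS norm of an operator difference and therefore fits the output of the concentration argument without the extra evaluation-functional step needed in the pointwise case.
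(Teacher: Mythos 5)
Your proposal is correct and follows essentially the same route as the paper: lift both the continuous operators and the empirical matrices to the RKHS $\HH^s$ operators ($A_{\alpha,\HH^s}$ and $A_{\alpha,n}$) so that the global diffusion distances become Hilbert--Schmidt norms of differences on a common space, then apply the triangle inequality together with the concentration bound $\|A_{\alpha,\HH^s}-A_{\alpha,n}\|_{HS}\leq C(\alpha,d)\sqrt{\tau}/\sqrt{n}$ from \cite{devito:learningExamples2005,rosasco:learningIntegralOps2010}. Your explicit telescoping of $U^t-V^t$ and the union-bound bookkeeping are minor elaborations of steps the paper absorbs into the constant $C(\alpha,\beta,d,t)$, and the ``hard'' lifting you defer is exactly the construction (with the nonzero-spectrum transfer argument) already carried out in the paper's appendix.
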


\section{Applications} \label{sec: applications}

\subsection{Change detection in hyperspectral imagery
  data} \label{sec: change detection hsi data}

In this section we consider the problem of change detection in
hyperspectral imagery (HSI) data. Additionally, we use this particular
experiment to illustrate two important properties of the diffusion
distance. First, the representation of the data does not matter, even
if it is changing across the parameter space. Secondly, the diffusion
distance is robust to noise.

The main ideas are the following. A
hyperspectral image can be thought of as a data cube $\C$,
with dimensions $L \times W \times D$. The cube $\C$ corresponds to an
image whose pixel dimensions are $L \times W$. A hyperspectral camera
measures the reflectance of this image at $D$ different
wavelengths, giving one $D$ images, which, put together, give one the
cube $\C$. Thus we think of a hyperspectral image as a regular image,
but each pixel now has a spectral signature in $\R^D$.

The change detection problem is the following. Suppose you have one
scene for which you have several hyperspectral images taken at
different times. These images can be taken under different weather
conditions, lighting conditions, during different seasons of the year,
and even with different cameras. The goal is to determine what has
changed from one image to the next.

To test the diffusion distance in this setting, we used some of the data collected
in \cite{eismann:hsiChangeDetection2008}. Using a hyperspectral camera
that captured $124$ different wavelengths, the authors of
\cite{eismann:hsiChangeDetection2008} collected hyperspectral images
of a particular scene during August, September, October, and November
(one image for each month). In October, they also recorded a fifth
image in which they added two small tarp bundles so as to introduce small
changes into the scene as a means for testing change detection
algorithms. For our purposes, we selected a particular $100 \times 100 \times
124$ sub-cube across all five images that contains one of the
aforementioned introduced changes. Color images of the four months
plus the additional fifth image containing the tarp are
given in Figure \ref{fig: color images of four months}. In all five images one can
see in the foreground grass and in the background a tree line, with a
metal panel resting on the grass. In the additional fifth image, there is also a
small tarp sitting on the grass. The images were obviously taken
during different times of the year, ranging from Summer to Fall, and
it is also evident that the lighting is different from image to
image. One can see these changes in how the spectral signature of a
particular pixel changes from month to month; see Figure \ref{fig:
  original spectra} for an example of a grass pixel.

\begin{figure}
\center
\subfigure[August]{
\includegraphics[width=1.2in]{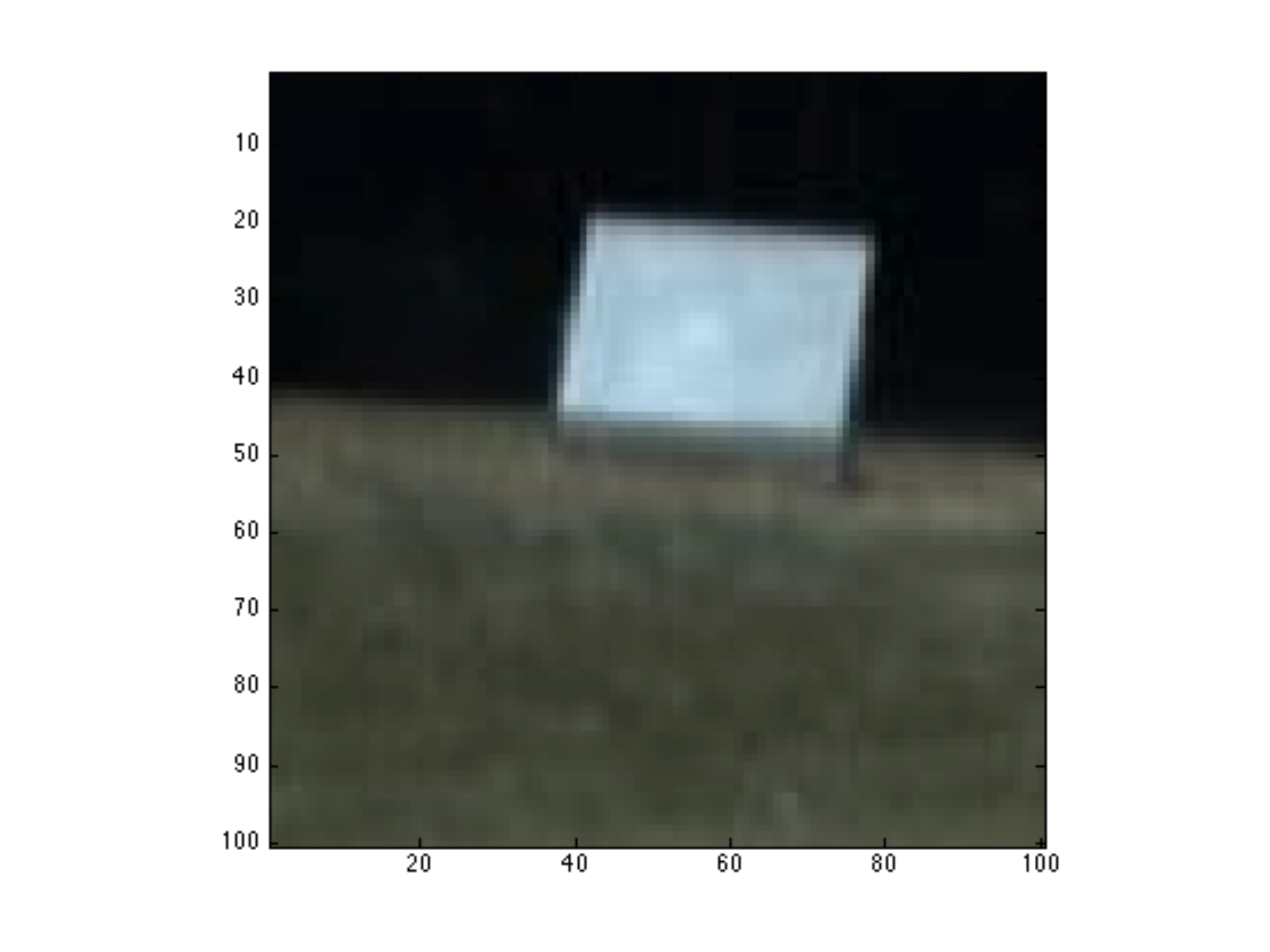}
}
\subfigure[September]{
\includegraphics[width=1.2in]{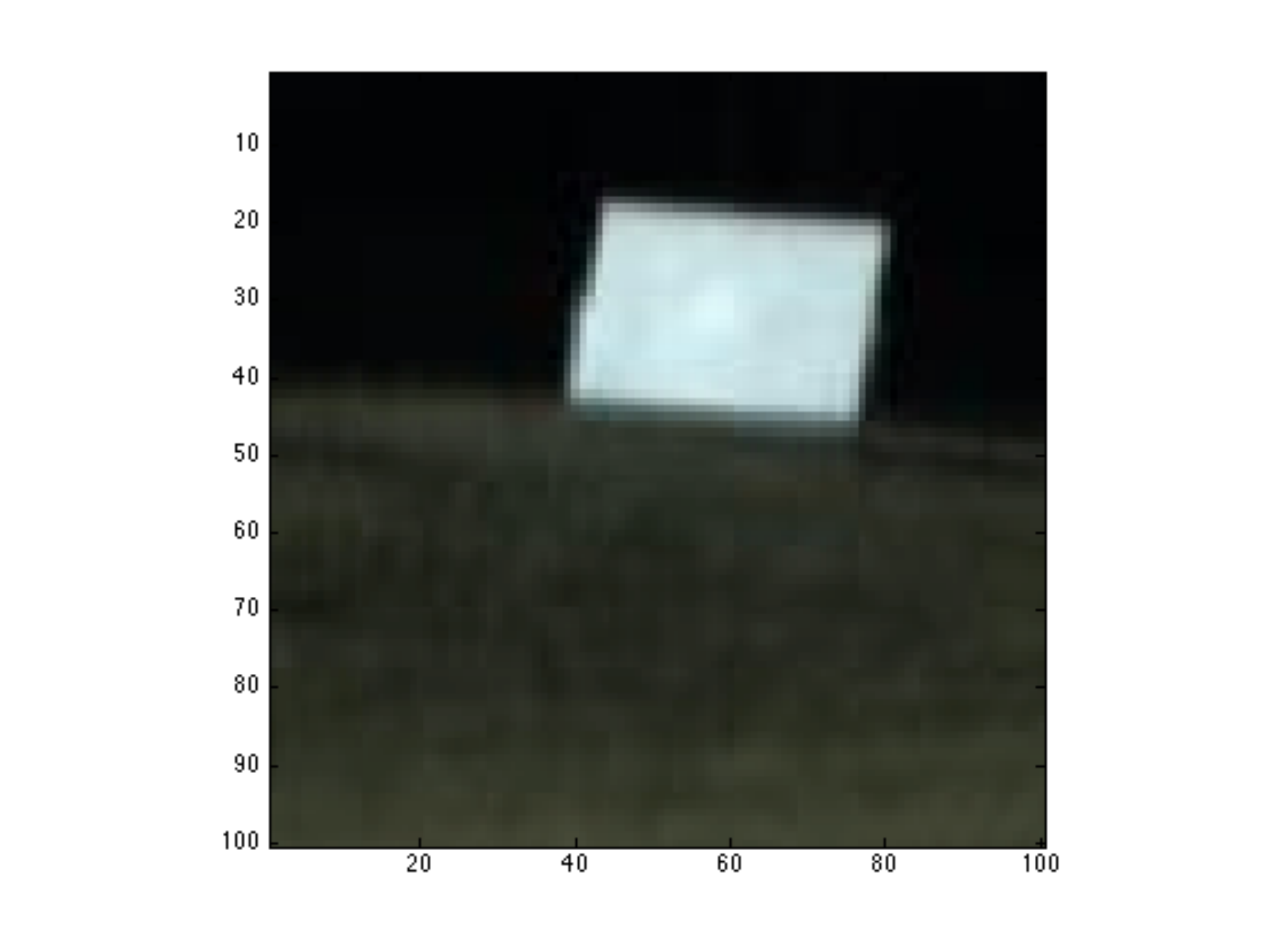}
}
\subfigure[October]{
\includegraphics[width=1.2in]{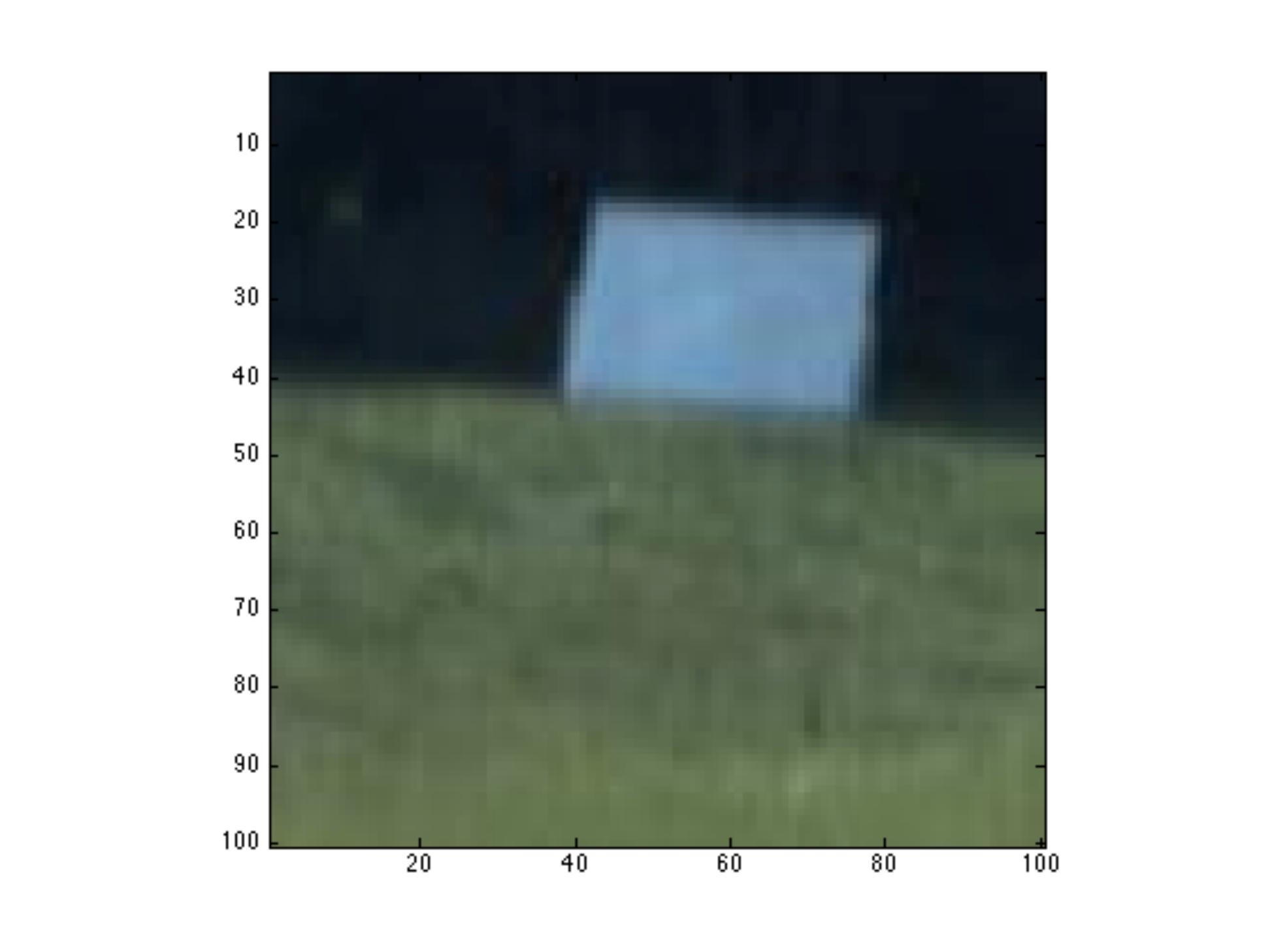}
}
\subfigure[November]{
\includegraphics[width=1.2in]{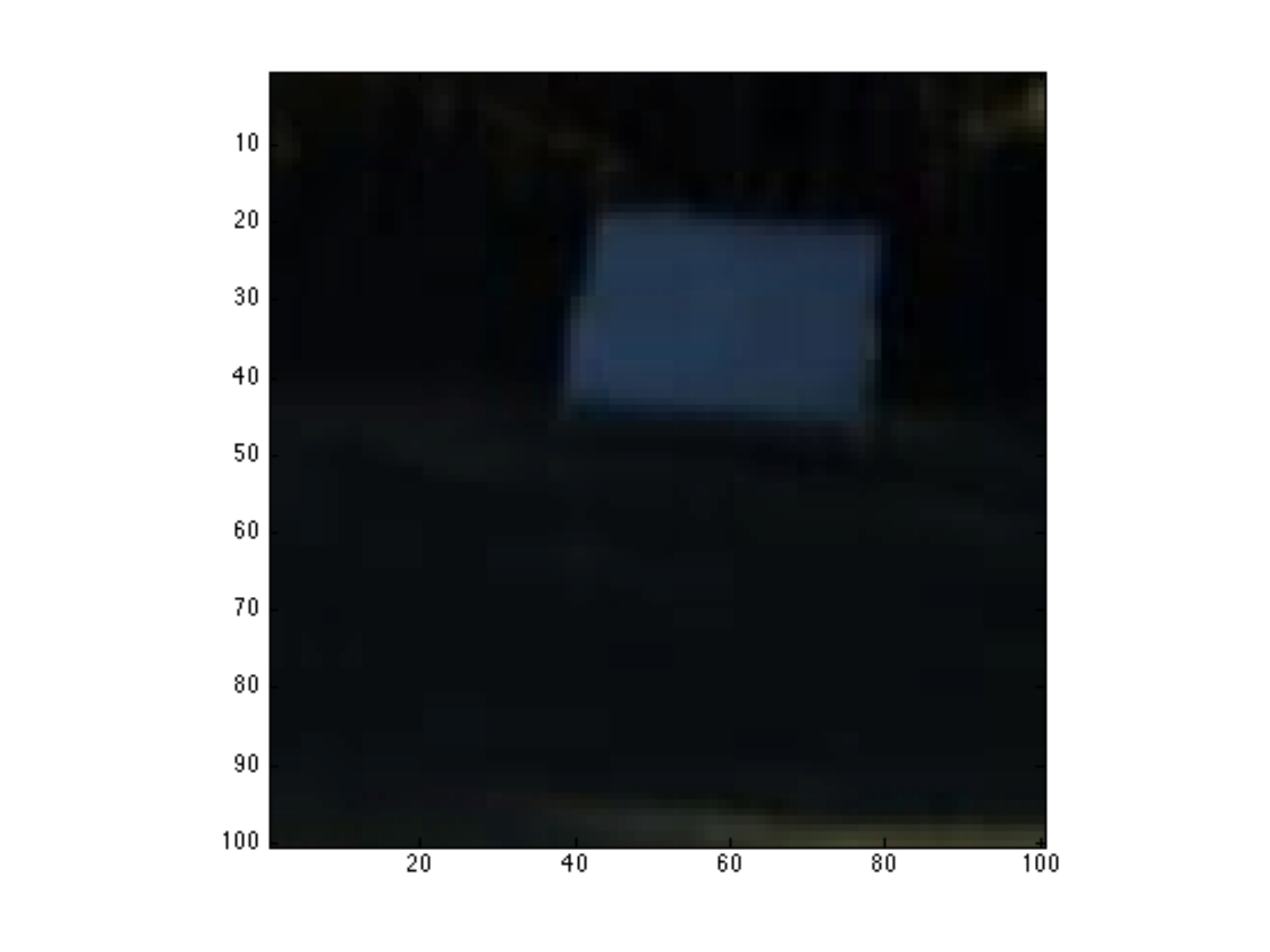}
}
\subfigure[October with tarp]{
\includegraphics[width=1.2in]{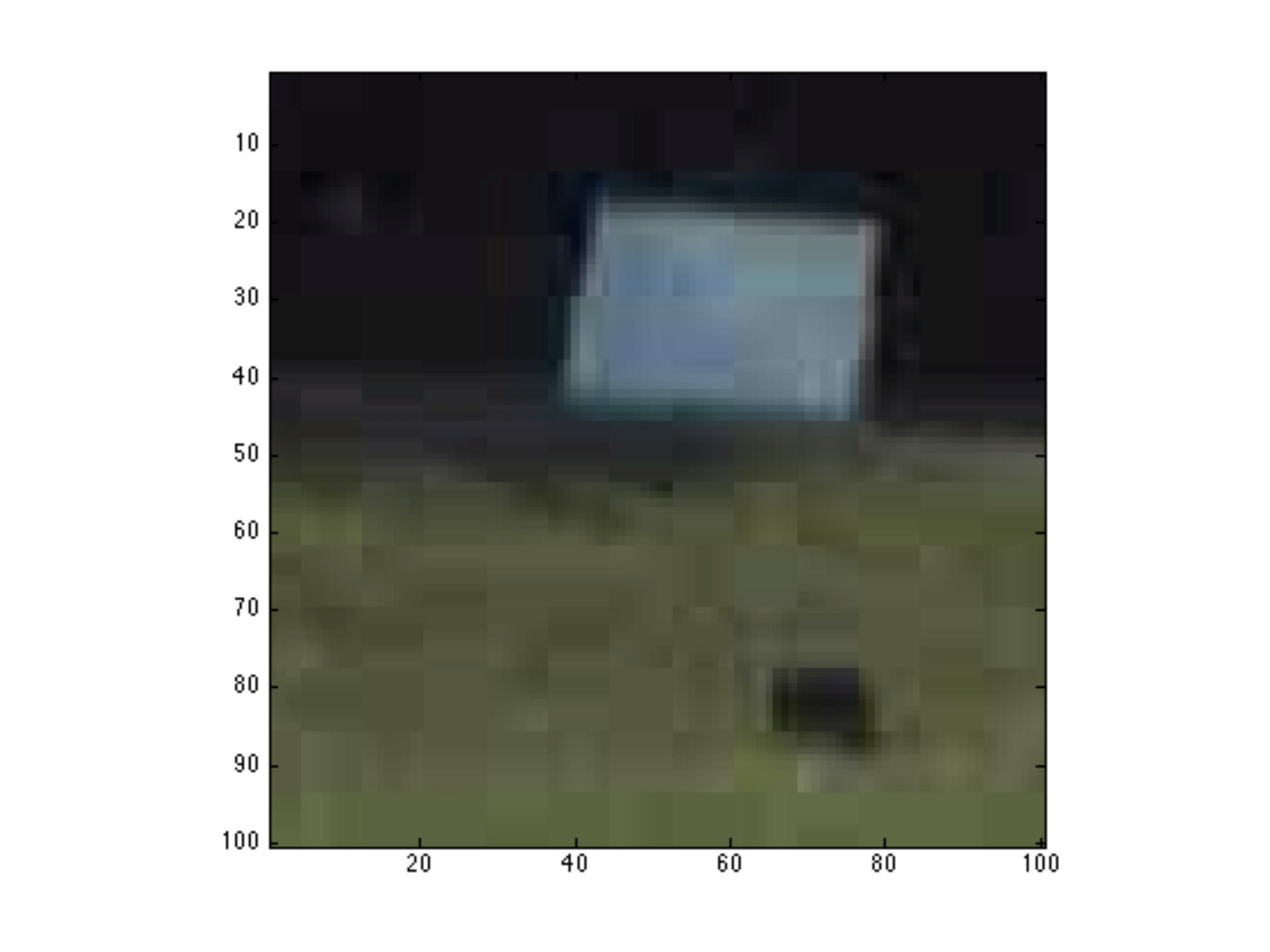}
}
\caption{Color images of the four months.}
\label{fig: color images of four months}
\end{figure}

\begin{figure}
\center
\subfigure[Original camera spectra]{
\includegraphics[width=2.5in]{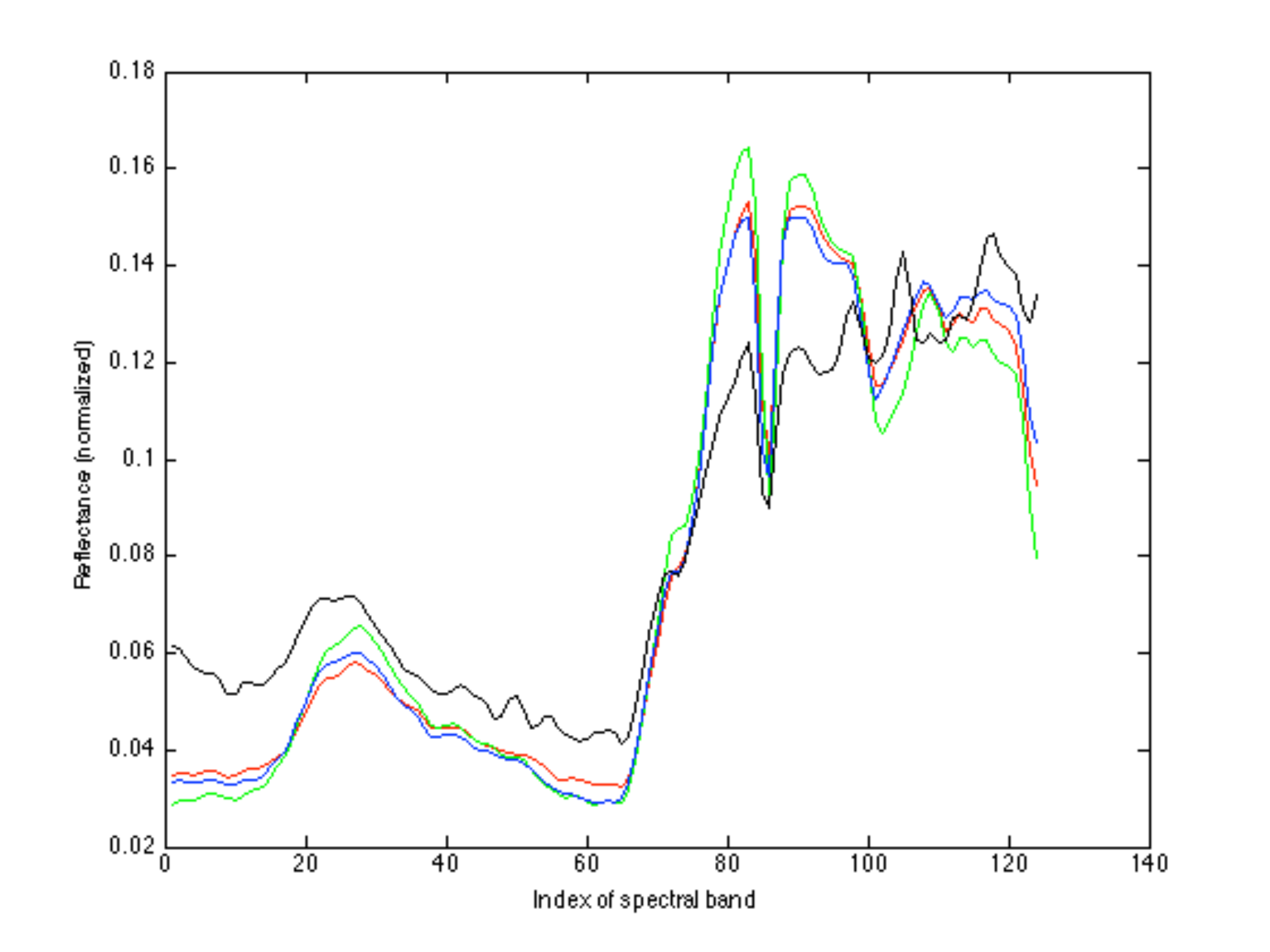}
\label{fig: original spectra}
}
\subfigure[Random camera spectra]{
\includegraphics[width=2.5in]{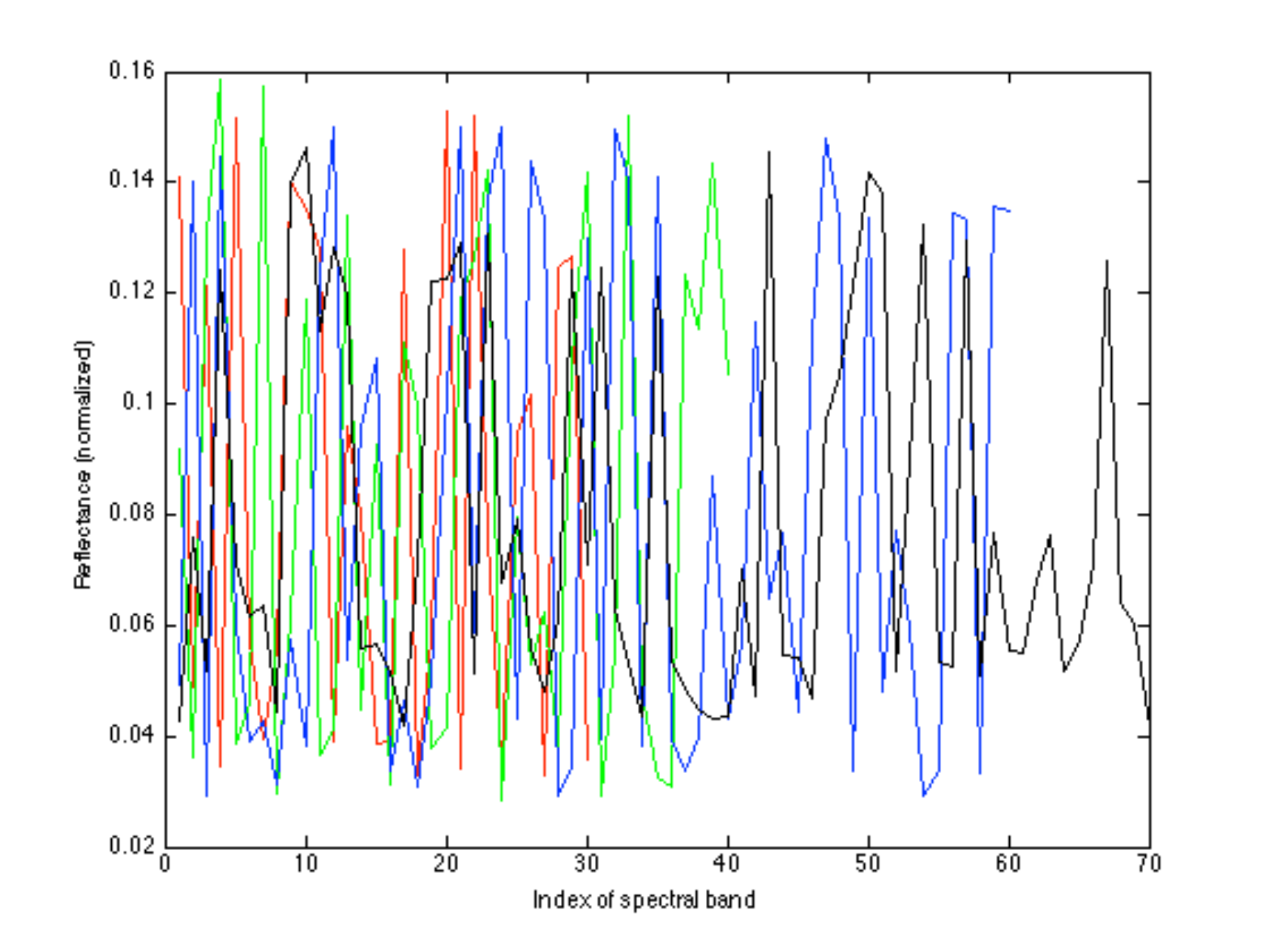}
\label{fig: random spectra}
}
\caption{Spectrum of a single grass pixel across the four months. Red:
  August, green: September, blue: October, black: November.}
\end{figure}

We set the parameter space as $\I = \{ \text{aug, sep, oct, nov, chg}
\}$, where $\text{chg}$ denotes the October data set with the tarp in
it. We also set $\I^{(4)} \triangleq \{ \text{aug, sep, oct, nov} \}
\subset \I$. For each $\alpha \in \I$, we let $X_{\alpha}$ denote the
corresponding $100 \times 100 \times 124$ hyperspectral image. The
data points $x \in X_{\alpha}$ are the spectral signatures of
each pixel; that is, $|X_{\alpha}| = 10000$ and $x \in
\R^{124}$ for each $\alpha \in \I$. For each month as well as the
changed data set, we computed a Gaussian kernel of the form: 
\begin{equation*}
k_{\alpha}(x,y) = e^{-\|x-y\|^2/\varepsilon(\alpha)^2}, \quad
\text{for all } \alpha \in \I, \enspace x,y \in X_{\alpha},
\end{equation*}
where $\|\cdot\|$ is the Euclidean distance and $\varepsilon(\alpha)$
was selected so that the corresponding
symmetric diffusion operator (matrix) $A_{\alpha}$ would have second
eigenvalue $\lambda_{\alpha}^{(2)} \approx 0.97$. By forcing each
diffusion operator to have approximately the same second eigenvalue,
the five diffusion processes will spread at approximately the same
rate. We kept the top $20$ eigenvectors and eigenvalues and computed
the diffusion distance between a pixel $x$
taken from $X_{\text{chg}}$ and its corresponding pixel in
$X_{\alpha}$ for each $\alpha \in \I^{(4)}$, i.e., we computed
$D^{(t)}(x_{\text{chg}}, x_{\alpha})$. The results for $t = 1$ are
given in Figures \ref{fig: t1 aug orig}, \ref{fig: t1 sep orig},
\ref{fig: t1 oct orig}, \ref{fig: t1 nov orig}, while the
asymptotic diffusion distance as $t \rightarrow \infty$ is given in
Figures \ref{fig: tinf aug orig}, \ref{fig: tinf sep orig}, \ref{fig:
  tinf oct orig}, \ref{fig: tinf nov orig}. We also computed the
global diffusion distances between the changed data set and the four
months. The results are given in Figure \ref{fig: global diff dist orig}. Note that the diffusion
distance at diffusion time $t=1$ was computed via Theorem \ref{thm:
  common embedding}, the asymptotic diffusion distance was
computed using \eqref{eqn: asymptotic diff dist} from Remark \ref{rem:
  asymptotic diff dist}, and the global diffusion distance was
computed using Theorem \ref{thm: simplified global diff dist}.

\begin{figure}
\center
\subfigure[August (original)]{
\includegraphics[width=1.2in]{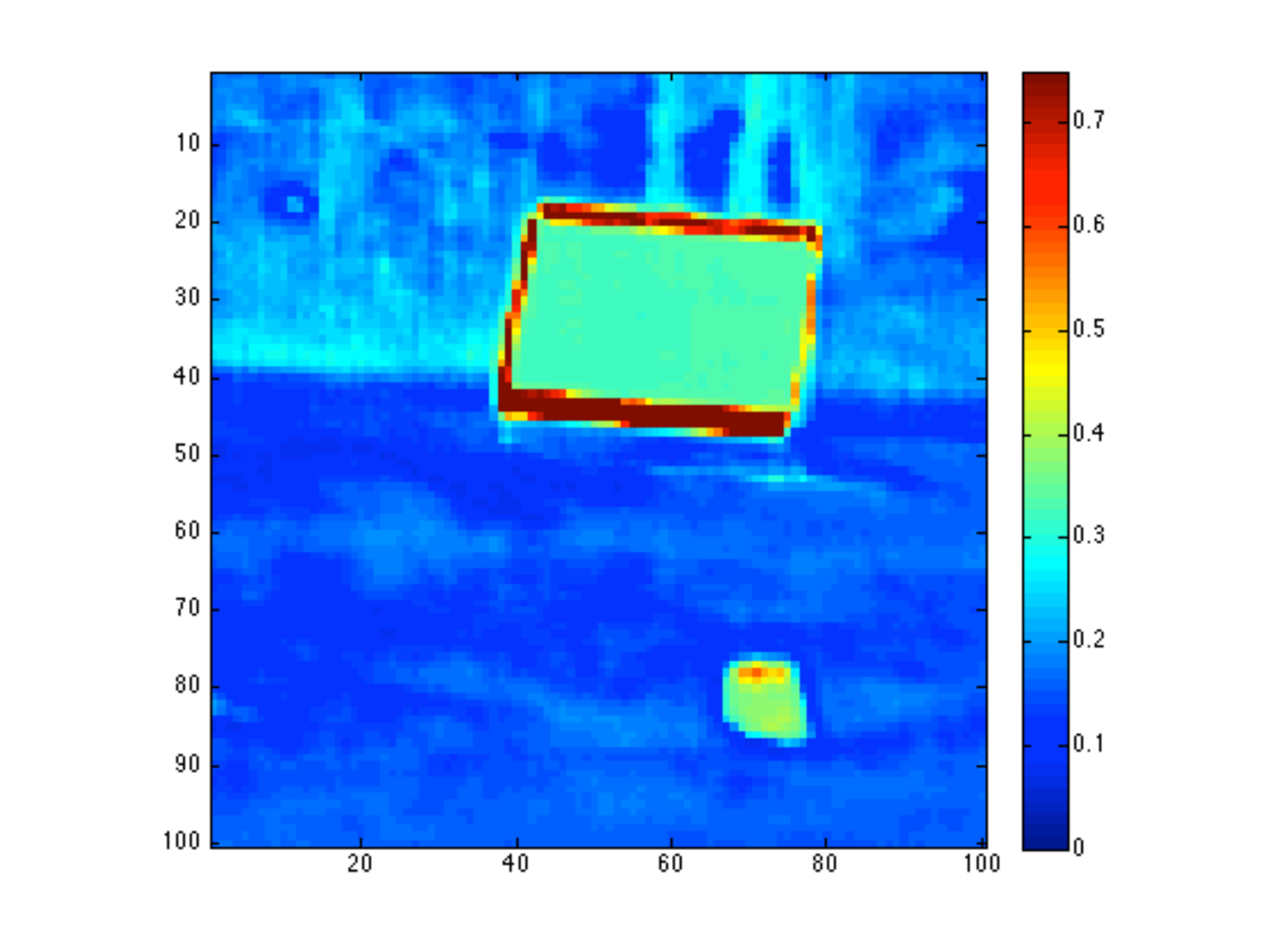}
\label{fig: t1 aug orig}
}
\subfigure[September (original)]{
\includegraphics[width=1.2in]{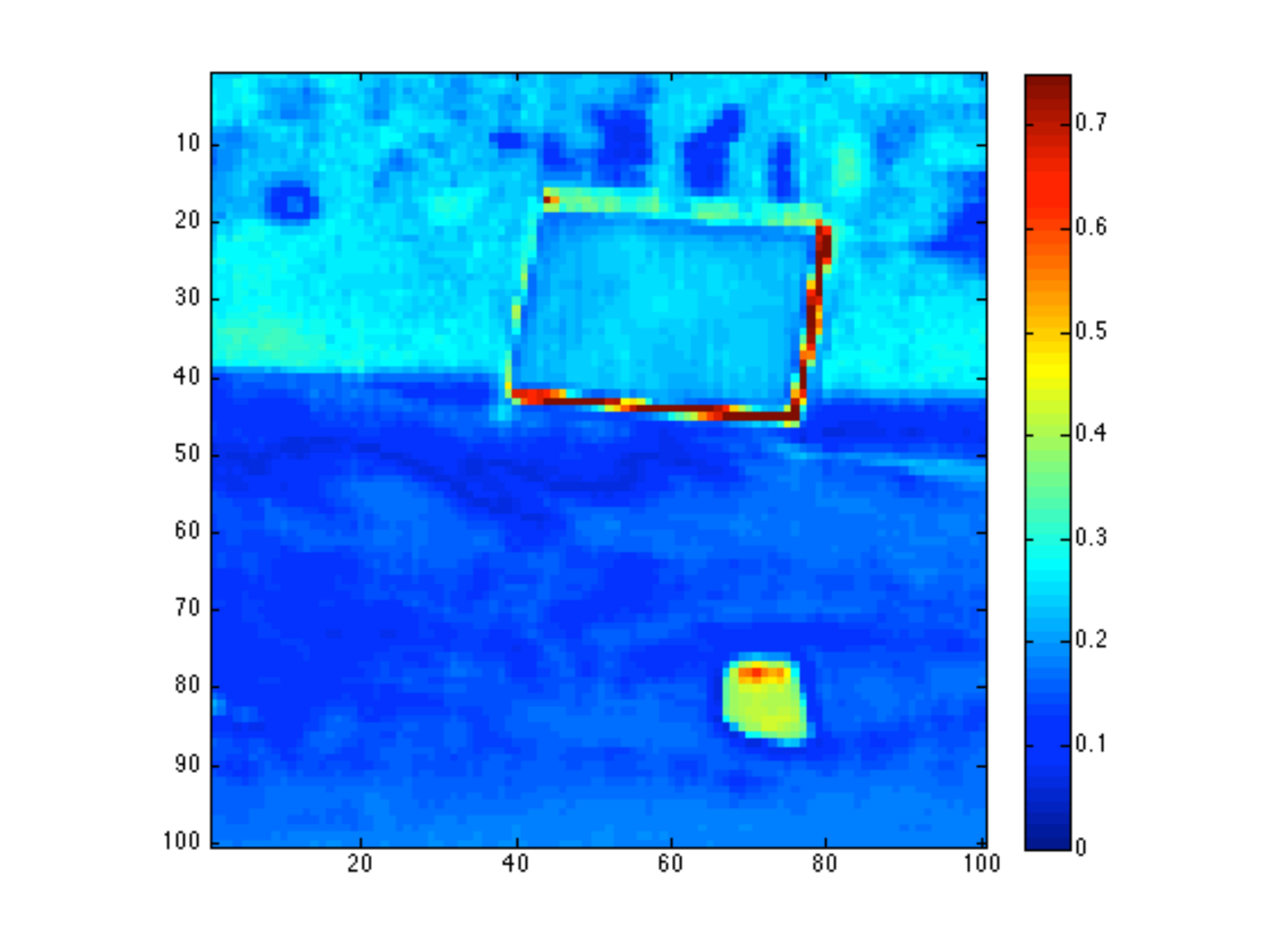}
\label{fig: t1 sep orig}
}
\subfigure[October (original)]{
\includegraphics[width=1.2in]{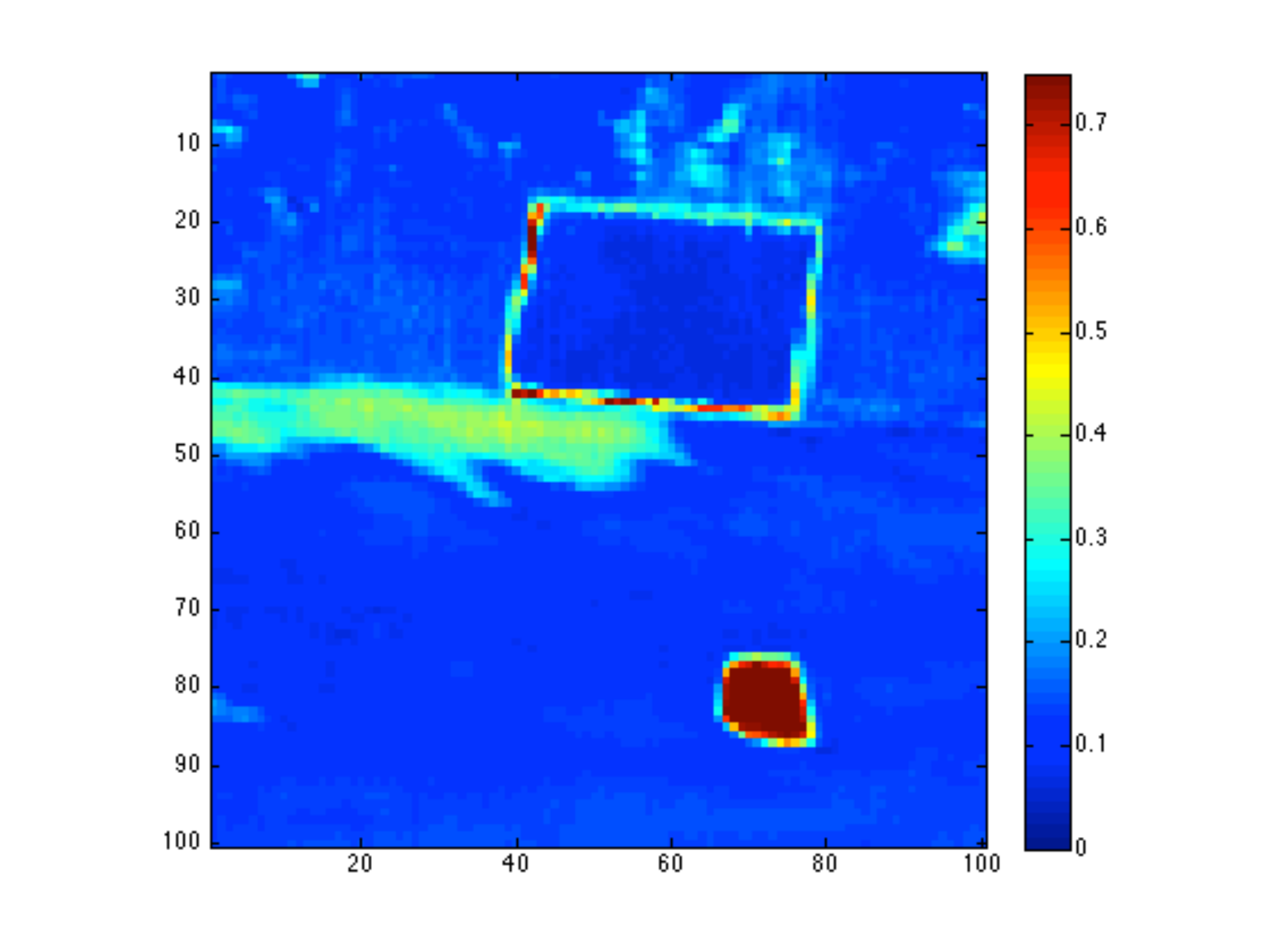}
\label{fig: t1 oct orig}
}
\subfigure[November (original)]{
\includegraphics[width=1.2in]{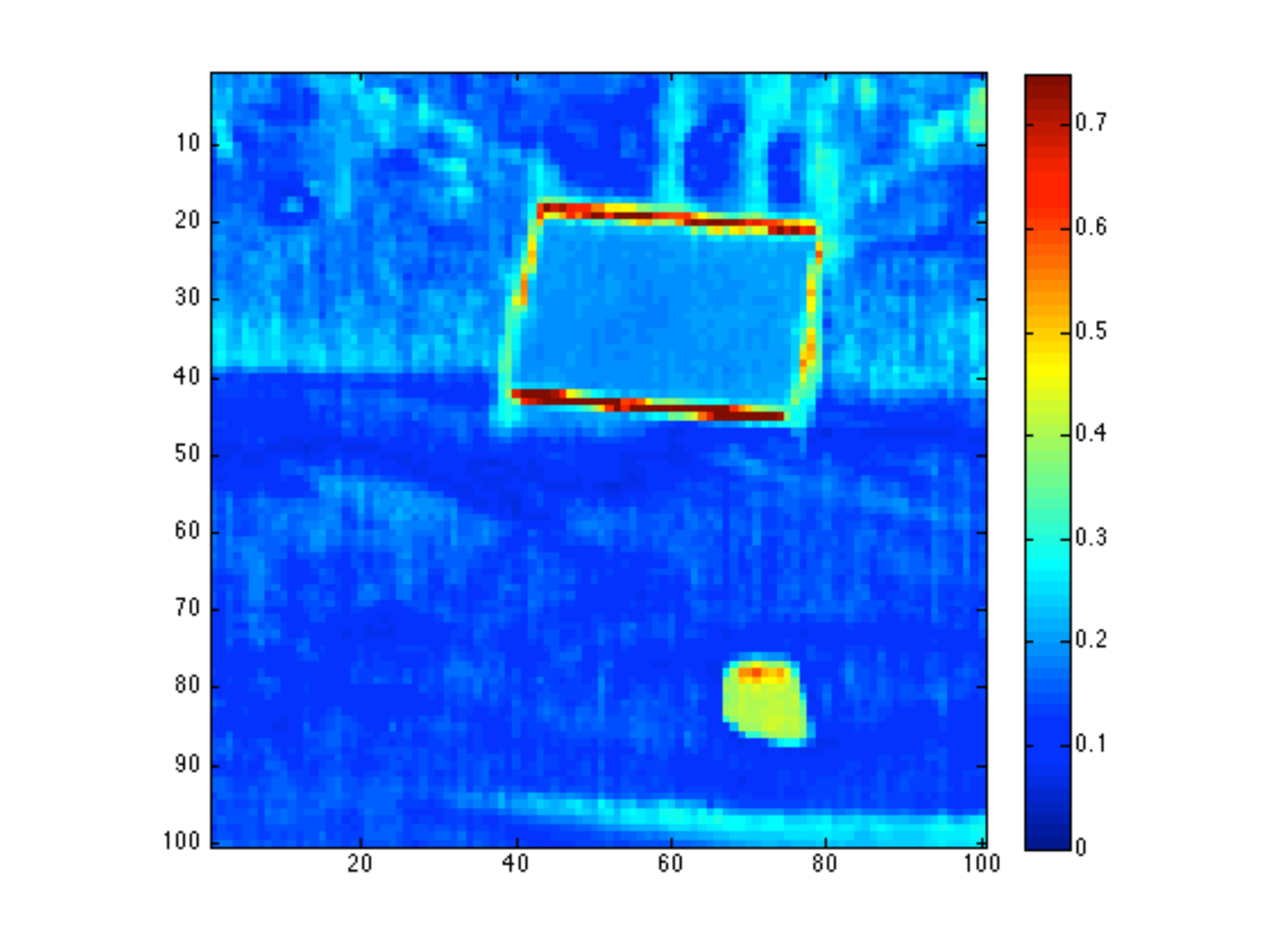}
\label{fig: t1 nov orig}
}
\subfigure[August (random)]{
\includegraphics[width=1.2in]{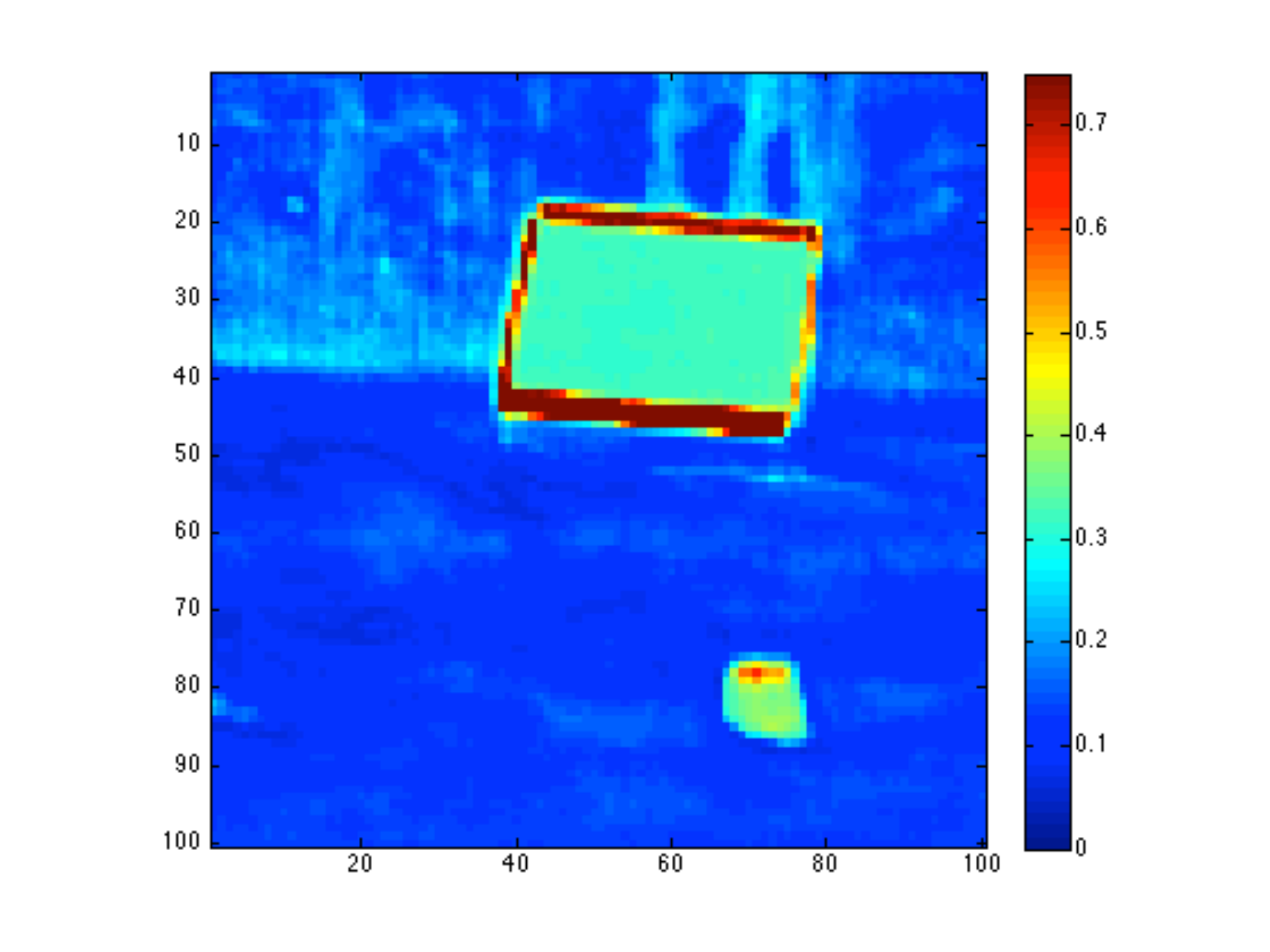}
\label{fig: t1 aug rand}
}
\subfigure[September (random)]{
\includegraphics[width=1.2in]{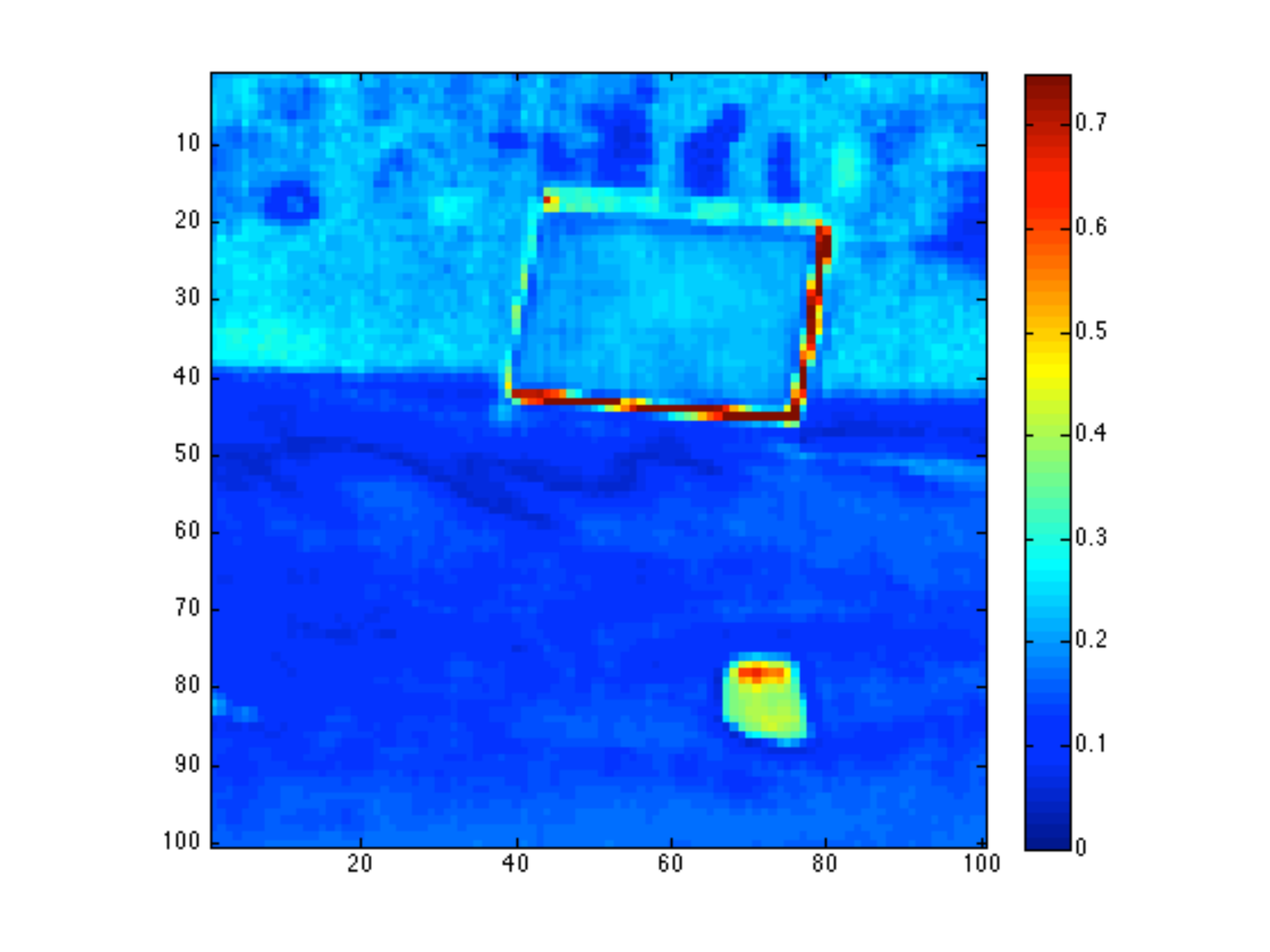}
\label{fig: t1 sep rand}
}
\subfigure[October (random)]{
\includegraphics[width=1.2in]{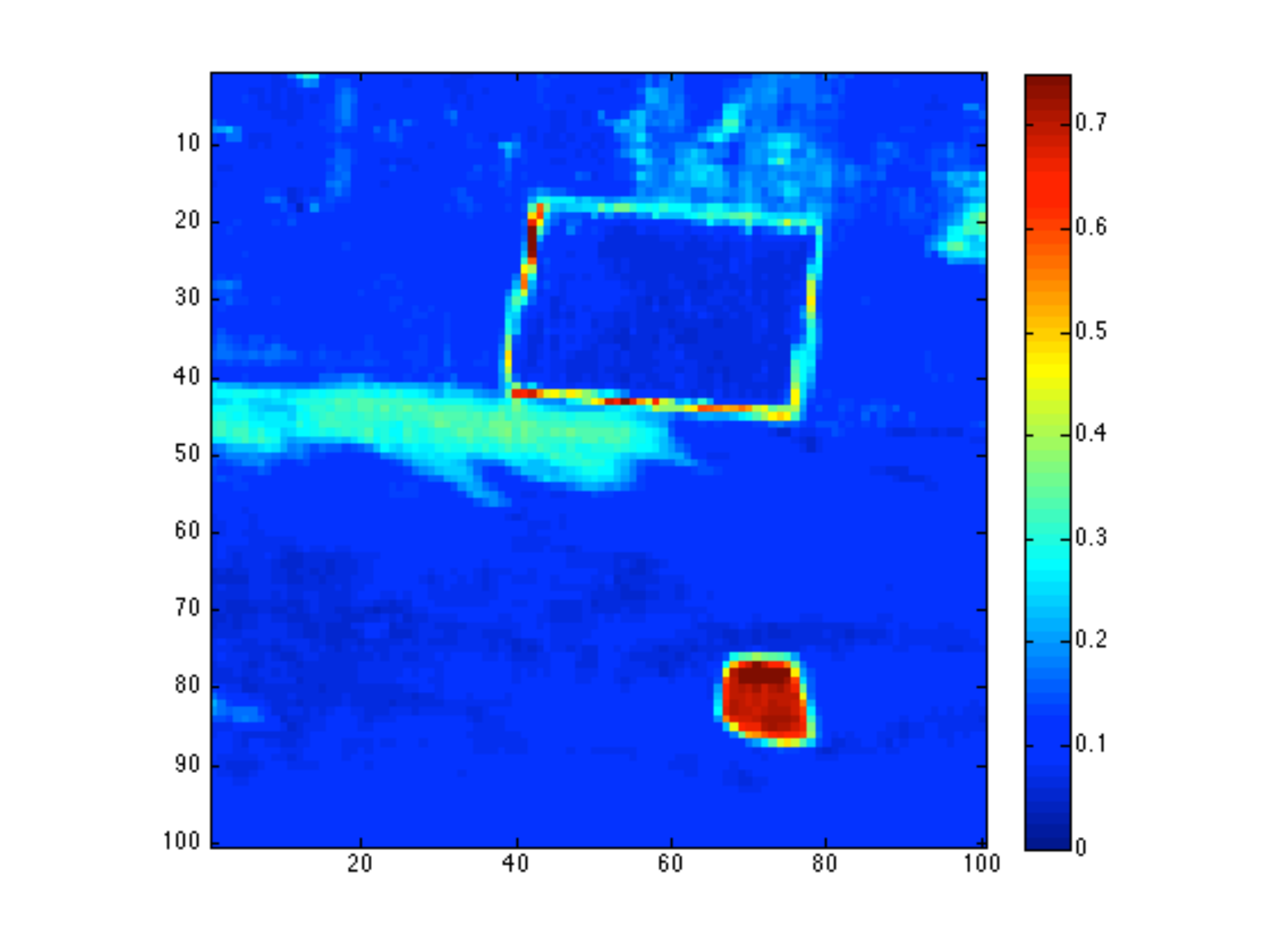}
\label{fig: t1 oct rand}
}
\subfigure[November (random)]{
\includegraphics[width=1.2in]{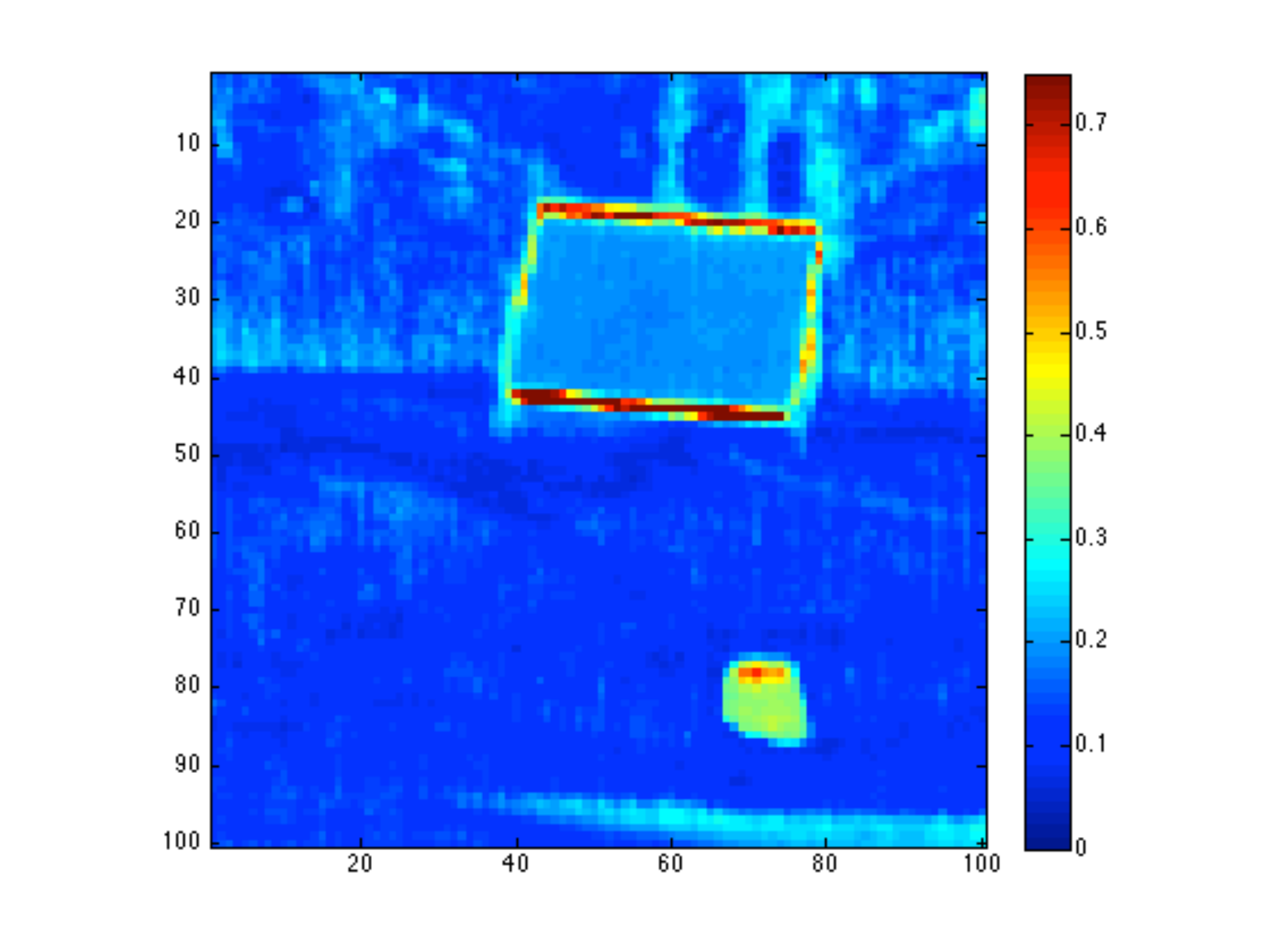}
\label{fig: t1 nov rand}
}
\subfigure[August (noisy random)]{
\includegraphics[width=1.2in]{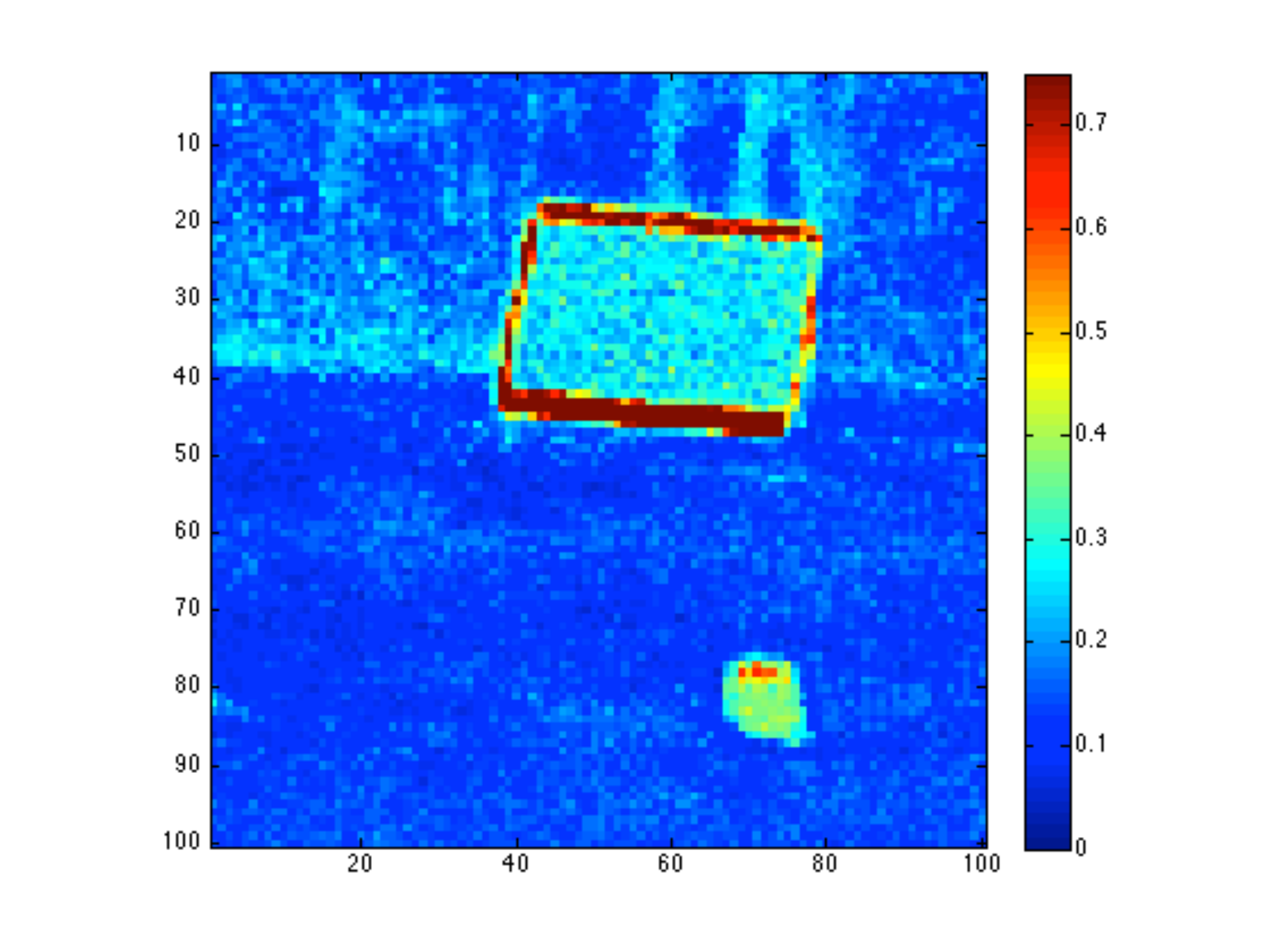}
\label{fig: t1 aug noisy rand}
}
\subfigure[September (noisy random)]{
\includegraphics[width=1.2in]{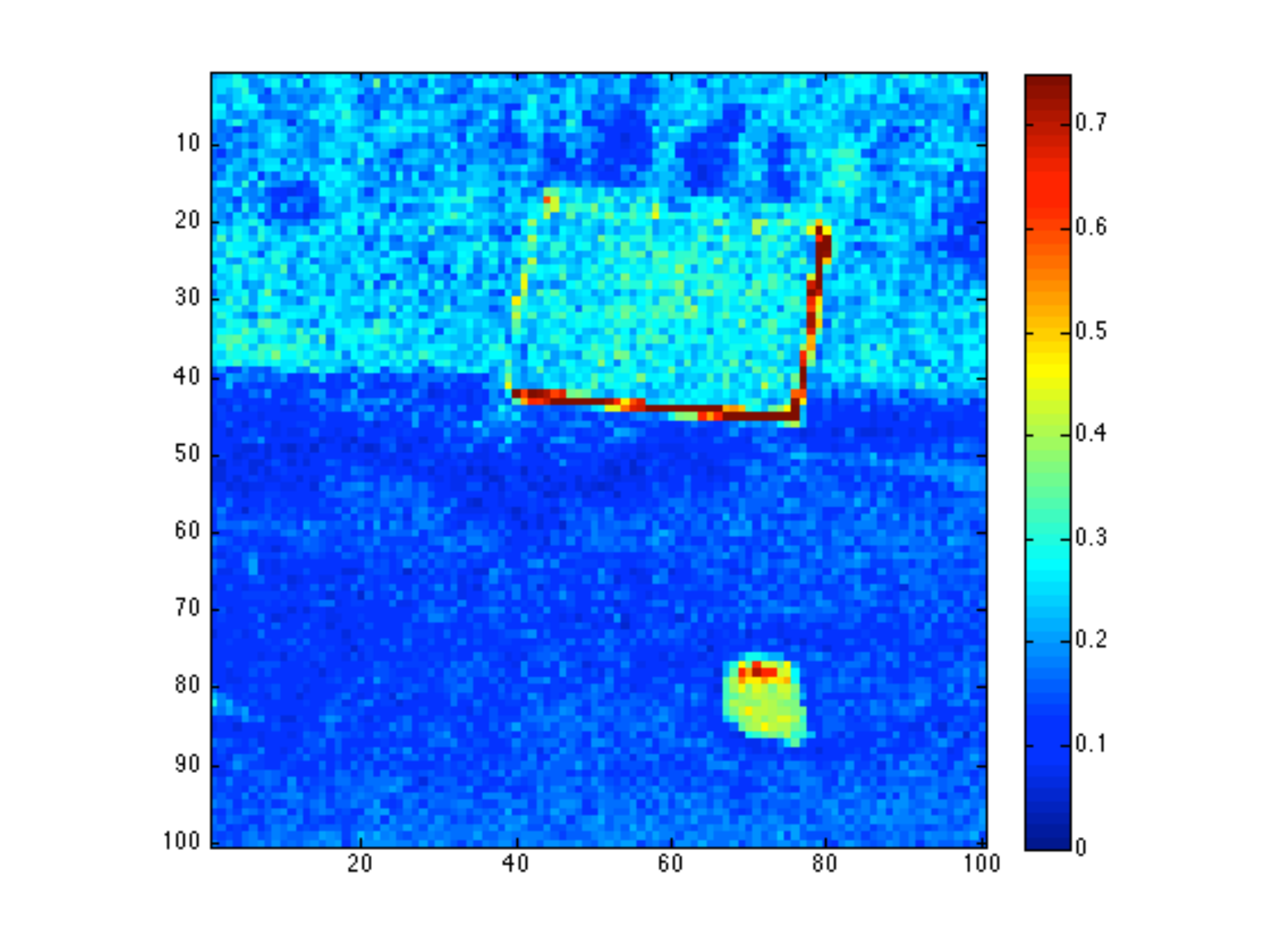}
\label{fig: t1 sep noisy rand}
}
\subfigure[October (noisy random)]{
\includegraphics[width=1.2in]{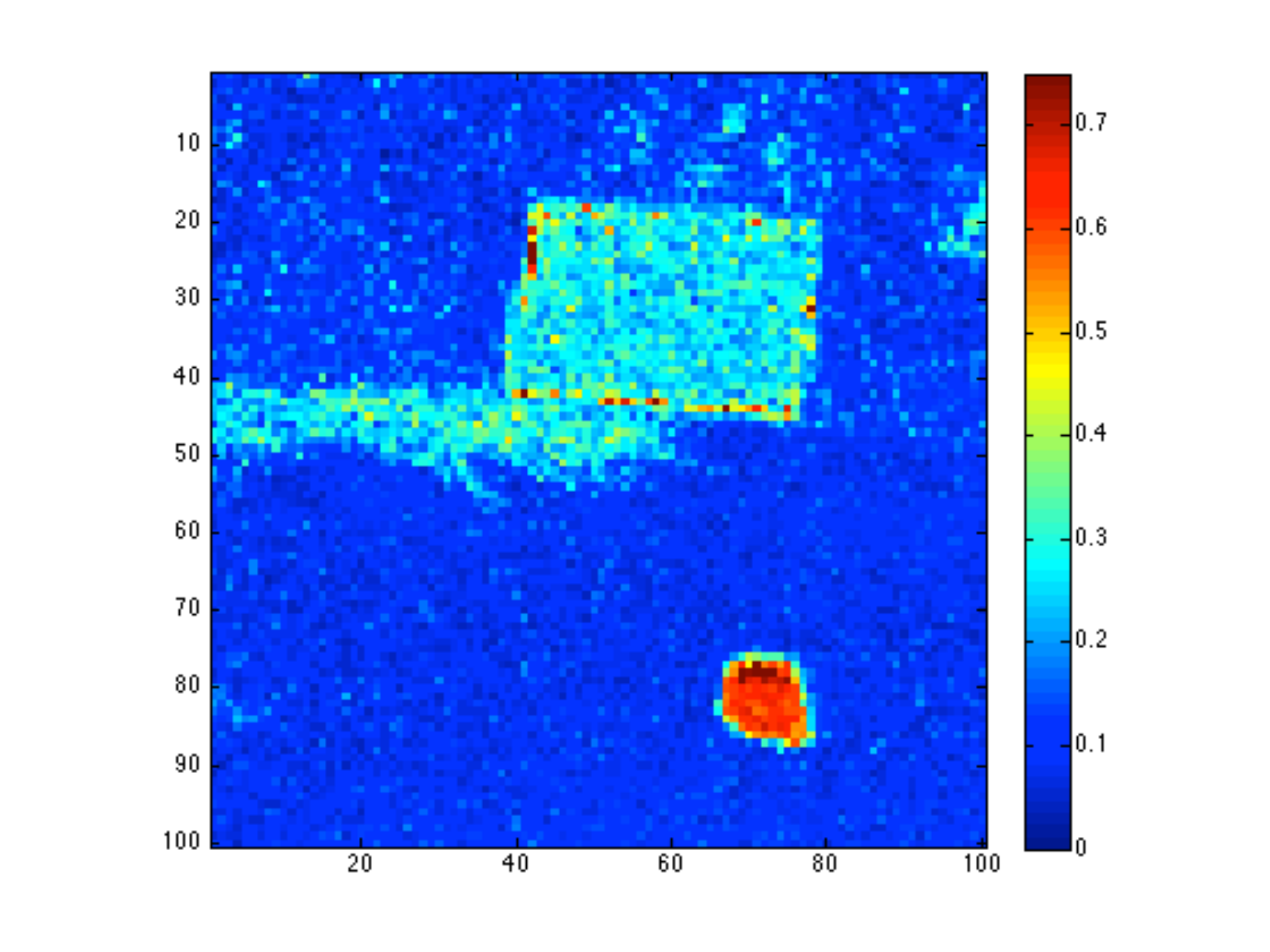}
\label{fig: t1 oct noisy rand}
}
\subfigure[November (noisy random)]{
\includegraphics[width=1.2in]{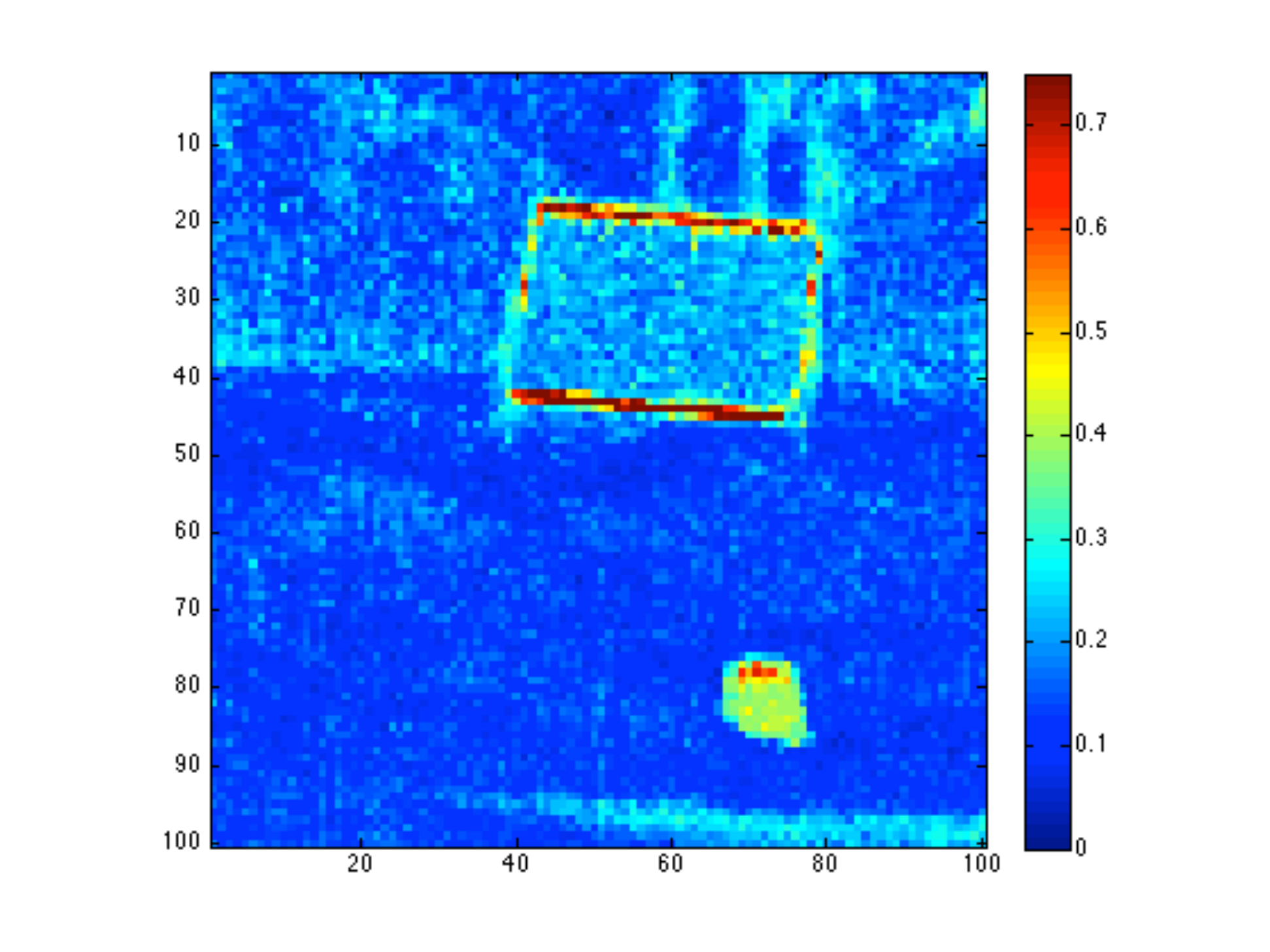}
\label{fig: t1 nov noisy rand}
}
\caption{Map of $D^{(1)}(x_{\text{chg}},x_{\alpha})$ for each $\alpha
  \in \I^{(4)}$ and for each camera type.}
\label{fig: diffusion distance t=1}
\end{figure}

\begin{figure}
\center
\subfigure[August (original)]{
\includegraphics[width=1.2in]{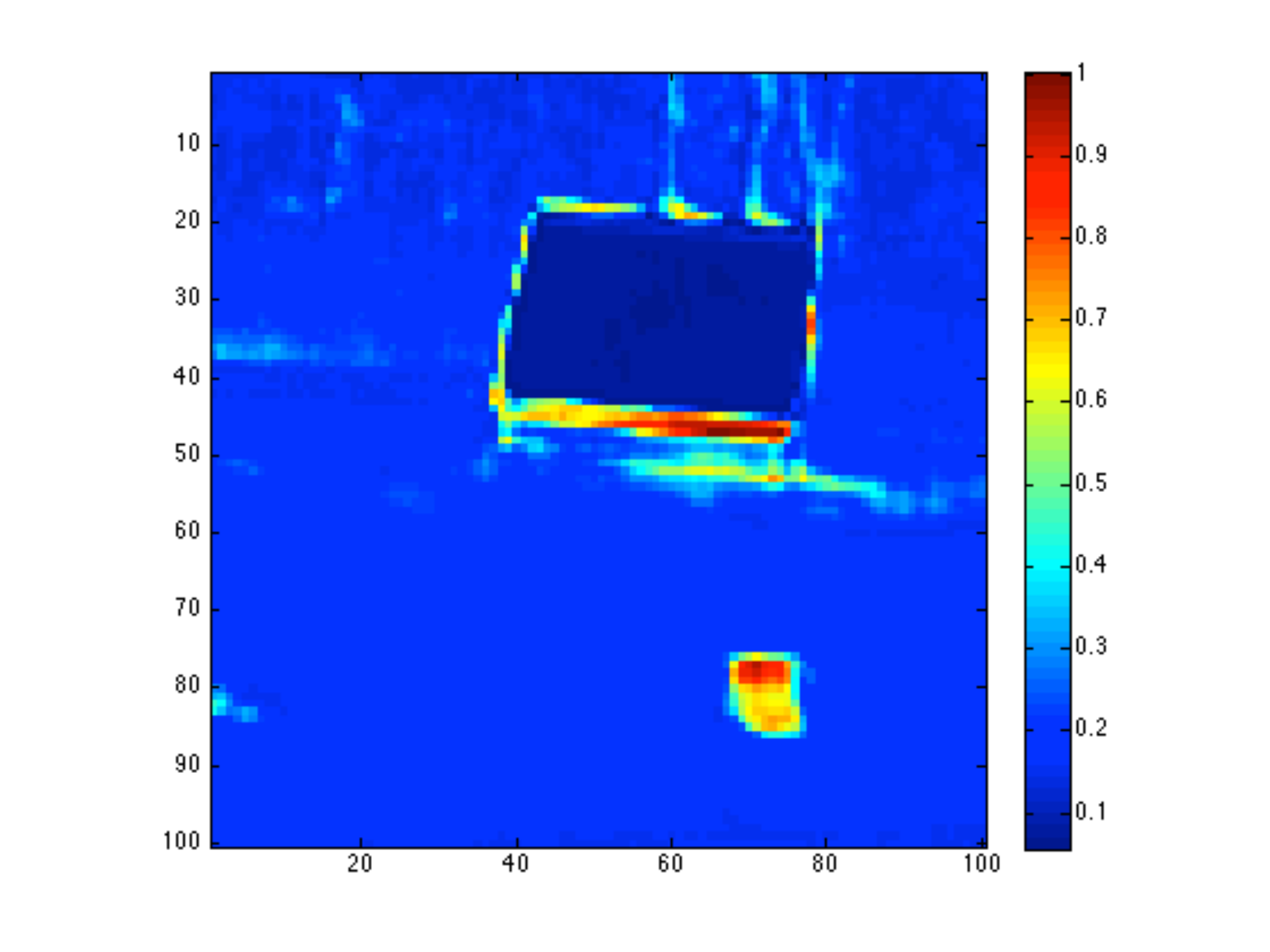}
\label{fig: tinf aug orig}
}
\subfigure[September (original)]{
\includegraphics[width=1.2in]{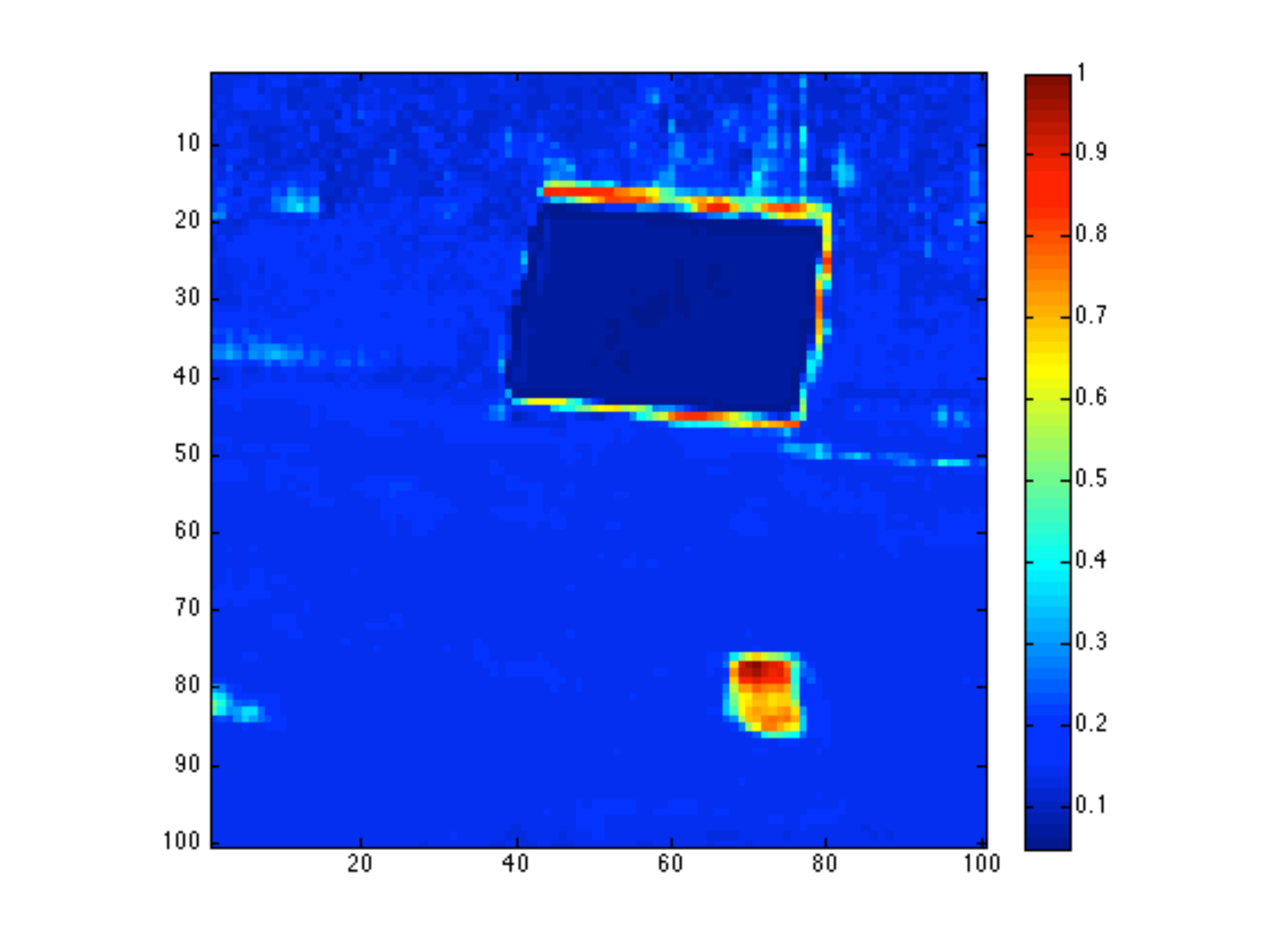}
\label{fig: tinf sep orig}
}
\subfigure[October (original)]{
\includegraphics[width=1.2in]{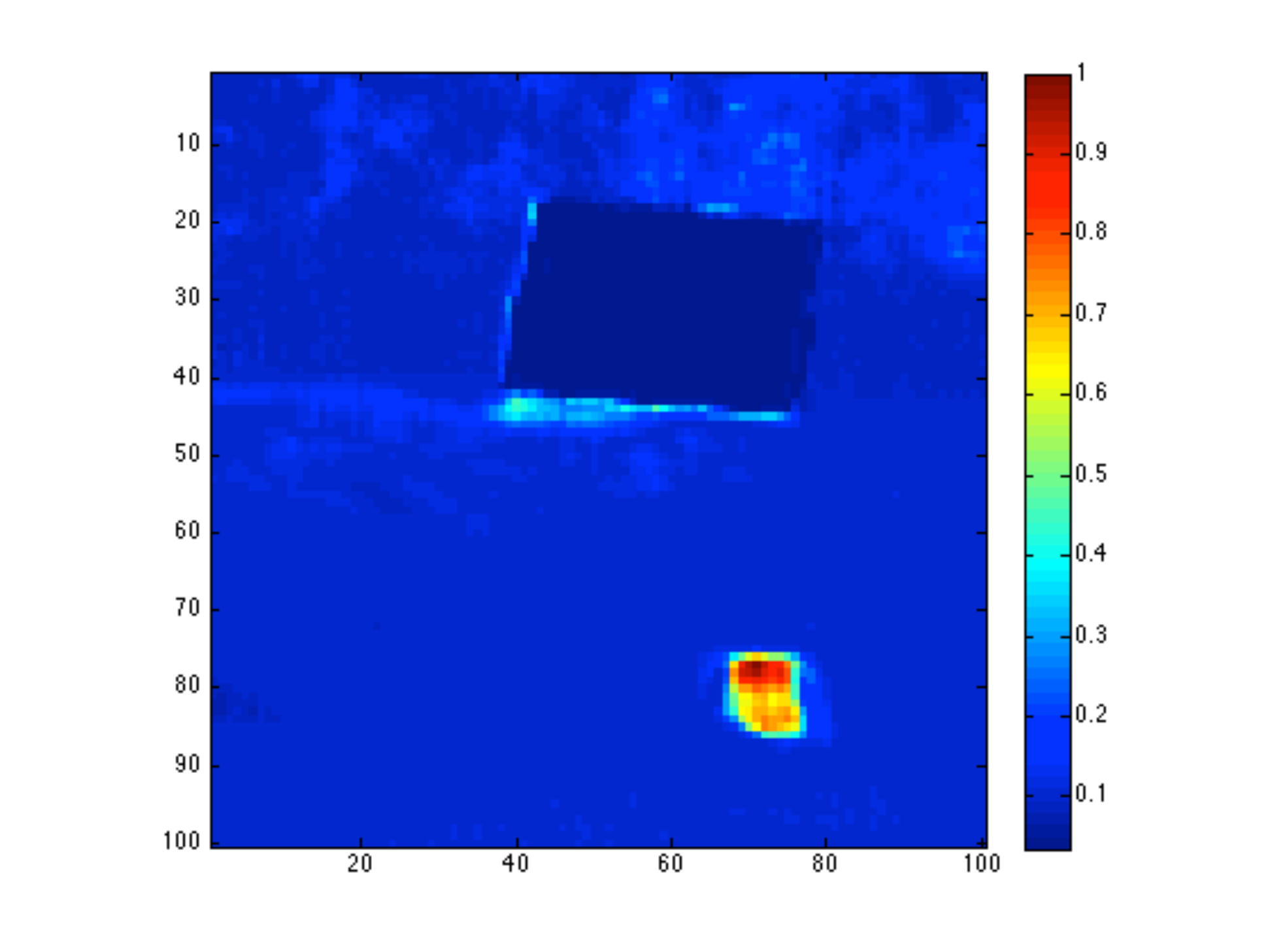}
\label{fig: tinf oct orig}
}
\subfigure[November (original)]{
\includegraphics[width=1.2in]{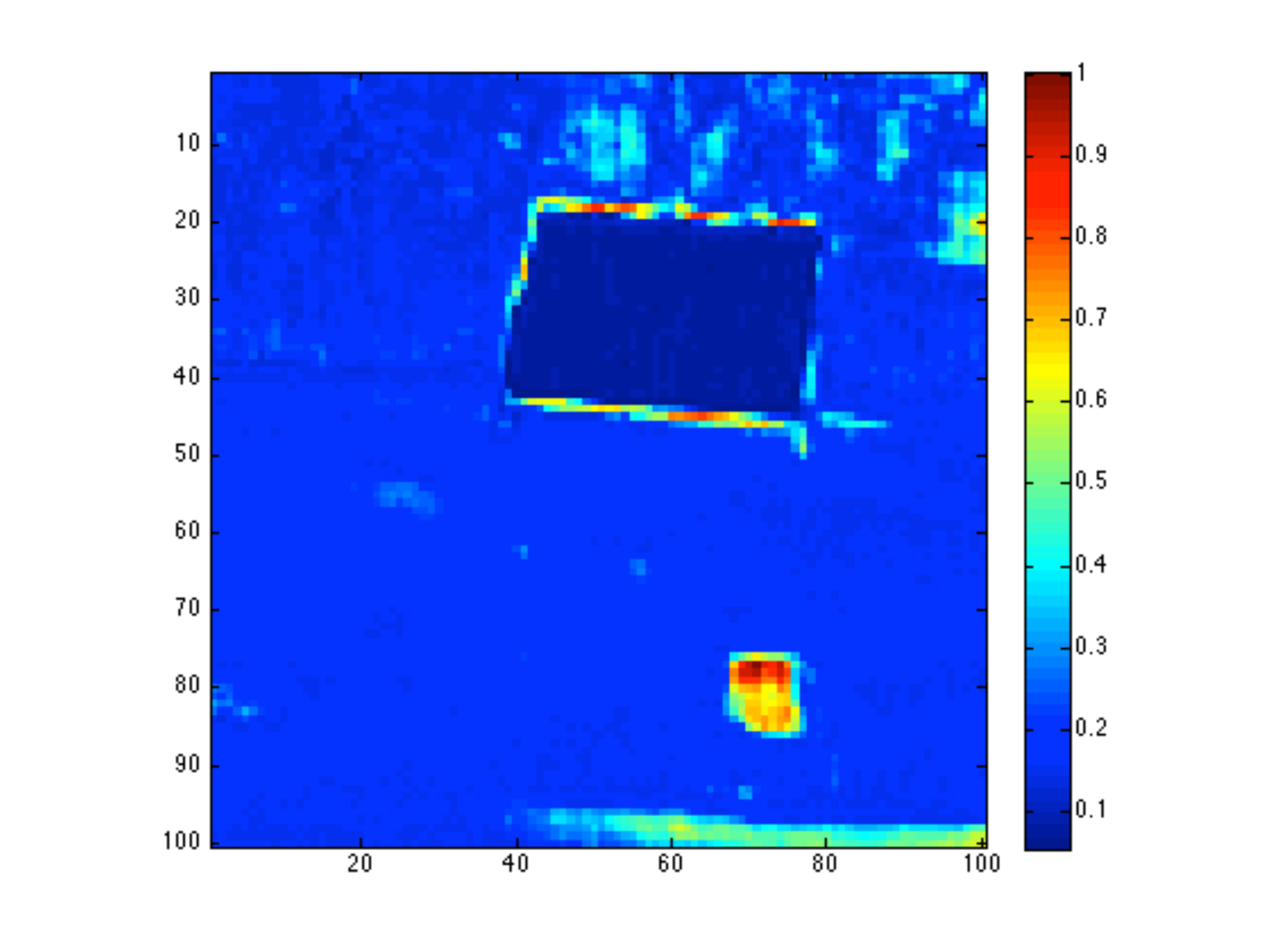}
\label{fig: tinf nov orig}
}
\subfigure[August (random)]{
\includegraphics[width=1.2in]{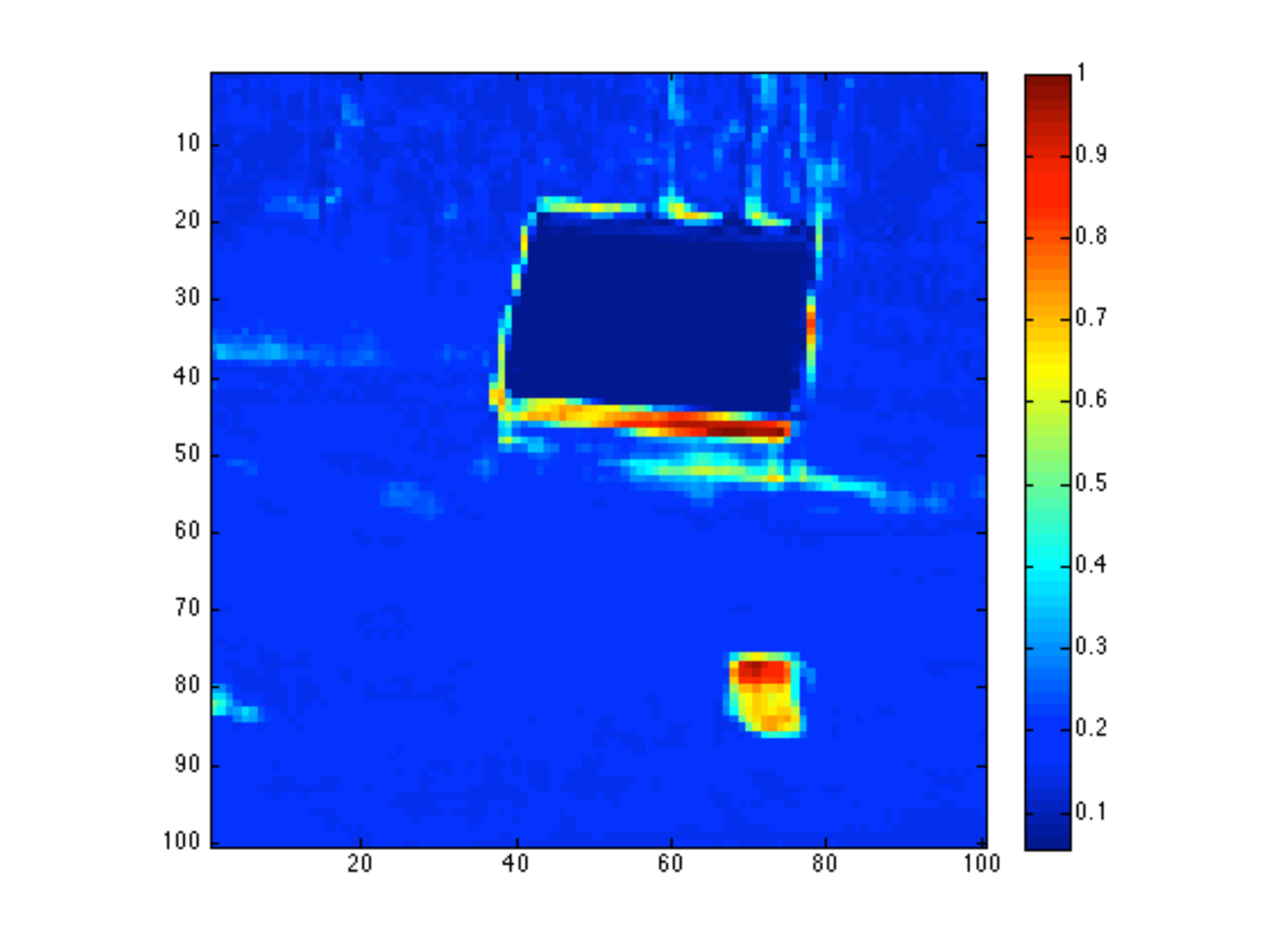}
\label{fig: tinf aug rand}
}
\subfigure[September (random)]{
\includegraphics[width=1.2in]{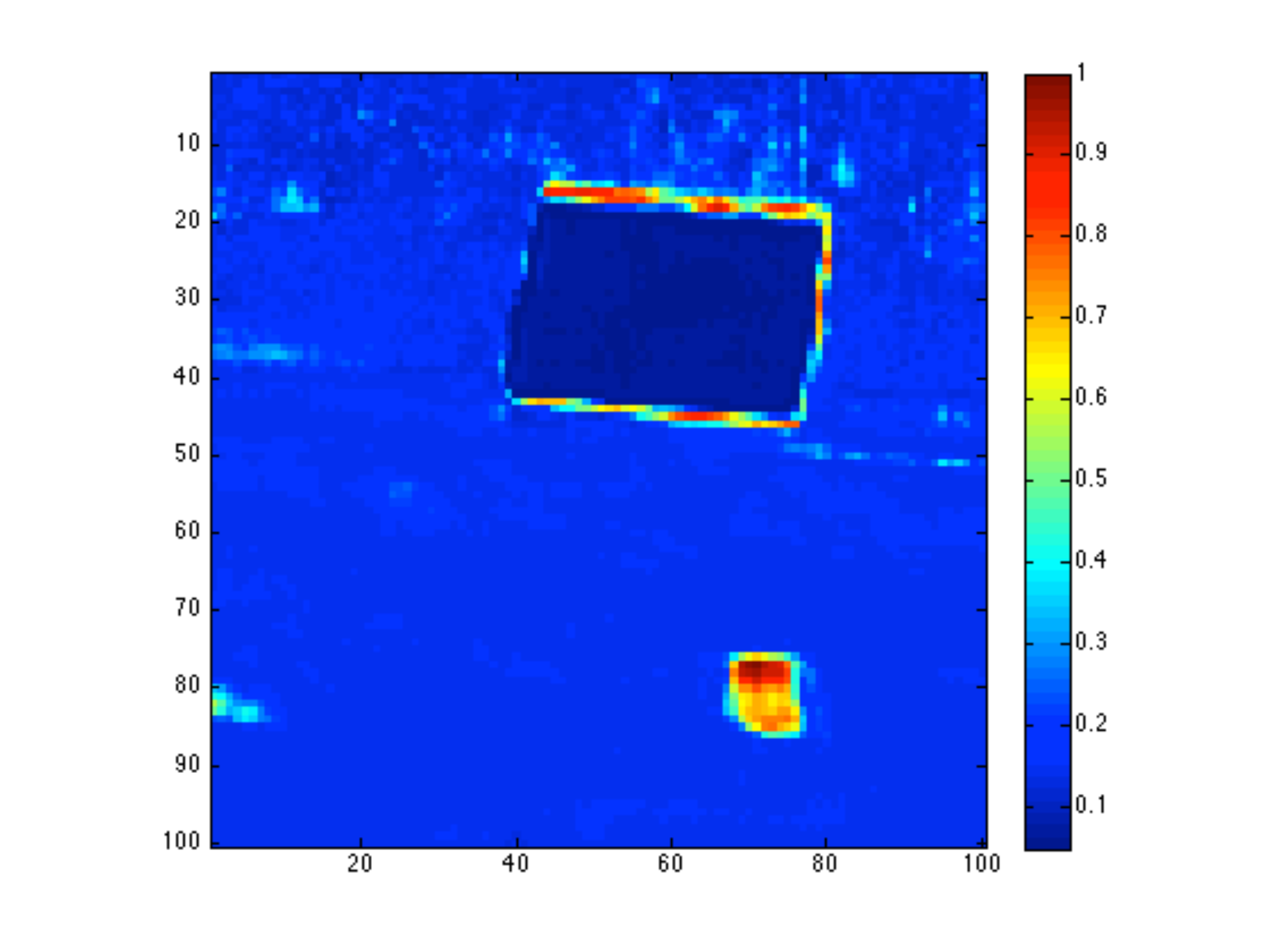}
\label{fig: tinf sep rand}
}
\subfigure[October (random)]{
\includegraphics[width=1.2in]{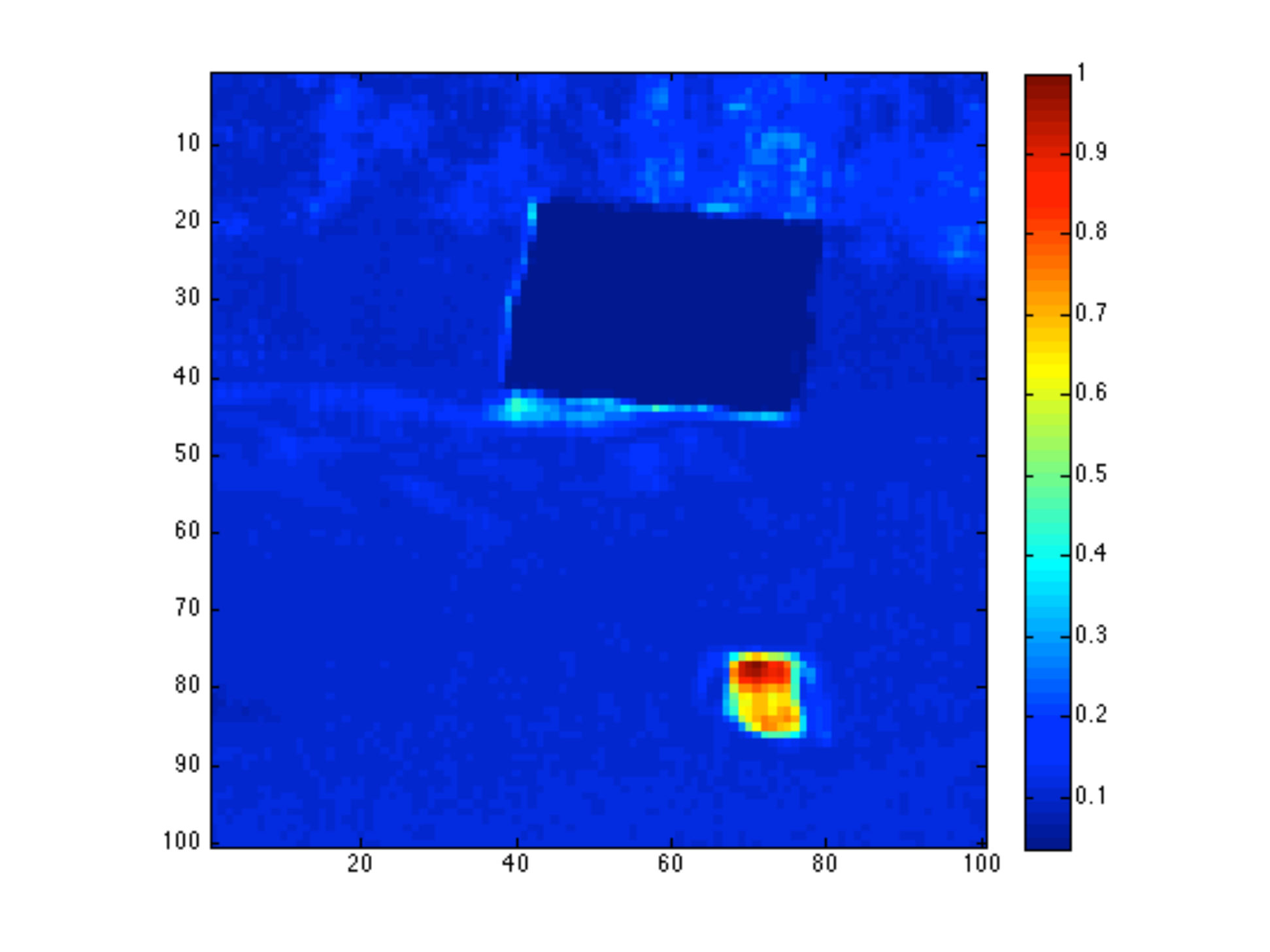}
\label{fig: tinf oct rand}
}
\subfigure[November (random)]{
\includegraphics[width=1.2in]{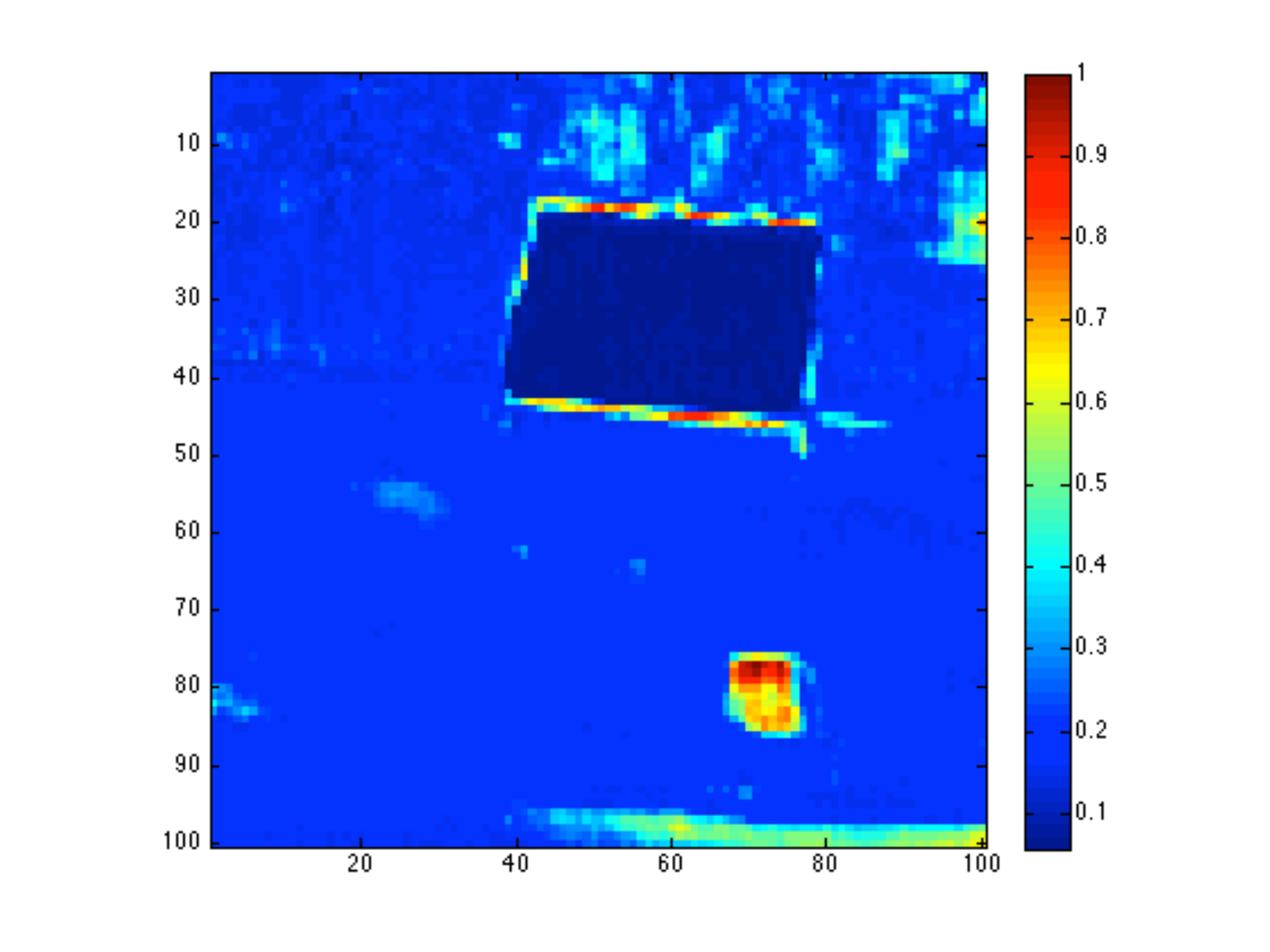}
\label{fig: tinf nov rand}
}
\subfigure[August (noisy random)]{
\includegraphics[width=1.2in]{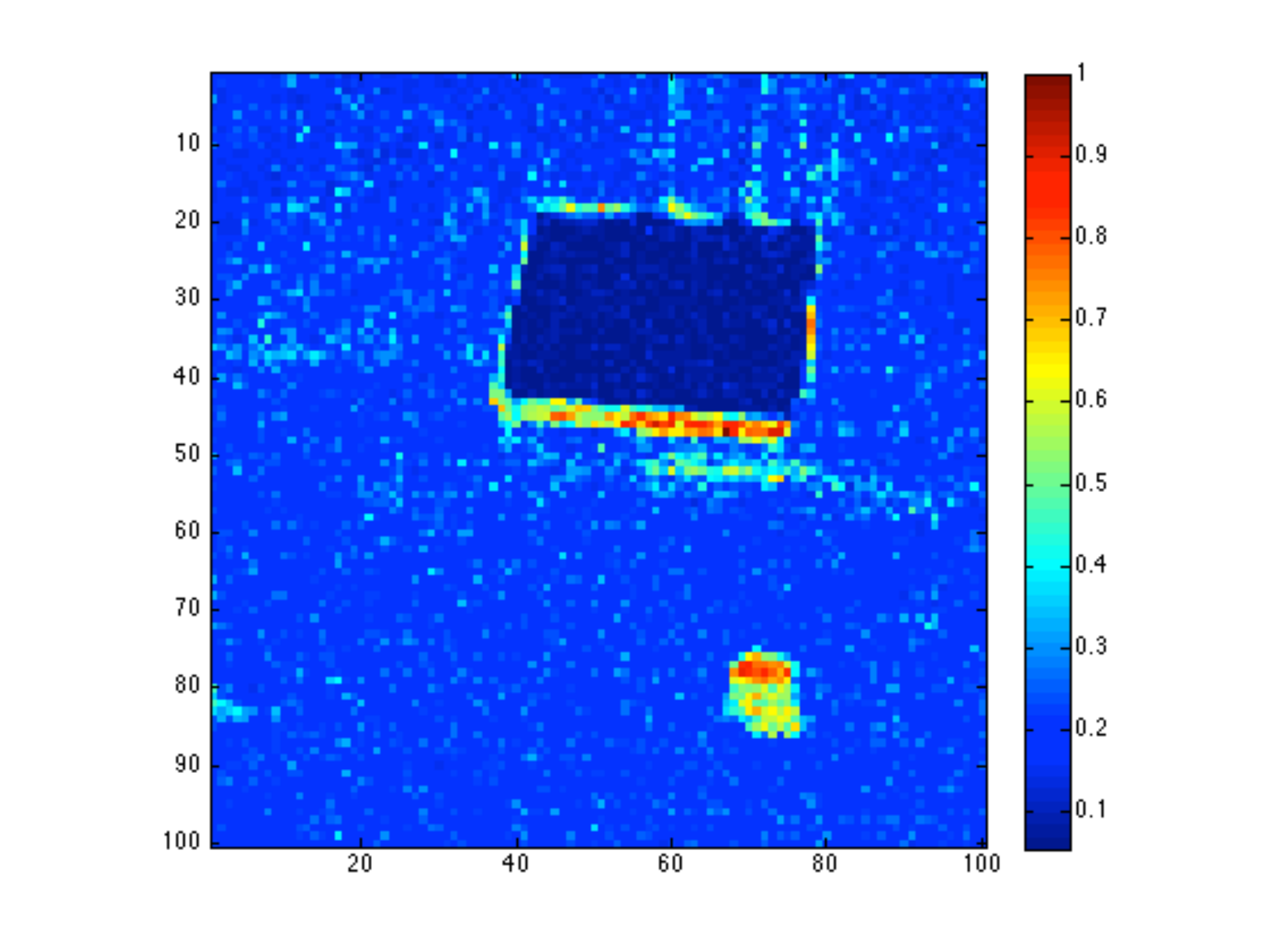}
\label{fig: tinf aug noisy rand}
}
\subfigure[September (noisy random)]{
\includegraphics[width=1.2in]{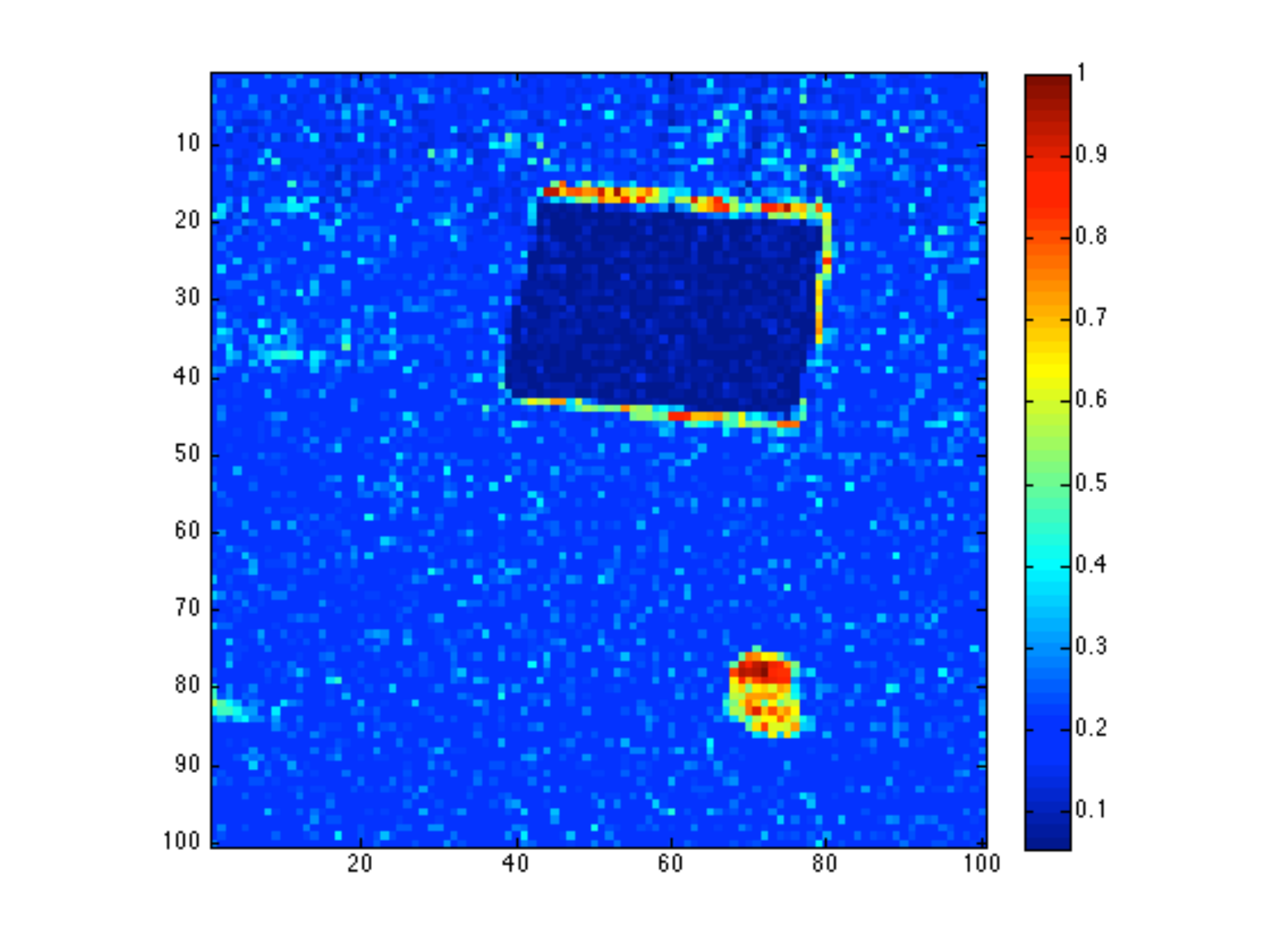}
\label{fig: tinf sep noisy rand}
}
\subfigure[October (noisy random)]{
\includegraphics[width=1.2in]{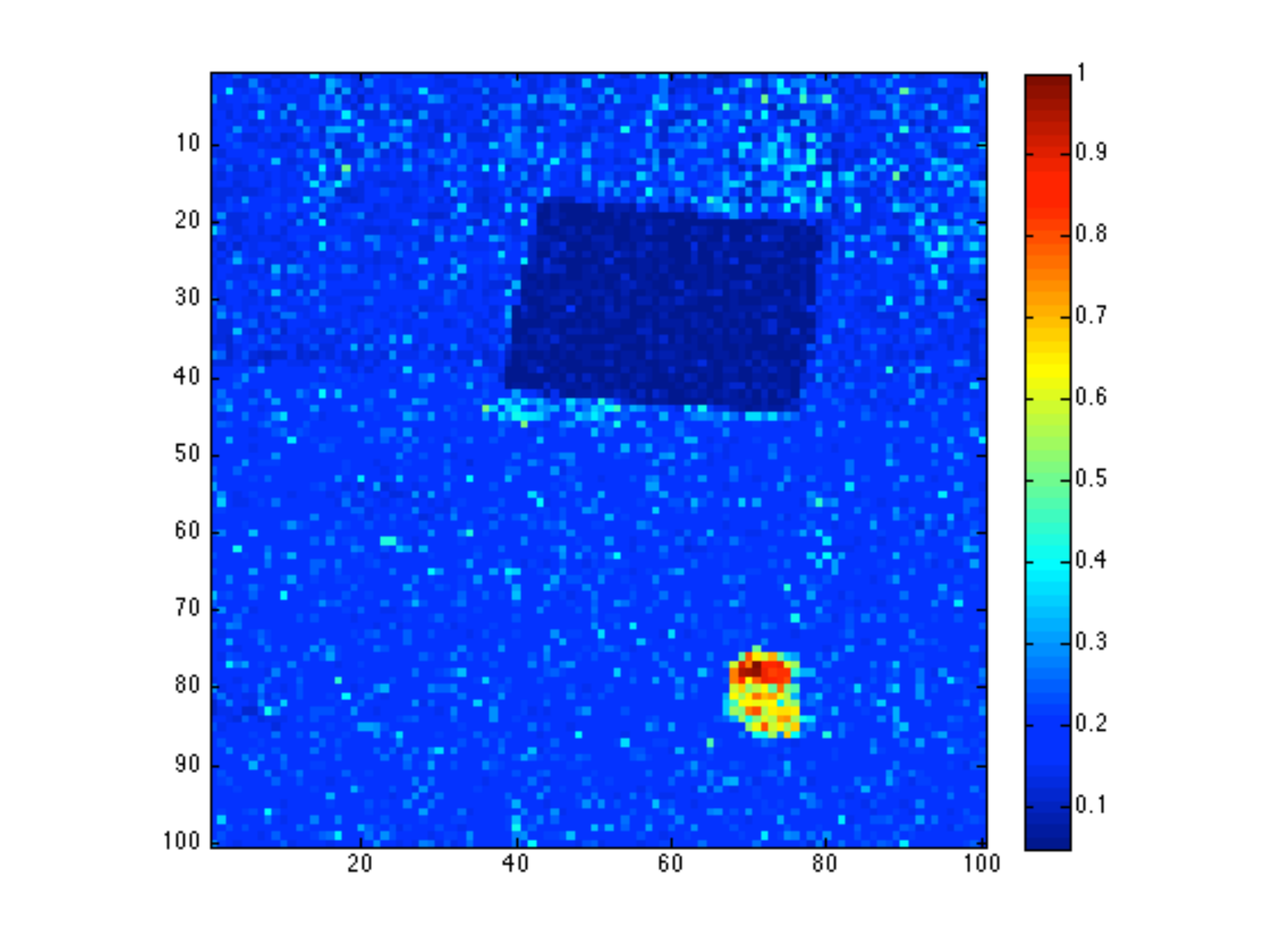}
\label{fig: tinf oct noisy rand}
}
\subfigure[November (noisy random)]{
\includegraphics[width=1.2in]{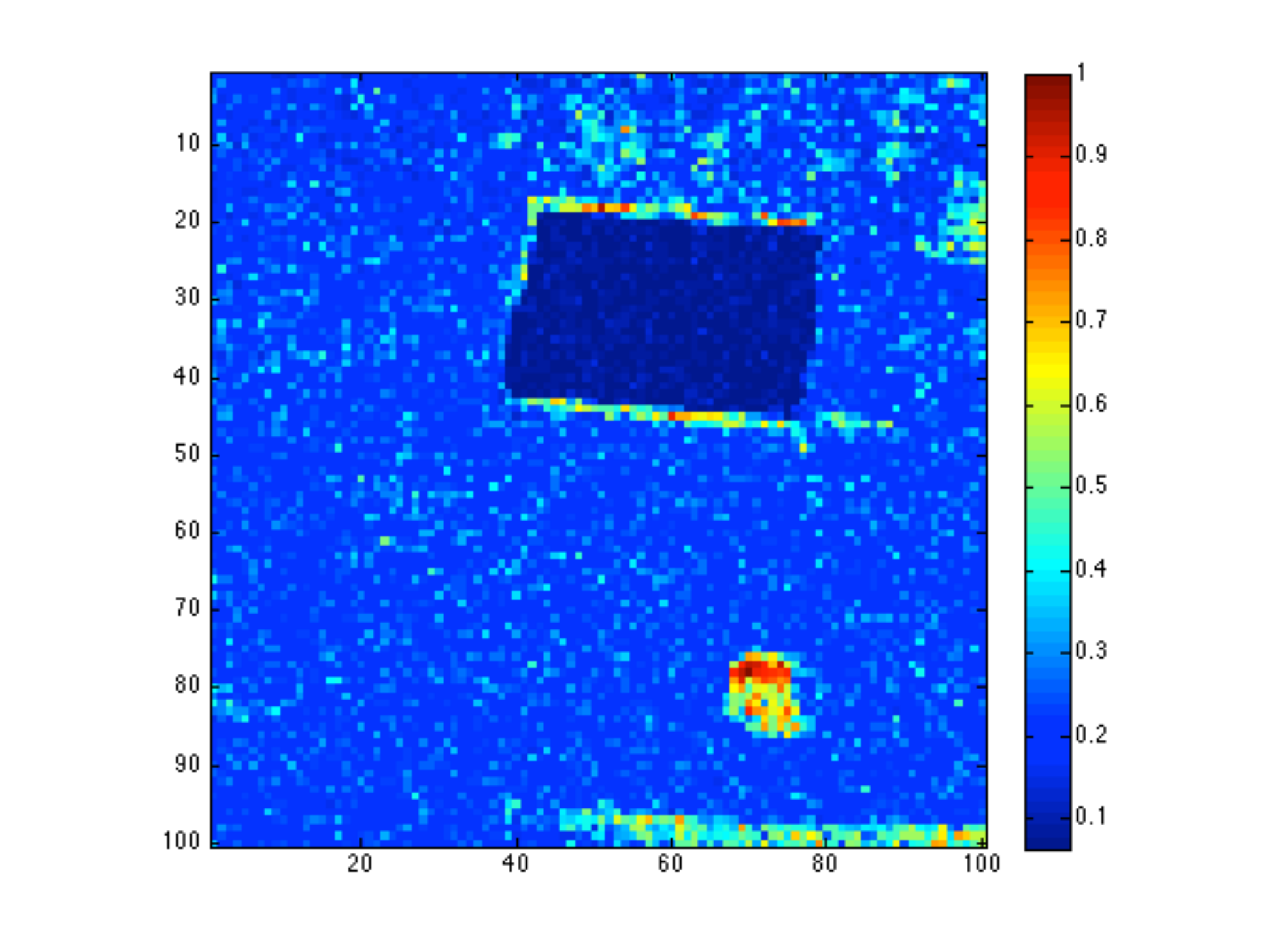}
\label{fig: tinf nov noisy rand}
}
\caption{Map of $\lim_{t \rightarrow
    \infty}D^{(t)}(x_{\text{chg}},x_{\alpha})$ for each $\alpha \in
  \I^{(4)}$ and for each camera type.}
\label{fig: diffusion distance t=infty}
\end{figure}

\begin{figure}
\center
\subfigure[Original camera]{
\includegraphics[width=1.6in]{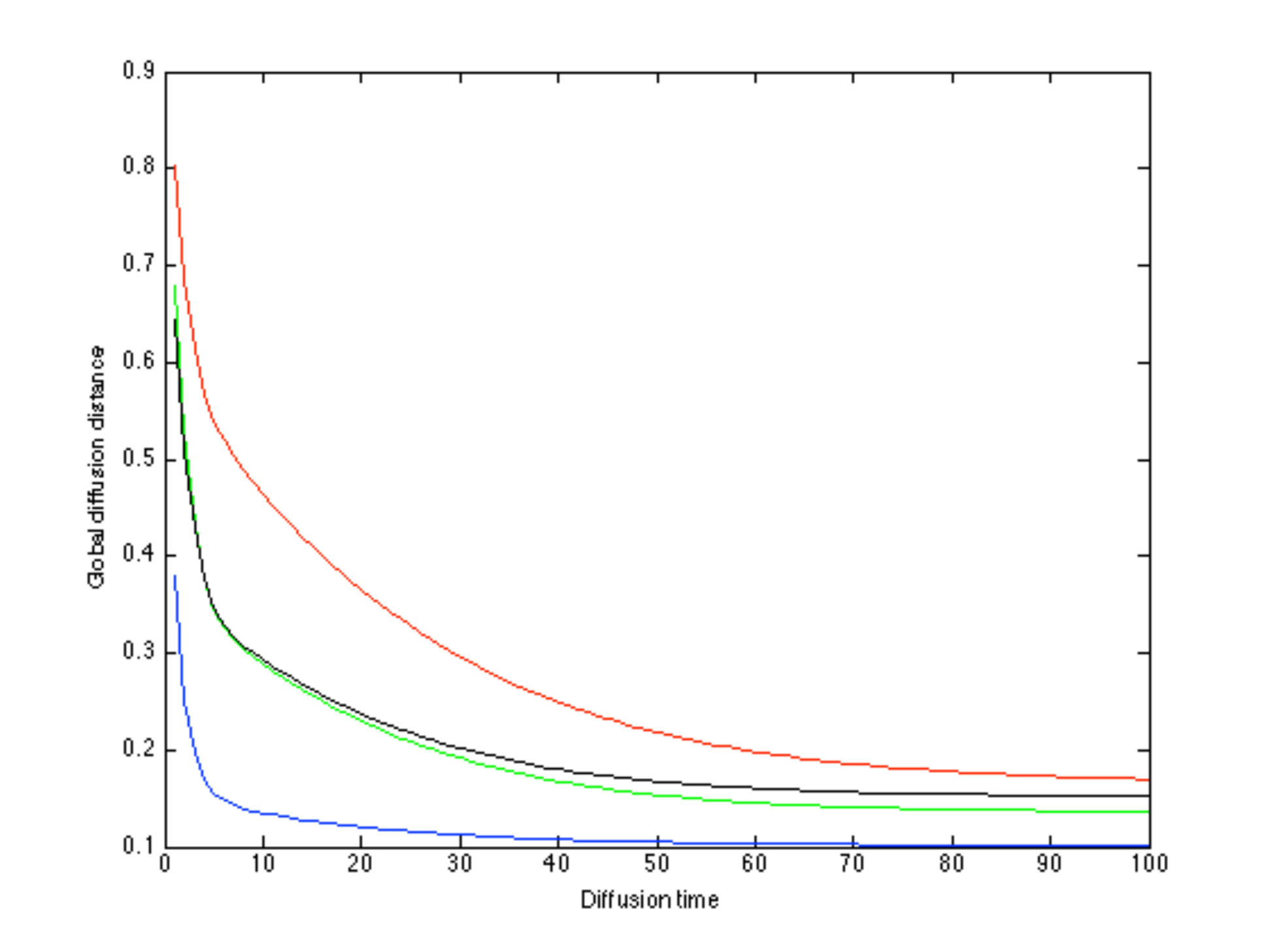}
\label{fig: global diff dist orig}
}
\subfigure[Random camera]{
\includegraphics[width=1.6in]{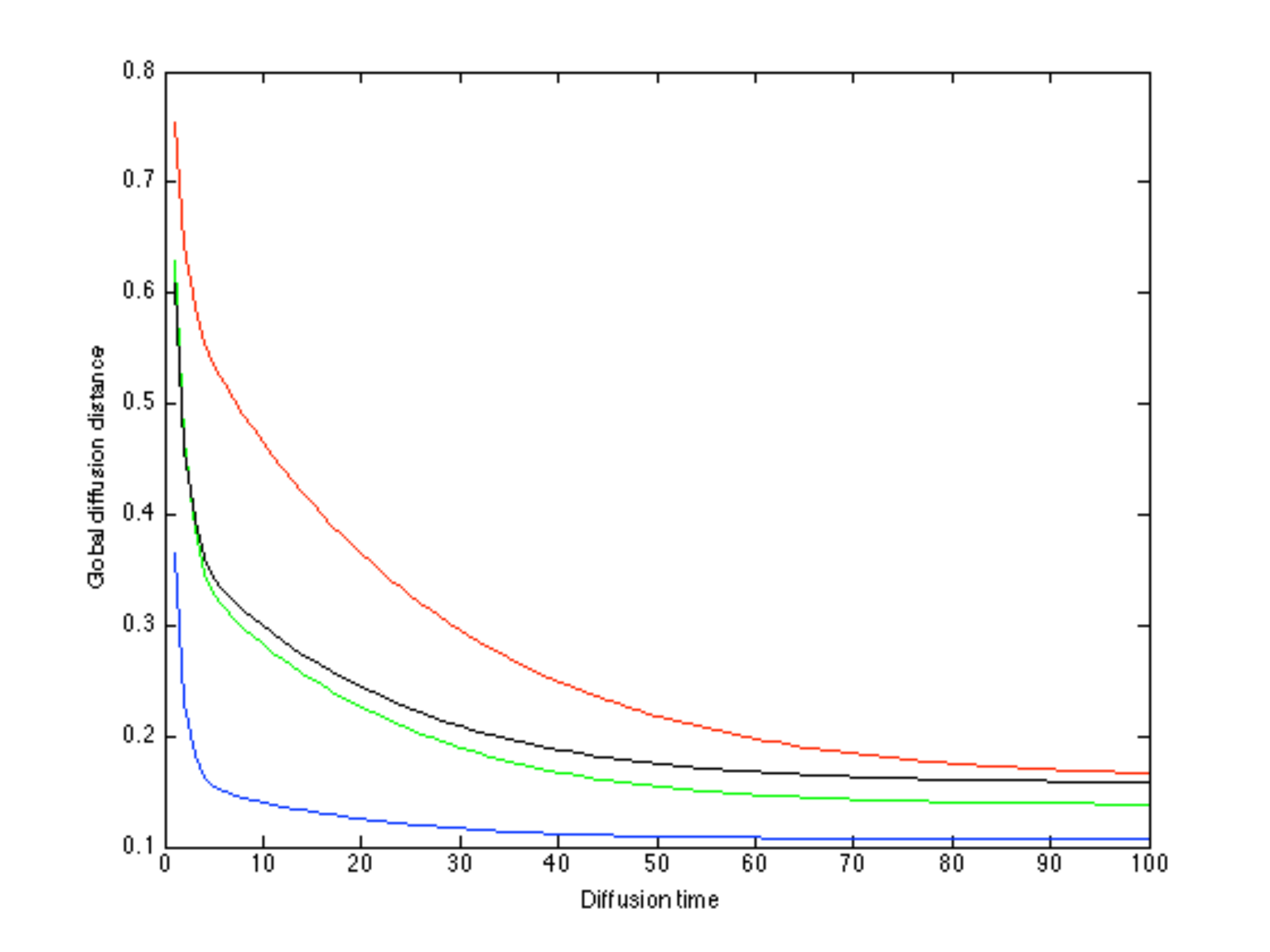}
\label{fig: global diff dist rand}
}
\subfigure[Noisy random camera]{
\includegraphics[width=1.6in]{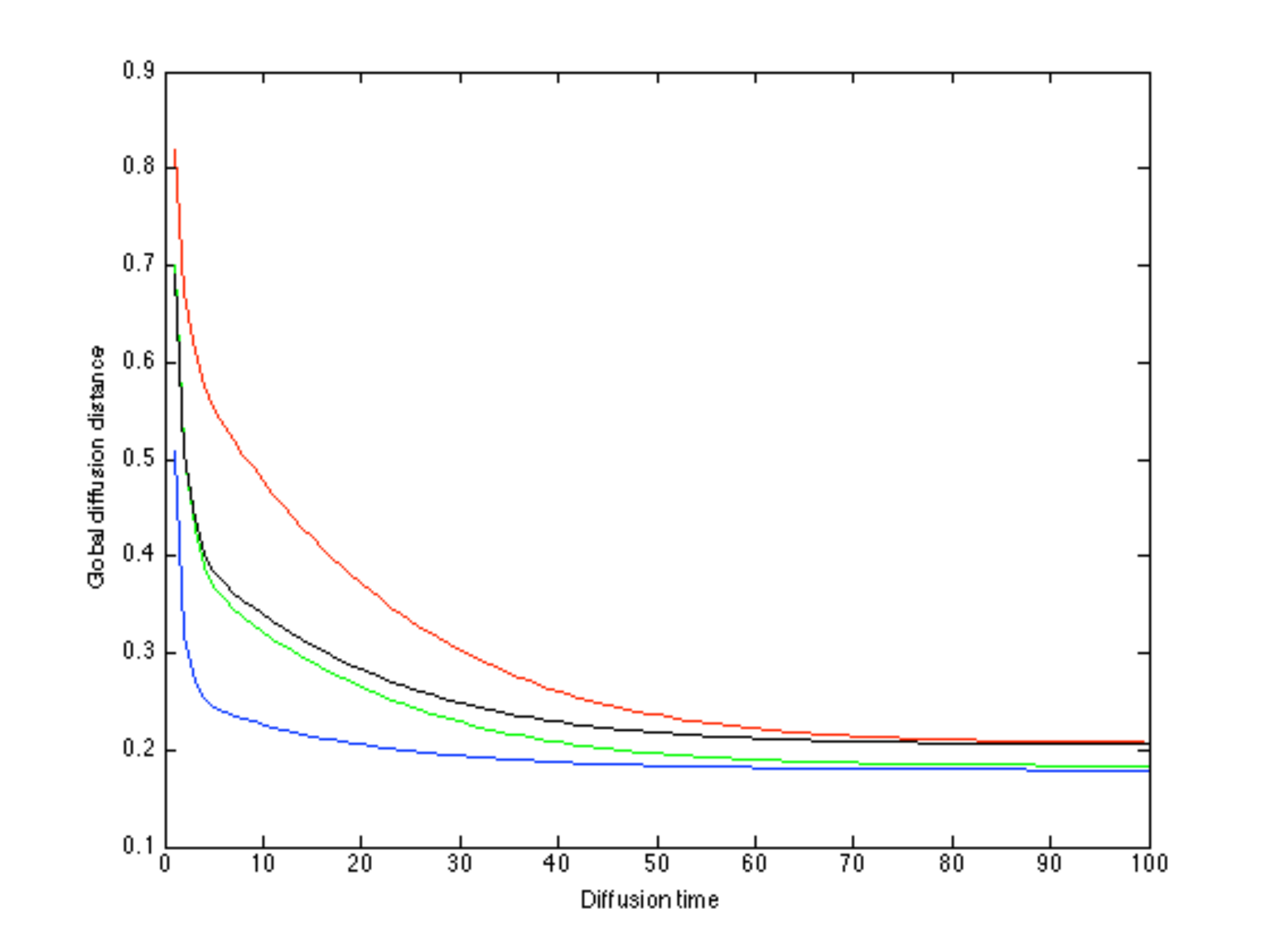}
\label{fig: global diff dist noisy rand}
}
\caption{Global diffusion distance. Red: $\Dt(\Gamma_{\text{chg}},
  \Gamma_{\text{aug}})$, green: $\Dt(\Gamma_{\text{chg}},
  \Gamma_{\text{sep}})$, blue: $\Dt(\Gamma_{\text{chg}},
  \Gamma_{\text{oct}})$, black: $\Dt(\Gamma_{\text{chg}},
  \Gamma_{\text{nov}})$}
\label{fig: global diffusion distance}
\end{figure}

While the spectra of the various months were perturbed by the changing
seasons as well as different lighting conditions, the authors of
\cite{eismann:hsiChangeDetection2008} did use
the same camera for each image so it is reasonable to assume that one
could directly compare spectra across the four months. Thus we simulated
a scenario in which different cameras were used, measuring different
wavelengths. In this test, a direct comparison becomes nearly
impossible, and so one must turn to an indirect comparison such as the
diffusion distance.

To carry out the experiment, we did the following. For each of the
five images, we randomly selected $D_{\alpha}$ bands to use out of
the original $124$ bands; we also randomly reordered each set of $D_{\alpha}$
bands. The values of $D_{\alpha}$ are the following: $D_{\text{aug}} =
30$, $D_{\text{sep}} = 40$, $D_{\text{oct}} = 60$, $D_{\text{nov}} =
70$, and $D_{\text{chg}} = 50$. Thus for this experiment, $X_{\alpha}$, for each $\alpha
\in \I$, contains data points in $\R^{D_{\alpha}}$. To see an example of these new
spectra, we refer the reader to Figure \ref{fig: random
  spectra}. Using the measurements from this ``random camera,'' we
then proceeded to carry out the experiment exactly as before,
computing the diffusion distance for $t=1$ (Figures \ref{fig: t1 aug
  rand}, \ref{fig: t1 sep rand}, \ref{fig: t1 oct rand}, \ref{fig: t1
  nov rand}), the asymptotic diffusion
distance (Figures \ref{fig: tinf aug rand}, \ref{fig: tinf sep rand},
\ref{fig: tinf oct rand}, \ref{fig: tinf nov rand}), and the global diffusion distance (Figure
\ref{fig: global diff dist rand}).

For a third and final experiment, we took the spectra from the random
camera in the previous experiment and added Gaussian noise sampled
from the normal distribution with mean zero and standard deviation
$0.01$. This gave us an average signal to noise ratio (SNR) of $19.2$ dB
(note, we compute $\text{SNR} =
10\log_{10}(\text{mean}(x^2)/\text{mean}(\eta^2))$, where $x$ is the
signal and $\eta$ is the noise). Once more we carried out the
experiment, the same as before, computing the diffusion distance for
$t=1$ (Figures \ref{fig: t1 aug noisy rand}, \ref{fig: t1 sep noisy
  rand}, \ref{fig: t1 oct noisy rand}, \ref{fig: t1 nov noisy rand}),
the asymptotic diffusion distance (Figures \ref{fig: tinf aug noisy
  rand}, \ref{fig: tinf sep noisy rand}, \ref{fig: tinf oct noisy
  rand}, \ref{fig: tinf nov noisy rand}), and the global diffusion
distance (Figure \ref{fig: global diff dist noisy rand}).

Examining Figures \ref{fig: diffusion distance t=1}, \ref{fig:
  diffusion distance t=infty}, and \ref{fig: global diffusion
  distance}, we see that the results are similar across all three
cameras (the original camera, the random camera, and the noisy random
camera). This result points to the two properties mentioned at the
beginning of this section: that the common embedding defined by
Theorem \ref{thm: common embedding} is sensor independent and robust
against noise. Thus the method is consistent under a variety of
different conditions.

In terms of the change detection task, the diffusion
distance is also accurate. For the diffusion time $t = 1$, we see from the maps in Figure
\ref{fig: diffusion distance t=1} that the tarp is recognized as a
change. However, other changes due to the lighting or the change in
seasons also appear. For example, even in October, the small change in
the shadow is visible, while in August, September, and November the
change in lighting causes the panel to be highlighted. Also, in some
months even the trees have a weak, but noticeable difference in the
their diffusion distances. When we allow $t \rightarrow \infty$
though, the smaller clusters merge together and the changes due to
lighting and seasonal differences are filtered out. As one can see
from Figure \ref{fig: diffusion distance t=infty}, all that is left is
the change due to the added tarp (note that the change around the
border of the panel is due to it being slightly shifted from month to
month). Thus we see that the diffusion distance and
corresponding diffusion map gives a
natural representation of the data that can be
used to filter types of changes at different scales. In practice,
after these mappings and distances have been computed, the images can
be handed off to an analyst who should be able to pick out the changes
with ease; alternatively, a classification algorithm can be used on
the backend (for example, one that looks for diffusion distances
across images that are larger than a certain prescribed scale). 

For the global diffusion distances in Figure \ref{fig: global
  diffusion distance}, we see several intuitions borne
out in this particular application. First, the closer the month in
real time to October (the month in which the changed data set was
recorded), the smaller the global diffusion distance. Secondly, we see
that as the diffusion time $t$ gets larger, the smaller the global
diffusion distance. 

\subsection{Parameterized difference equations}

In this section we consider discrete time dynamical systems
(difference equations) that depend on input parameters. The idea is to
use the diffusion geometric principles outlined in this paper to
understand how the geometry of the system changes as one changes the
parameters of the system.

To illustrate the idea we use the following example of the Standard
Map, first brought to our attention by Igor Mezi\'{c} and Roy Lederman (personal
correspondence). The Standard Map is an area preserving chaotic map
from the torus $\T^2 \triangleq 2\pi (S^1 \times S^1)$ onto
itself. Let $(p, \theta) \in \T^2$ denote an arbitrary coordinate of
the torus. For any initial condition $(p_0, \theta_0) \in \T^2$, the
Standard Map is defined by the following two equations:
\begin{align*}
p_{\ell+1} &\triangleq p_{\ell} + \alpha\sin(\theta_{\ell}) \text{ mod
  } 2\pi, \\
\theta_{\ell+1} &\triangleq \theta_{\ell} + p_{\ell+1} \text{ mod } 2\pi,
\end{align*}
where $\alpha \in \I = [0,\infty)$ is a parameter, $\ell \in
\N \cup \{0\}$, and $(p_{\ell}, \theta_{\ell}) \in \T^2$ for all $\ell
\geq 0$. The sequence of points $\gamma(p_0,\theta_0) \triangleq \{(p_{\ell},\theta_{\ell})\}_{\ell
  \geq 0}$ constitutes the orbit derived from the initial condition
$(p_0,\theta_0)$. When $\alpha = 0$, the Standard Map consists solely
of periodic and quasiperiodic orbits. For $\alpha > 0$, the map is is increasingly nonlinear as
$\alpha$ grows, which in turn increases the number of initial
conditions that lead to chaotic dynamics.

We take the data set $X_{\alpha}$ to be the set of orbits of the
Standard Map for the parameter $\alpha$. Using the ideas developed in
\cite{levnajic:ergodicTheoryVisualI, levnajic:ergodicTheoryVisualII},
it is possible to define a kernel $k_{\alpha}$ that acts on this data
set. One can in turn use this kernel to define a diffusion map on the
orbits. For the purposes of this experiment, we discretize the orbits
by selecting a grid of initial conditions on $\T^2$ and let the system
run forward a prescribed number of time steps. An example for small
$\alpha$ is given in Figure \ref{fig: diffmap and standard
  map}. Notice how the diffusion map embedding into $\R^3$ organizes
the Standard Map according to the geometry of the orbits. 

For each $\alpha \in [0, \infty)$, we have a similar embedding. Using the ideas contained in
Section \ref{sec: graph distance} (in particular Theorem \ref{thm:
  common embedding}), it is possible to map each
embedding, for all $\alpha \in \I$, into a single low dimensional
Euclidean space. Doing so allows one to observe how the geometry of
the system changes as the parameter $\alpha$ is increased; see Figure
\ref{fig: common embedding standard map} for more details. In the forthcoming paper
\cite{coifman:diffMapStdMap2012}, we give a full treatment of these
ideas. 

\begin{figure}
\center
\includegraphics[width=4.7in]{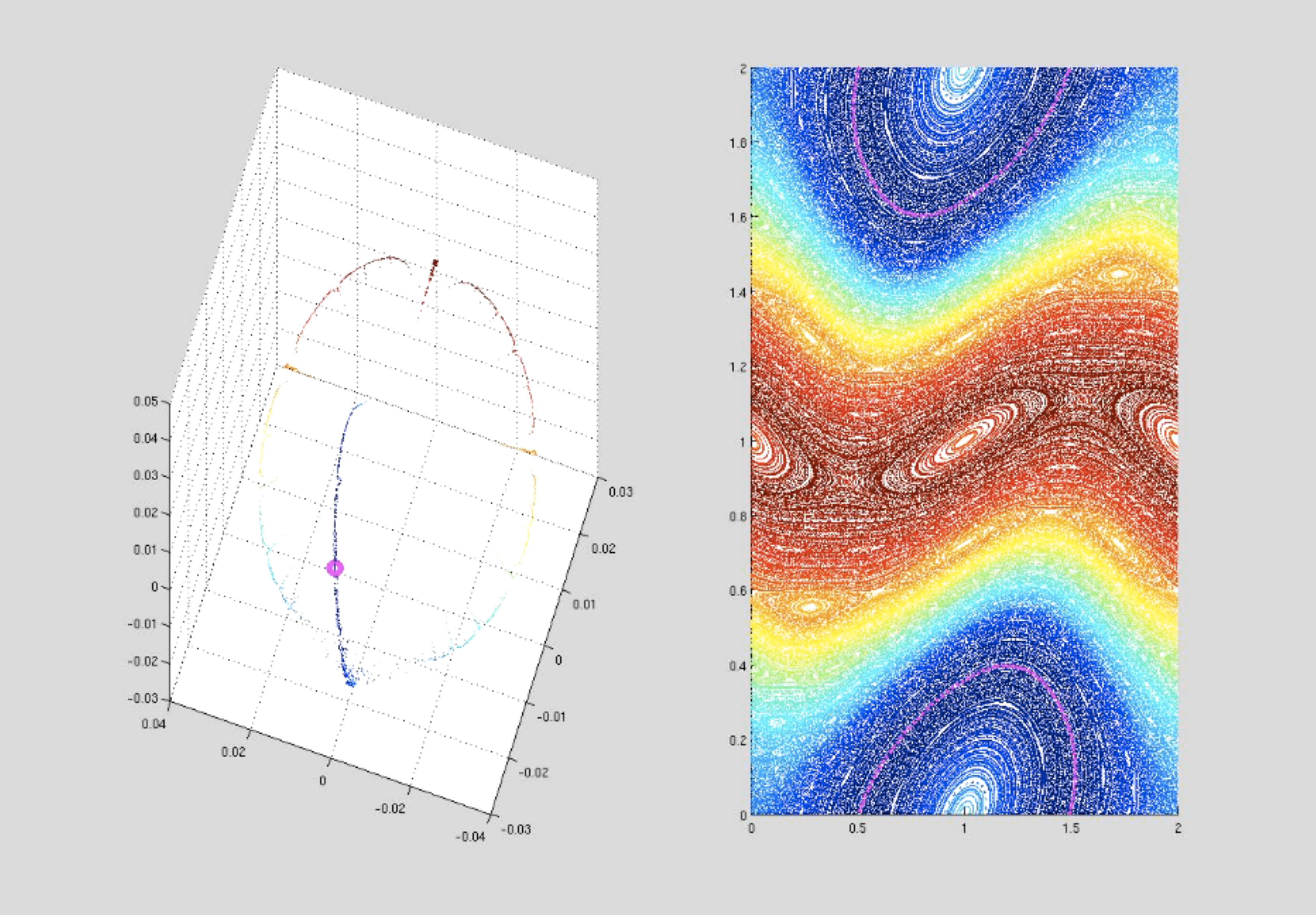}
\caption{Diffusion map of the orbits of the Standard Map for a small $\alpha$. The color of the embedded point on the left corresponds to the orbit of the same color on the right. A particular embedded point and orbit are highlighted in purple.}
\label{fig: diffmap and standard map}
\end{figure}

\begin{figure}
\center
\includegraphics[width=4.0in]{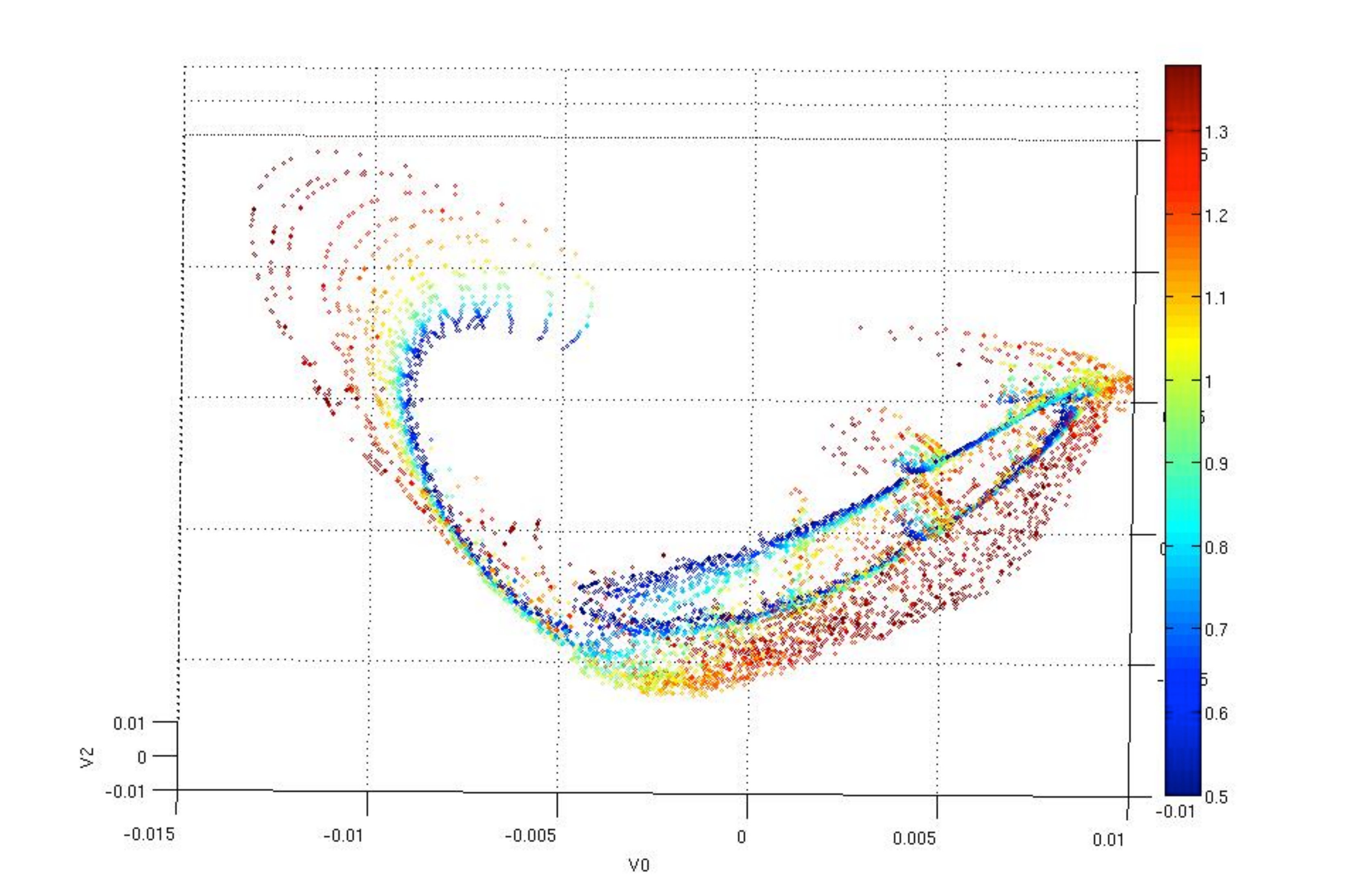}
\caption{Common diffusion embedding of the orbits of the Standard Map across several values of $\alpha$. The color of the embedded point indicates the value of $\alpha$ used in the Standard Map. Notice, in particular, that many of the periodic and quasiperiodic orbits for low values of $\alpha$ that are embedded into the central ring of the embedding turn into chaotic orbits for higher values of $\alpha$. This in turn is realized by the diffusion map as the embedding has less structure.}
\label{fig: common embedding standard map}
\end{figure}

\subsection{Global embeddings}

In this section we seek to illustrate how the global diffusion
distance can be used to recover the parameters governing the global
geometrical behavior of $X_{\alpha}$ as $\alpha$ ranges over $\I$. As an example,
we shall take a torus that is being deformed according to two
parameters:
\begin{enumerate}
\item
The location of the deformation, which in this case is a pinch
(imagine squeezing the torus at a certain spot).
\item
The strength of the deformation, i.e., how hard we pinch the torus.
\end{enumerate}
Let $\I = \{0, 1, \ldots, 30\}$. $X_0$ shall be the standard torus
with no pinch; for $1 \leq \alpha \leq 30$, $X_{\alpha}$ will
have a pinch at a prescribed location on the torus with a prescribed
strength. For an image of the standard torus as well as a pinched
torus, see Figure \ref{fig: torii}.

\begin{figure}
\center
\subfigure[Regular torus]{
\includegraphics[width=2.2in]{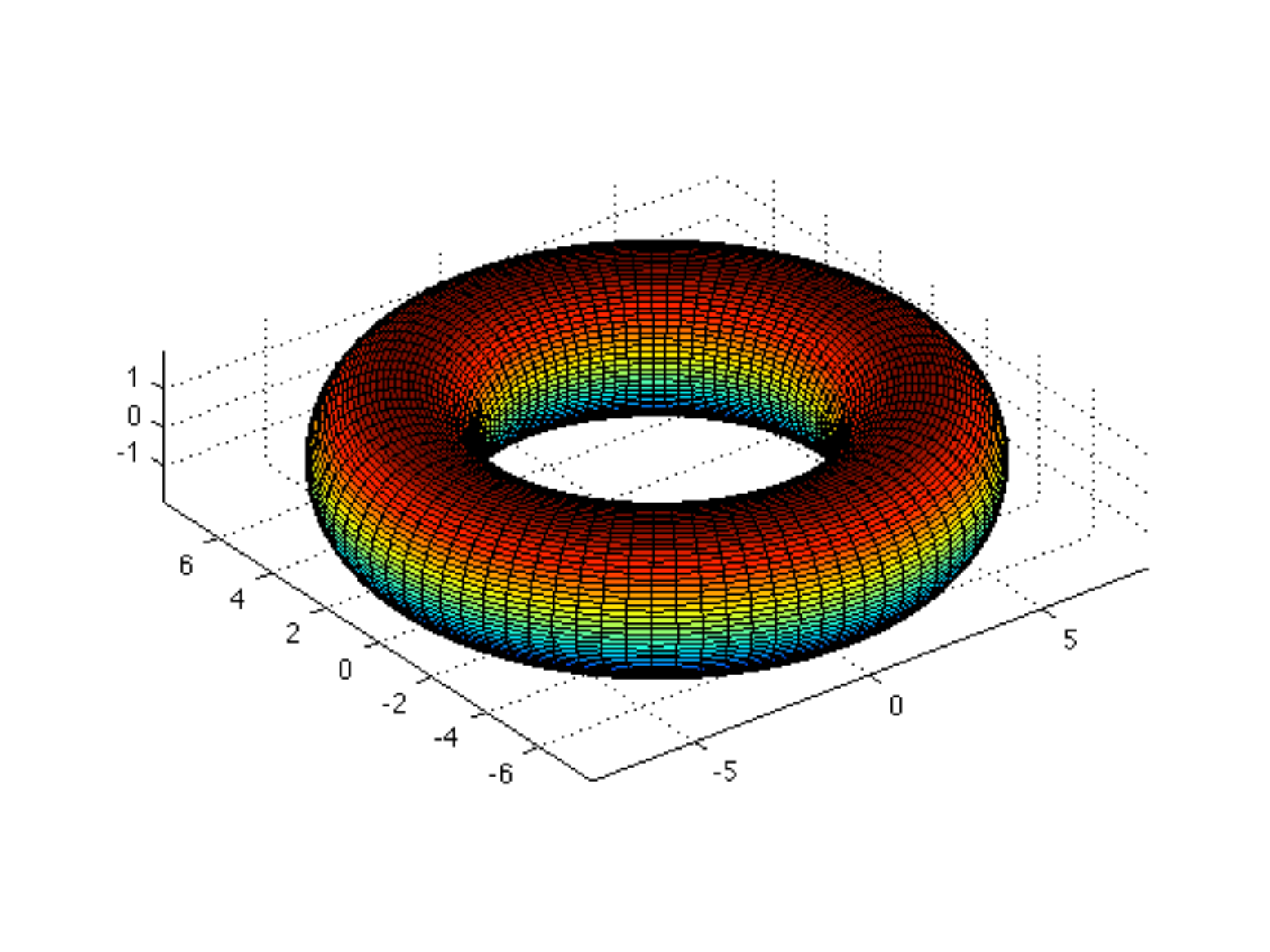}
\label{fig: regular torus model}
}
\subfigure[Pinched torus]{
\includegraphics[width=2.2in]{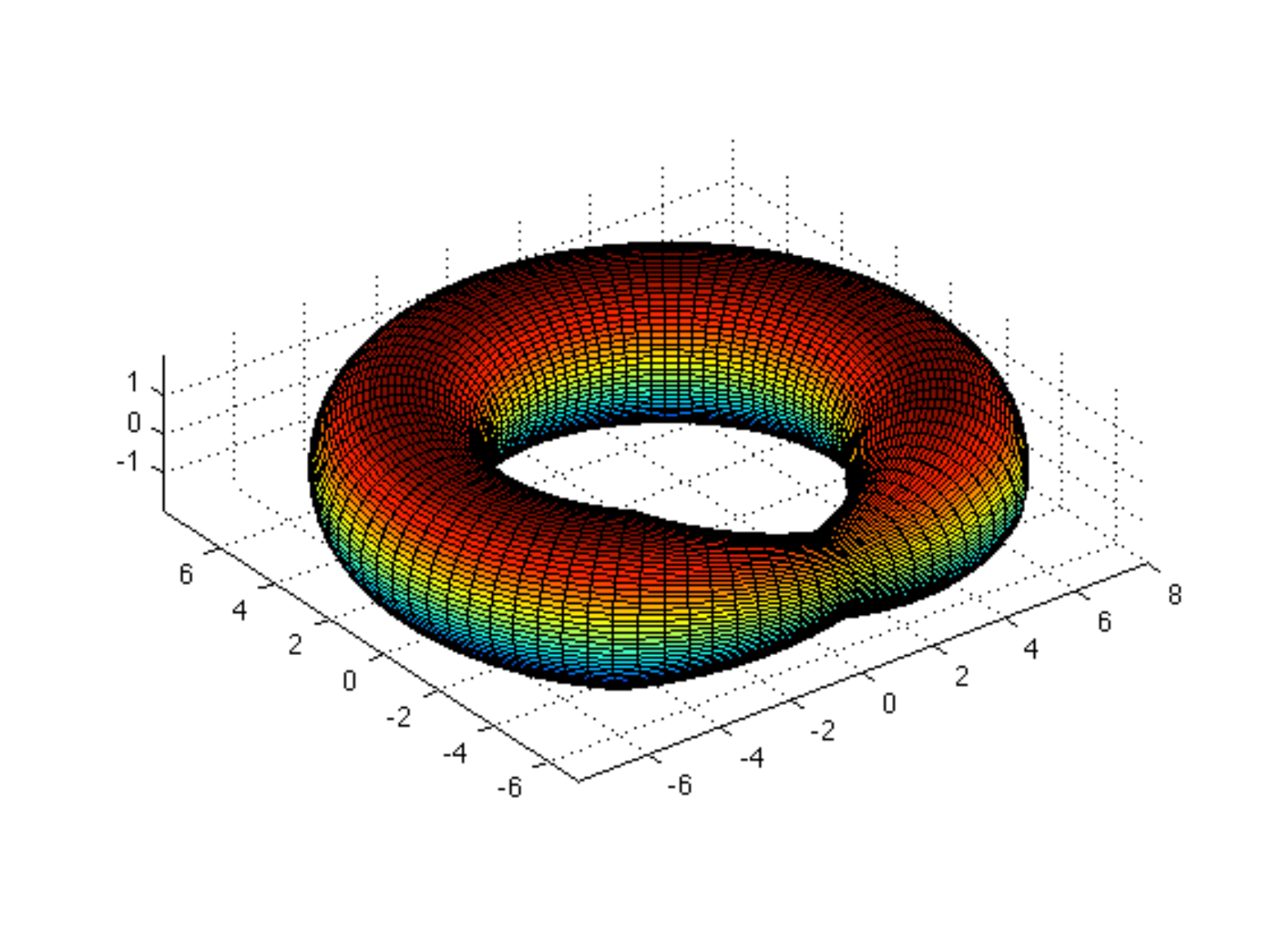}
\label{fig: pinched torus model}
}
\caption{Regular and pinched torii}
\label{fig: torii}
\end{figure}

More specifically, we take $X_0$ to be a torus with a central radius of six and a lateral radius of
two, i.e., $X \triangleq (6S^1) \times (2S^1)$. We
assume that the central circle $6S^1$ and the lateral circle $2S^1$
are oriented, so that each point on the torus has a specific
coordinate location (note that while $X_0 \subset \R^3$, the points of
the torus have a two dimensional coordinate system consisting of two
angles, one for the central circle and one for the lateral circle).

From $X_0$ we build a family of ``pinched'' torii as follows. We pick an
angle on the central circle $6S^1$, say $\theta_0$, and we pinch
the torus at $\theta_0$ so that its lateral radius at this angle is
now $r_0$, where $r_0 < 2$. So that we do not rip the torus, from a
starting angle $\theta_s$, the lateral radius will decrease linearly
from $2$ at $\theta_s$ to $r_0$ at $\theta_0$, and then increase
linearly from $r_0$ at $\theta_0$ back to $2$ at some ending angle
$\theta_e$. The lateral radius of this new torus will be $2$ at all
other angles on the central circle. This is how Figure
\ref{fig: pinched torus model} was constructed. 

We create several pinched torii as follows. We take three different
angles to pinch the torus at: $\theta_0 = \pi/2, \pi$, and
$3\pi/2$. At each of these three angles, we pinch the torus so that
the lateral radius $r_0$ at $\theta_0$ can take one of ten values:
$r_0 = 1, 1.1, 1.2, \ldots, 1.9$. The starting and ending angles for
each pinch are offset from $\theta_0$ by $\pi/4$ radians, so that $\theta_s =
\theta_0 - \pi/4$ and $\theta_e = \theta_0 + \pi/4$. Thus we have
$30$ different pinched torii, which along with the original torus,
gives us a family of $31$ torii.

In order to recover the two global parameters of the family of torii, we use the global
diffusion distance to compute a ``graph of graphs.'' By this
we mean the following: Let $\G \triangleq \{ \Gamma_{\alpha}
\}_{\alpha \in \I}$ be our family of graphs. We can compute a new graph $\Omega_t \triangleq ( \{
\Gamma_{\alpha} \}_{\alpha \in \I}, \overline{k}_t)$, in which $\G$
are the vertices of
$\Omega_t$ and the kernel $\overline{k}_t: \G \times \G \rightarrow \R$ is a function of the
global diffusion distance $\Dt$. One natural way to define $\overline{k}_t$ is
via Gaussian weights:
\begin{equation} \label{eqn: meta kernel}
\overline{k}_t(\Gamma_{\alpha}, \Gamma_{\beta}) \triangleq
e^{-\Dt(\Gamma_{\alpha}, \Gamma_{\beta})^2/\varepsilon^2}, \quad
\text{for all } \alpha, \beta \in \I.
\end{equation}
Note that for each diffusion time $t$, we have a different kernel
$\overline{k}_t$ which results in a different graph $\Omega_t$. Fixing a
specific, but arbitrary diffusion time $t$, one can in turn construct
a new diffusion operator on the graph $\Omega_t$ by using $\overline{k}_t$ as
the underlying kernel. For example, if $\I $ is finite and we let
$\overline{m}_t: \G \rightarrow \R$ be the density of $\overline{k}_t$,
where
\begin{equation*}
\overline{m}_t (\Gamma_{\alpha}) \triangleq \sum_{\beta \in \I} \overline{k}_t
(\Gamma_{\alpha}, \Gamma_{\beta}), \quad \text{for all } \alpha \in \I,
\end{equation*}
then the corresponding symmetric diffusion kernel $\overline{a}_t: \G
\times \G \rightarrow \R$ would be defined as
\begin{equation*}
\overline{a}_t(\Gamma_{\alpha}, \Gamma_{\beta}) \triangleq
\frac{\overline{k}_t(\Gamma_{\alpha},
  \Gamma_{\beta})}{\sqrt{\overline{m}_t(\Gamma_{\alpha})}
  \sqrt{\overline{m}_t(\Gamma_{\beta})}}, \quad \text{for all }
\alpha, \beta \in \I.
\end{equation*}
Since we are assuming $\I$ is finite, one can think of $\overline{a}_t$ as an
$|\I| \times |\I|$ matrix, and one can compute the eigenvectors and
eigenvalues of $\overline{a}_t$.  This gives us a diffusion map of the form
\begin{equation*}
\overline{\Psi}_t^{(s)}: \G \rightarrow \R^d,
\end{equation*}
where $s$ is the diffusion time for the graph of graphs. This diffusion embedding can then be
used to cluster the family of graphs $\G$, treating each graph
$\Gamma_{\alpha} \in \G$ as a single data point.

Our goal is to build a graph of graphs in which each vertex is one of the $31$ torii. To
do so we approximate the global diffusion distance between each pair
of torii by taking $7744$ random samples from $X_0$ (using the uniform
distribution), and then using the same corresponding samples for each
pinched torus. For each torus we used a Gaussian kernel of the form 
\begin{equation*}
k_{\alpha}(x,y) = e^{-\|x-y\|^2/\varepsilon (\alpha)^2}, \quad
\text{for all } \alpha \in \I,
\end{equation*}
where $\varepsilon (\alpha)$ was selected so that the corresponding symmetric
diffusion operator (matrix) $A_{\alpha}$ would have second eigenvalue
$\lambda_{\alpha}^{(2)} = 0.5$. The pairwise global diffusion distance
was further approximated by taking the top ten eigenvalues and
eigenvectors of each of the $31$ diffusion operators, and was then
computed for diffusion time $t=2$ using Theorem \ref{thm: simplified
  global diff dist}. Two remarks: first, the diffusion time
$t=2=1/(1-\lambda_{\alpha}^{(2)})$ corresponds to the approximate time
it would take for the diffusion process to spread through each of the
graphs; secondly, by Theorem \ref{thm: global diff dist sample}, this
approximate global diffusion distance is, with high probability,
nearly equal to the true global diffusion distance between each of the
torii.

After computing the pairwise global diffusion distances, we
constructed the kernel $\overline{k}_t$, for $t=2$, defined in equation \eqref{eqn: meta
  kernel}. We took $\varepsilon$ in this kernel to be the median of
all pairwise global diffusion distances between the $31$ torii. We
then computed the symmetric diffusion operator for this graph of graphs, which
turned out to have second eigenvalue $\lambda^{(2)} \approx 0.48$. We took
the top three eigenvalues and eigenvectors of the diffusion operator,
and used them to compute the diffusion map into $\R^3$ at diffusion
time $s \approx 1/(1-0.48) = 1.92$. 

A plot of this diffusion map is
given in Figure \ref{fig: graph of graphs}. The central, dark blue,
circle corresponds to the regular torus in both images. In Figure
\ref{fig: torus angle}, the other three colors
correspond to the angle at which the torus was pinched. In Figure
\ref{fig: torus str}, the colors correspond to the strength of the
pinch (dark blue - no pinch, dark red - strongest pinch). As one can
see, the diffusion embedding organizes the torii by both the location
of the pinch (i.e. what arc the embedded torus lies on), and the
strength of the pinch (i.e. how far from the regular torus each
pinched torus lies), giving a global view of how the data set changes
over the parameter space.
\begin{figure}
\center
\subfigure[Colored by location of pinch. Each color corresponds to one
of the angles at which the pinch occurs.]{
\label{fig: torus angle}
\includegraphics[width=2.5in]{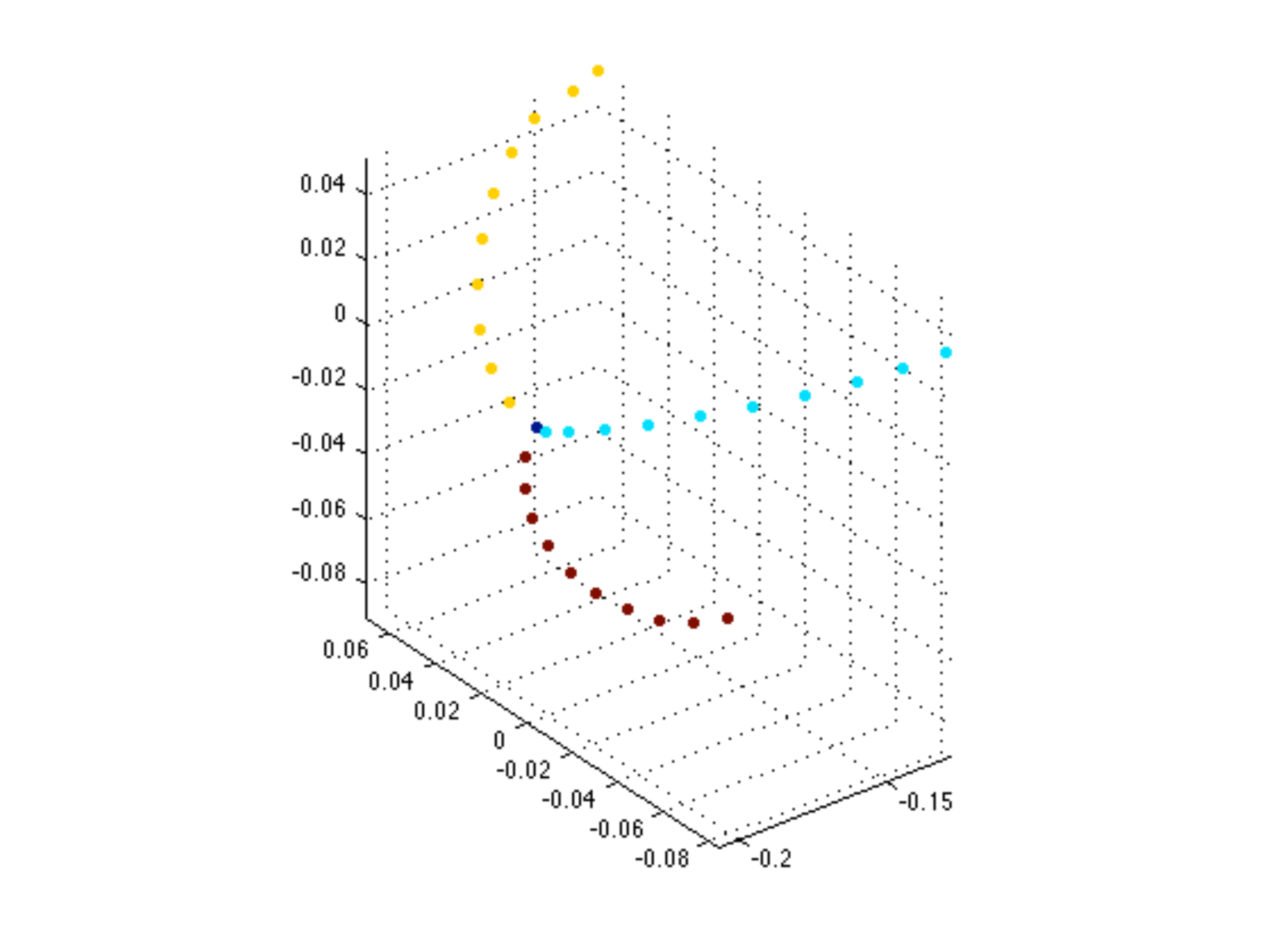}
}
\subfigure[Colored by strength of pinch. Dark blue indicates no pinch,
followed by light blue, green, yellow, orange, and finally dark red
which indicates the strongest pinch.]{
\label{fig: torus str}
\includegraphics[width=2.5in]{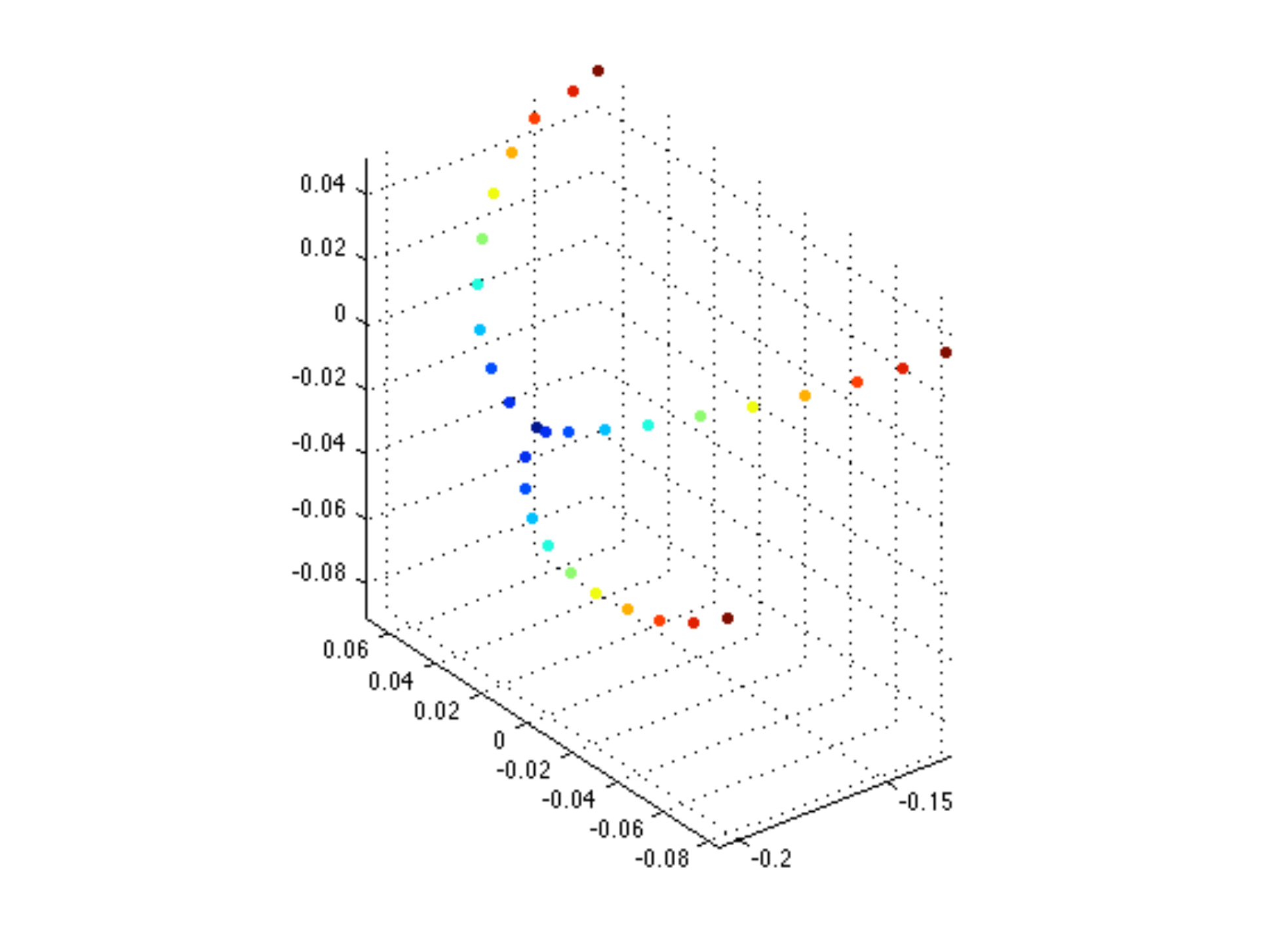}
}
\caption{Diffusion embedding of the $31$ torii. Each data point
  corresponds to a torus. The embedding organizes the torii according
  to the two parameters governing the global geometrical behavior of
  the data over the parameter space.}
\label{fig: graph of graphs}
\end{figure}

\section{Conclusion} \label{sec: conclusion}

In this paper we have generalized the diffusion distance to work on a
changing graph. This new distance, along with the corresponding
diffusion maps, allow one to understand how the intrinsic geometry of
the data set changes over the parameter space. We have also
defined a global diffusion distance between graphs, and used this to
construct meta graphs in which each vertex of the meta graph
corresponds to a graph. Formulas for each of these diffusion distances
in terms of the spectral decompositions of the relevant diffusion
operators have been proven, giving a simple and efficient way to
approximate these diffusion distances. Finally, it was shown that a
random, finite sample of data points from a continuous, changing data
set $X$ is, with high probability, enough to approximate the diffusion
distance and the global diffusion distance to high accuracy.

Future work could include generalizing these notions of diffusion
distance further so that they can apply to sequences of graphs in which there is no
bijective correspondence between the graphs (beyond the simple
generalization of \ref{sec: non-bijective correspondence}). Also, it would be
interesting to investigate how this work fits in with the recent
research on vectorized diffusion operators contained in
\cite{singer:vectorDiffMaps2011, wolf:linearProjDiff2011}.

\section{Acknowledgements}

This research was supported by Air Force Office of Scientific Research
STTR FA9550-10-C-0134 and by Army Research Office MURI
W911NF-09-1-0383. We would also like to thank the anonymous reviewers
for their extremely helpful comments and suggestions, which greatly
improved this paper.

\begin{appendix}

\section{Non-bijective correspondence} \label{sec: non-bijective correspondence}

In this appendix we consider the case in which our changing data set
does not have a single bijective correspondence across the parameter
set $\I$. We make a few small changes to the notation. Continue to let $\I$
denote the parameter space, but let $(\X,\mu)$ denote a ``global''
measure space. Our changing data is given by
$\{X_{\alpha}\}_{\alpha \in \I}$ with data points $x_{\alpha} \in X_{\alpha}$, and satisfies
\begin{equation*}
X_{\alpha} \subseteq \X, \quad \text{for all } \alpha \in \I.
\end{equation*}
We assume that each data set $X_{\alpha}$ is a measurable set under
$\mu$. Suppose, additionally, that there exists a sufficiently large
set $S \subset \X$ such that
\begin{equation*}
S \subset X_{\alpha}, \quad \text{for all } \alpha \in \I.
\end{equation*}
We maintain the remaining notations and assumptions from Section
\ref{sec: graph distance}, and simply update them to apply for each
$X_{\alpha}$. In particular, for each $\alpha \in \I$, we have the
symmetric diffusion kernel $a_{\alpha}: X_{\alpha} \times X_{\alpha}
\rightarrow \R$, with corresponding trace class operator $A_{\alpha}:
L^2(X_{\alpha},\mu) \rightarrow L^2(X_{\alpha},\mu)$. The set of
functions $\{\psi_{\alpha}^{(i)}\}_{i \geq 1} \subset
L^2(X_{\alpha},\mu)$ still denote a set of orthonormal eigenfunctions
for $A_{\alpha}$, with corresponding eigenvalues $\{
\lambda_{\alpha}^{(i)} \}_{i \geq 1}$. The diffusion map is still
given by $\Psi_{\alpha}^{(t)}: X_{\alpha} \rightarrow \ell^2$, with
$\Psi_{\alpha}^{(t)}(x_{\alpha}) = \left(\left(\lambda_{\alpha}^{(i)}\right)^t
\psi_{\alpha}^{(i)}(x_{\alpha}) \right)_{i \geq 1}$.

Under this more general setup, for any $\alpha,\beta \in \I$, the sets
$X_{\alpha} \setminus X_{\beta}$ and $X_{\beta} \setminus X_{\alpha}$
may be nonempty. Thus it is not possible to compare the diffusions on
$\Gamma_{\alpha}$ and $\Gamma_{\beta}$ as they spread through each
graph. On the other hand, since we have a common set $S \subset
X_{\alpha} \cap X_{\beta}$, we can compare the diffusion centered at
$x_{\alpha} \in X_{\alpha}$ with the diffusion centered at $y_{\beta}
\in X_{\beta}$ as they spread through the subgraphs of
$\Gamma_{\alpha}$ and $\Gamma_{\beta}$ with common vertices
$S$. Formally, we define this diffusion distance as:
\begin{equation*}
D^{(t)}(x_{\alpha},y_{\beta};S)^2 \triangleq \int\limits_S \left(
  a_{\alpha}^{(t)}(x_{\alpha},s) - a_{\beta}^{(t)}(y_{\beta},s)
\right)^2 \, d\mu(s), \quad \text{for all } \alpha, \beta \in \I,
\enspace (x_{\alpha},y_{\beta}) \in X_{\alpha} \times X_{\beta}.
\end{equation*}

A result similar to Theorem \ref{thm: common embedding} can be had for
this subgraph diffusion distance. Since the eigenfunctions for
$A_{\alpha}$ will not be orthonormal when restricted to $L^2(S,\mu)$,
one must use an additional orthonormal basis $\{e^{(i)}\}_{i \geq 1}$
for $L^2(S,\mu)$ when rotating the diffusion maps
across $\I$ into a common embedding. In particular, we define a new
family of rotation maps $O_{\alpha,S}: \ell^2 \rightarrow \ell^2$ as:
\begin{equation*}
O_{\alpha,S}v \triangleq \left( \sum_{j \geq 1} v[j] \, \langle e^{(i)},
  \psi_{\alpha}^{(j)} \rangle_{L^2(S,\mu)} \right)_{i \geq 1}.
\end{equation*}
Using these rotation maps, along with the same ideas from Section
\ref{sec: graph distance}, one can show:
\begin{equation*}
D^{(t)}(x_{\alpha},y_{\beta};S) = \left\| O_{\alpha,S}
  \Psi_{\alpha}^{(t)}(x_{\alpha}) - O_{\beta,S}
  \Psi_{\beta}^{(t)}(y_{\beta}) \right\|_{\ell^2}, \quad \text{with
  convergence in } L^2(X_{\alpha} \times X_{\beta}, \mu \otimes \mu).
\end{equation*}

\begin{remark}
Analogously to Remark \ref{rem: gamma choice}, one should be careful
when choosing the basis
$\{e^{(i)}\}_{i \geq 1}$ for $L^2(S,\mu)$. Ideally it will depend on
the desired application, and can thus prioritize certain features in
the data.
\end{remark}

\section{Proof of random sampling theorems} \label{sec:
  proof of finite graph approximation estimate}

In this appendix we prove the random sampling Theorems \ref{thm:
  ptwise diff dist sample} and \ref{thm: global diff dist sample}
from Section \ref{sec: convergence of finite
  approximations}. Throughout the appendix we shall assume that
$(X,\mu)$ and $\{k_{\alpha}\}_{\alpha \in \I}$ satisfy Assumption
\ref{assumption 1.5}. 

The proof shall rely upon a result from
\cite{devito:learningExamples2005} as well as several results on the
asymmetric graph Laplacian $I - P$ that are contained in
\cite{rosasco:learningIntegralOps2010}. All of these results are
easily translated for our family of operators $\{A_{\alpha}\}_{\alpha
  \in \I}$, and we shall simply restate the needed results from
\cite{rosasco:learningIntegralOps2010} in these terms. 

\subsection{Reproducing kernel Hilbert spaces} \label{sec: RKHS}

Critical to our analysis will the be existence of a single reproducing kernel
Hilbert space (RKHS) that contains the set of kernels $\{ a_{\alpha} \}_{\alpha
  \in \I}$, their empirical approximations, and related functions. In
\cite{rosasco:learningIntegralOps2010} such a RKHS is
constructed. Here we recall the definition of a RKHS as well the
aforementioned construction. 

A set $\HH$ is a RKHS \cite{aronszajn:rkhs1950} if it is a Hilbert
space of functions $f: X \rightarrow
\R$ such that for each $x \in X$, there exists a constant $C(x)$ so
that
\begin{equation*}
f(x) \leq C(x) \, \| f \|_{\HH}.
\end{equation*}
The name RKHS comes from the fact that one can show that there is a
unique symmetric, positive definite kernel $h: X \times X \rightarrow
\R$ associated with $\HH$ such that for each $f \in \HH$, 
\begin{equation*}
f(x) = \langle f, h(x,\cdot) \rangle_{\HH}, \quad \text{for all } x
\in X.
\end{equation*}

We utilize a specific RKHS first presented in
\cite{rosasco:learningIntegralOps2010}; the construction is rewritten
here for completeness. Let $l$ be a positive integer, and define the
Sobolev space $\HH^l$ as
\begin{equation*}
\HH^l \triangleq \{ f \in L^2(X, dx) : D^{\gamma} f \in L^2(X, dx) \text{ for
  all } |\gamma| = l \},
\end{equation*} 
where $D^{\gamma} f$ is the weak derivative of $f$ with respect to the
multi-index $\gamma \triangleq (\gamma_1, \ldots, \gamma_d) \in \mathbb{N}^d$,
$|\gamma| \triangleq \gamma_1 + \cdots + \gamma_d$, and $dx$ denotes
the Lebesgue measure. The space $\HH^l$ is a separable Hilbert space with scalar
product
\begin{equation*}
\langle f, g \rangle_{\HH^l} \triangleq \langle f, g \rangle_{L^2(X, dx)} +
\sum_{|\gamma| = l} \langle D^{\gamma} f, D^{\gamma} g \rangle_{L^2(X, dx)}.
\end{equation*}
Also note that the space $C_b^l(X)$ is a Banach space with respect to
the norm
\begin{equation*}
\|f\|_{C_b^l(X)} \triangleq \sup_{x \in X}|f(x)| + \sum_{|\gamma|=l}
\sup_{x \in X} |D^{\gamma}f(x)|.
\end{equation*}
As explained in \cite{rosasco:learningIntegralOps2010}, since $X$ is
bounded, we have $C_b^l(X) \subset \HH^l$ and $\|f\|_{\HH^l} \leq
C(l)\|f\|_{C_b^l(X)}$. Via Corollary $21$ of section $4.6$ from
\cite{burenkov:sobolevSpacesDomains1998}, if $m \in \N$ and $l-m >
d/2$, then we also have:
\begin{equation} \label{eqn: sobolev embedding}
\HH^l \subset C_b^m(X) \quad \text{and} \quad \|f\|_{C_b^m(X)} \leq
C(l,m) \, \|f\|_{\HH^l}.
\end{equation}
Following \cite{rosasco:learningIntegralOps2010}, if one takes $s \triangleq
\lfloor d/2 \rfloor + 1$, then using \eqref{eqn: sobolev embedding}
with $l = s$ and $m = 0$ we see that $\HH^s$ is a RKHS with a
continuous, real valued, bounded kernel $h_s$.

\subsection{Additional operators} \label{sec:
  additional operators and previous results}

In this section we define several operators that will bridge the gap
between the matrix $\A_{\alpha}$ and the operator $A_{\alpha}$. All of
these definitions are based on those from
\cite{rosasco:learningIntegralOps2010} for the asymmetrical diffusion operators
(i.e. $P$). To start, define the empirical density maps
$m_{\alpha,n}: X \rightarrow \R$ in terms of the samples $X_n = \{x^{(1)},
\ldots, x^{(n)}\}$ as  
\begin{equation*}
m_{\alpha,n}(x) \triangleq \frac{1}{n} \sum_{i=1}^n k_\alpha(x, x^{(i)}), \quad \text{for all
} \alpha \in \I, \enspace x \in X.
\end{equation*}
Note that $m_{\alpha,n}(x^{(i)}) = \D_\alpha[i,i]$. We also define the
empirical kernels $a_{\alpha,n}: X \times X \rightarrow \R$ as
\begin{equation*}
a_{\alpha,n} (x,y) \triangleq \frac{k_\alpha(x,y)}{\sqrt{m_{\alpha,n}(x)} \sqrt{m_{\alpha,n}(y)}},
\quad \text{for all } \alpha \in \I, \enspace x,y \in X.
\end{equation*}
We then have the following lemma from
\cite{rosasco:learningIntegralOps2010}, adapted for symmetric
diffusion operators.
\begin{lemma}[Lemma $16$ from
  \cite{rosasco:learningIntegralOps2010}] \label{lem: fcn bounds}
Assume that $(X,\mu)$ and $\{k_{\alpha}\}_{\alpha \in \I}$ satisfy the
conditions of Assumption \ref{assumption 1.5}. Then, for all $\alpha
\in \I$ and for all $x \in X$,
\begin{equation*}
k_{\alpha}(x,\cdot), m_{\alpha}, m_{\alpha,n}, \frac{1}{m_{\alpha}},
\frac{1}{m_{\alpha,n}} \in C_b^{d+1}(X) \subset \HH^{d+1} \subset
\HH^s,
\end{equation*}
\begin{equation*}
\left\|k_{\alpha}(x,\cdot)\right\|_{C_b^{d+1}(X)}, \left\|m_{\alpha}\right\|_{C_b^{d+1}(X)},
\left\|m_{\alpha,n}\right\|_{C_b^{d+1}(X)}, \left\|\frac{1}{m_{\alpha}}\right\|_{C_b^{d+1}(X)},
\left\|\frac{1}{m_{\alpha,n}}\right\|_{C_b^{d+1}(X)} \leq C(\alpha,d),
\end{equation*}
\begin{equation*}
a_{\alpha}(x,\cdot), a_{\alpha,n}(x,\cdot) \in C_b^{d+1}(X) \subset
\HH^{d+1} \subset \HH^s, 
\end{equation*}
\begin{equation*}
\left\|a_{\alpha}(x,\cdot)\right\|_{\HH^s},
\left\|a_{\alpha,n}(x,\cdot)\right\|_{\HH^s} \leq C(\alpha,d).
\end{equation*}
\end{lemma}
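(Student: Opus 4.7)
The plan is to verify each of the four clusters of assertions by direct analytic manipulation, leaning on Assumption~\ref{assumption 1.5} (which gives $k_\alpha \in C_b^{d+1}(X\times X)$, a positive lower bound $k_\alpha \geq C_1(\alpha) > 0$, and $\mu(X) = 1$), together with the Sobolev embedding $C_b^{d+1}(X) \subset \HH^{d+1} \subset \HH^s$ with norm bound $\|f\|_{\HH^s} \leq C(d)\|f\|_{C_b^{d+1}(X)}$ that was recorded in Section~\ref{sec: RKHS}. Once the $C_b^{d+1}(X)$ bounds are established for each function, the $\HH^s$ bounds come for free.

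First I would handle $k_\alpha(x,\cdot)$, $m_\alpha$, and $m_{\alpha,n}$. The claim for $k_\alpha(x,\cdot)$ is immediate from item~4 of Assumption~\ref{assumption 1.5} by freezing the first variable. For $m_\alpha(x) = \int_X k_\alpha(x,y)\,d\mu(y)$, since derivatives of $k_\alpha$ in the first variable up to order $d+1$ exist, are continuous, and are uniformly bounded, and since $\mu$ is a probability measure, repeated differentiation under the integral sign is justified, giving $m_\alpha \in C_b^{d+1}(X)$ with $\|m_\alpha\|_{C_b^{d+1}(X)} \leq C(d)\|k_\alpha\|_{C_b^{d+1}(X\times X)} \leq C(\alpha,d)$. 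For $m_{\alpha,n}$, which is a convex combination of the shifts $k_\alpha(\cdot, x^{(i)})$, the bound follows from the triangle inequality in $C_b^{d+1}(X)$; crucially the bound is independent of the sample because each shift is controlled by the same constant $\|k_\alpha\|_{C_b^{d+1}(X\times X)}$.

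Second, for the reciprocals $1/m_\alpha$ and $1/m_{\alpha,n}$, the key point is the uniform positive lower bound: since $k_\alpha \geq C_1(\alpha) > 0$ and $\mu(X) = 1$, we have $m_\alpha(x),\, m_{\alpha,n}(x) \geq C_1(\alpha) > 0$ for every $x \in X$, uniformly in $n$ and the sample. Writing $1/m$ as the composition $\phi\circ m$ with $\phi(t) = 1/t$ smooth on $[C_1(\alpha),\infty)$, the Fa\`a di Bruno formula expresses any multi-index derivative of $1/m$ as a polynomial in $1/m$ and the derivatives of $m$ of order $\leq d+1$. All of these are uniformly bounded by the previous paragraph, yielding $\|1/m_\alpha\|_{C_b^{d+1}(X)},\, \|1/m_{\alpha,n}\|_{C_b^{d+1}(X)} \leq C(\alpha,d)$.

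Third, for $a_\alpha(x,\cdot)$ I would factor
\begin{equation*}
a_\alpha(x,\cdot) = \frac{1}{\sqrt{m_\alpha(x)}}\cdot k_\alpha(x,\cdot) \cdot \frac{1}{\sqrt{m_\alpha(\cdot)}},
\end{equation*}
where the first factor is a scalar (the variable $x$ is fixed) bounded by $C_1(\alpha)^{-1/2}$. The function $1/\sqrt{m_\alpha}$ is handled exactly as $1/m_\alpha$, composing with the smooth $\psi(t) = t^{-1/2}$ on $[C_1(\alpha),\infty)$. Then the Leibniz rule gives $\|a_\alpha(x,\cdot)\|_{C_b^{d+1}(X)} \leq C(\alpha,d)$, and the same argument applies to $a_{\alpha,n}(x,\cdot)$ using $m_{\alpha,n}$. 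Finally, applying the Sobolev embedding $\|f\|_{\HH^s} \leq C(d)\|f\|_{C_b^{d+1}(X)}$ converts the $C_b^{d+1}(X)$ bounds into the claimed $\HH^s$ bounds, noting that $s = \lfloor d/2\rfloor + 1 \leq d+1$ so the embedding $\HH^{d+1}\subset \HH^s$ is available. The only real obstacle is the bookkeeping in the Fa\`a di Bruno/Leibniz steps for the reciprocal and square-root compositions; conceptually this is routine once the uniform positive lower bound $m_\alpha,\,m_{\alpha,n}\geq C_1(\alpha) > 0$ is in hand, and all constants collapse into a single $C(\alpha,d)$.
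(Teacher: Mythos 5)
The paper does not prove this lemma at all: it is imported verbatim (``adapted for symmetric diffusion operators'') from Lemma~16 of the cited Rosasco--Belkin--De Vito paper, so there is no internal proof to compare against. Your reconstruction is correct and is essentially the standard argument used in that source: slice the kernel, differentiate under the integral (resp.\ average over the sample) for $m_{\alpha}$ and $m_{\alpha,n}$, exploit the uniform lower bound $m_{\alpha}, m_{\alpha,n} \geq C_1(\alpha)$ coming from $k_{\alpha}\geq C_1(\alpha)$ and $\mu(X)=1$ (note this bound is independent of $n$ and of the sample, which is exactly what makes the constants sample-free), handle $1/m$ and $1/\sqrt{m}$ by composition with a smooth function on $[C_1(\alpha),\infty)$, and finish with the Leibniz rule and the Sobolev embedding $C_b^{d+1}(X)\subset\HH^{d+1}\subset\HH^s$ recorded in Section~\ref{sec: RKHS}. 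The only bookkeeping point worth being aware of is that the Fa\`a di Bruno/Leibniz steps use bounds on \emph{intermediate} derivatives of $k_{\alpha}$ and $m_{\alpha}$, which the $C_b^{d+1}$ norm as written controls only via the boundedness of the domain and the cone condition (interpolation of derivatives); this is a convention inherited from the cited reference rather than a gap in your argument.
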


Lemma \ref{lem: fcn bounds} allows one to define the
operators $A_{\alpha,\HH^s}: \HH^s \rightarrow \HH^s$ and $A_{\alpha,n}: \HH^s
\rightarrow \HH^s$,
\begin{align*}
(A_{\alpha, \HH^s}f)(x) &\triangleq \int\limits_X a_\alpha(x,y) \, \langle f, h_s(y,\cdot)
\rangle_{\HH^s} \, d\mu(y),  &\text{for all } \alpha \in \I,
\enspace f \in \HH^s, \\
(A_{\alpha,n}f)(x) &\triangleq \frac{1}{n} \sum_{i=1}^n
a_{\alpha,n}(x, x^{(i)}) \, \langle f,
h_s(x^{(i)},\cdot) \rangle_{\HH^s}, &\text{for all } \alpha \in \I,
\enspace f \in \HH^s.
\end{align*}
We also define similar operators $T_{\HH^s}:\HH^s \rightarrow \HH^s$
and $T_n:\HH^s \rightarrow \HH^s$, but in terms of the reproducing
kernel $h_s$.
\begin{align*}
(T_{\HH^s}f)(x) &\triangleq \int\limits_X h_s(x, y) \, \langle f, h_s(y,\cdot)
\rangle_{\HH^s} \, d\mu (y), &\text{for all } f \in \HH^s, \\
(T_n f)(x) &\triangleq \frac{1}{n} \sum_{i=1}^n h_s(x, x^{(i)}) \, \langle f, h_s(x^{(i)},\cdot)
\rangle_{\HH^s}, &\text{for all } f \in \HH^s.
\end{align*}
The above operators, as well as $A_{\alpha}$ and $\A_{\alpha}$, can be
decomposed in terms of the appropriate restriction and extension
operators. We begin with the two restriction operators,
$R_{\HH^s}:\HH^s \rightarrow L^2(X,\mu)$ and $R_n:\HH^s \rightarrow \R^n$.
\begin{align*}
(R_{\HH^s} f)(x) &\triangleq \langle f,h_s(x,\cdot) \rangle_{\HH^s},
\qquad \quad \enspace \text{for } \mu \text{ a.e. } x \in X, \text{
  for all } f \in \HH^s,\\
R_n f &\triangleq (f(x^{(1)}), \ldots, f(x^{(n)})), \quad \text{for all } f \in \HH^s.
\end{align*}
For each $\alpha \in \I$ we also have two extension
operators, $E_{\alpha, \HH^s}: L^2(X, \mu) \rightarrow \HH^s$ and
$E_{\alpha,n}: \R^n \rightarrow \HH^s$, where
\begin{align*}
(E_{\alpha, \HH^s}f)(x) &\triangleq \int\limits_X a_\alpha (x, y) f(y) \, d\mu(y), \quad
\text{for all }  x \in X, \enspace f \in L^2(X, \mu), \\
(E_{\alpha,n}v)(x) &\triangleq \frac{1}{n} \sum_{i=1}^n v[i] \,
a_{\alpha,n}(x, x^{(i)}), \quad \text{for all } x \in X, \enspace  v \in \R^n.
\end{align*}
Using these operators, one can easily show the following identities:
\begin{align}
A_\alpha = R_{\HH^s} E_{\alpha, \HH^s} \quad &\text{and} \quad A_{\alpha, \HH^s} =
E_{\alpha, \HH^s} R_{\HH^s}, \nonumber \\
\A_\alpha = R_n E_{\alpha,n} \quad &\text{and} \quad A_{\alpha,n} =
E_{\alpha,n} R_n, \label{eqn: operator identities} \\
T_{\HH^s} = R_{\HH^s}^*R_{\HH^s} \quad &\text{and} \quad T_n =
R_n^*R_n. \nonumber
\end{align}

\subsection{Similarity between empirical and continuous operators}

Here we collect remaining results that we shall need that involve the
similarity between the empirical and continuous versions of the
previously defined operators and functions. All of these results can
be found in \cite{devito:learningExamples2005,
  rosasco:learningIntegralOps2010}. 
\begin{theorem}[\cite{devito:learningExamples2005}, also Theorem $7$
  from \cite{rosasco:learningIntegralOps2010}]\label{thm: rbd7}
Suppose that $(X,\mu)$ and $\{k_{\alpha}\}_{\alpha \in \I}$ satisfy
the conditions of Assumption \ref{assumption 1.5}. Let $n \in \N$ and
sample $X_n = \{x^{(1)}, \ldots, x^{(n)}\} \subset X$ i.i.d. according to
$\mu$; also let $\tau > 0$. Then the operators $T_{\HH^s}$ and
$T_n$ are Hilbert-Schmidt, and with probability $1-2e^{-\tau}$,
\begin{equation*}
\left\| T_{\HH^s} - T_n \right\|_{HS} \leq C(d) \frac{\sqrt{\tau}}{\sqrt{n}}.
\end{equation*}
\end{theorem}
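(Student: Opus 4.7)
The plan is to recognize $T_{\HH^s}$ and $T_n$ as the population and empirical averages of a single i.i.d.\ sequence of rank-one operators valued in the Hilbert space $HS(\HH^s)$ of Hilbert-Schmidt operators on $\HH^s$, and then to invoke a standard concentration inequality for bounded i.i.d.\ averages in a separable Hilbert space (of the Pinelis / Yurinskii / vector-valued Hoeffding type).

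Concretely, for each $x \in X$ I would define the rank-one operator $\xi_x \in HS(\HH^s)$ by $\xi_x(f) \triangleq \langle f, h_s(x,\cdot)\rangle_{\HH^s}\, h_s(x,\cdot)$. Using the reproducing property together with the identities $T_{\HH^s} = R_{\HH^s}^* R_{\HH^s}$ and $T_n = R_n^* R_n$ from \eqref{eqn: operator identities}, a direct computation gives $T_{\HH^s} = \int_X \xi_x\, d\mu(x)$ (as a Bochner integral in $HS(\HH^s)$) and $T_n = \tfrac{1}{n}\sum_{i=1}^n \xi_{x^{(i)}}$, so that
\begin{equation*}
T_n - T_{\HH^s} \;=\; \frac{1}{n}\sum_{i=1}^n \bigl(\xi_{x^{(i)}} - \mathbb{E}\,\xi_{X}\bigr)
\end{equation*}
is an i.i.d.\ centered average in $HS(\HH^s)$. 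Each summand is uniformly bounded in $HS$-norm: by the reproducing property, $\|\xi_x\|_{HS}^2 = \|h_s(x,\cdot)\|_{\HH^s}^4 = h_s(x,x)^2$, and since $h_s$ is the continuous reproducing kernel of the Sobolev space $\HH^s$ with $s = \lfloor d/2\rfloor + 1$ and $X$ is bounded, $\sup_{x \in X} h_s(x,x) \leq \kappa(d) < \infty$. This same bound also yields Hilbert-Schmidtness of $T_{\HH^s}$ and $T_n$ by a standard Bochner-integral/triangle-inequality argument.

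With the uniform bound in place, I would apply a vector-valued Hoeffding inequality for i.i.d.\ averages of bounded random elements in the separable Hilbert space $HS(\HH^s)$; this delivers
\begin{equation*}
\mathbb{P}\!\left( \|T_{\HH^s} - T_n\|_{HS} \leq \frac{C(d)\sqrt{\tau}}{\sqrt{n}} \right) \;\geq\; 1 - 2e^{-\tau},
\end{equation*}
which is the claim. The main obstacle is not any conceptual difficulty but rather having the correct Hilbert-space concentration inequality at hand with the stated $\sqrt{\tau/n}$ rate; this is exactly what the De Vito et al.\ result supplies. Beyond that, everything reduces to the identification above and to controlling $\sup_{x\in X} h_s(x,x)$, which is immediate from continuity of the Sobolev kernel $h_s$ and boundedness of $X$.
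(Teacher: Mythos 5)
Your proposal is correct, but note that the paper does not prove this statement at all: it is quoted verbatim from \cite{devito:learningExamples2005} and Theorem $7$ of \cite{rosasco:learningIntegralOps2010}. The argument you give --- writing $T_{\HH^s}$ and $T_n$ as the mean and empirical mean of the bounded rank-one random elements $\xi_x f = \langle f, h_s(x,\cdot)\rangle_{\HH^s}\, h_s(x,\cdot)$ in the separable Hilbert space of Hilbert--Schmidt operators on $\HH^s$, bounding $\|\xi_x\|_{HS} = h_s(x,x) \leq \kappa(d)$ via boundedness of the Sobolev reproducing kernel, and applying a Pinelis-type vector-valued concentration inequality to get the $\sqrt{\tau/n}$ rate with probability $1-2e^{-\tau}$ --- is essentially the same proof given in those cited sources, so there is nothing to correct.
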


\begin{theorem}[Theorem $15$ from
  \cite{rosasco:learningIntegralOps2010}]\label{thm: rbd15}
Suppose that $(X,\mu)$ and $\{k_{\alpha}\}_{\alpha \in \I}$ satisfy
the conditions of Assumption \ref{assumption 1.5}. Let $n \in \N$ and
sample $X_n = \{x^{(1)}, \ldots, x^{(n)}\} \subset X$ i.i.d. according to
$\mu$; also let $\tau > 0$ and $\alpha \in \I$. Then the operators
$A_{\alpha,\HH^s}$ and $A_{\alpha,n}$ are Hilbert-Schmidt, and with
probability $1-2e^{-\tau}$,
\begin{equation*}
\left\| A_{\alpha,\HH^s} - A_{\alpha,n} \right\|_{HS} \leq C(\alpha,d)
\frac{\sqrt{\tau}}{\sqrt{n}}. 
\end{equation*}
\end{theorem}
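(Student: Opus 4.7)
The plan is to mimic the proof of Theorem \ref{thm: rbd7}: write $A_{\alpha,\HH^s}$ as an integral of a Hilbert-Schmidt operator-valued function against $\mu$, introduce the corresponding empirical mean, and then apply a vector-valued Hoeffding inequality of Pinelis type. The complication relative to Theorem \ref{thm: rbd7} is that $A_{\alpha,n}$ uses the sample-dependent kernel $a_{\alpha,n}$, so $A_{\alpha,n}$ is \emph{not} an iid empirical mean in the variables $x^{(i)}$; I would therefore bridge the two operators via an intermediate operator.

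Concretely, for each $y \in X$ let $S_y:\HH^s \to \HH^s$ be the rank-one operator $S_y f \triangleq a_{\alpha}(\cdot, y) \langle f, h_s(y,\cdot)\rangle_{\HH^s}$, so that $A_{\alpha,\HH^s} = \int_X S_y \, d\mu(y)$, and define the intermediate operator
\[
\widetilde A_{\alpha,n} \triangleq \frac{1}{n}\sum_{i=1}^n S_{x^{(i)}}.
\]
Combining Lemma \ref{lem: fcn bounds} with the reproducing identity $\|h_s(y,\cdot)\|_{\HH^s}^2 = h_s(y,y)$ (and $h_s$ bounded via the Sobolev embedding \eqref{eqn: sobolev embedding}) yields a deterministic bound $\|S_y\|_{HS} \leq C(\alpha,d)$ uniformly in $y$. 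Pinelis's Hilbert-space Hoeffding inequality applied to the iid sequence $\{S_{x^{(i)}}\}$ then gives, with probability at least $1-2e^{-\tau}$,
\[
\|A_{\alpha,\HH^s} - \widetilde A_{\alpha,n}\|_{HS} \leq C(\alpha,d)\frac{\sqrt{\tau}}{\sqrt{n}}.
\]

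Next I would estimate $\|\widetilde A_{\alpha,n} - A_{\alpha,n}\|_{HS}$, the perturbation coming from replacing $a_{\alpha}$ by $a_{\alpha,n}$. Writing
\[
a_{\alpha}(x,y) - a_{\alpha,n}(x,y) = k_{\alpha}(x,y)\left(\frac{1}{\sqrt{m_{\alpha}(x) m_{\alpha}(y)}} - \frac{1}{\sqrt{m_{\alpha,n}(x) m_{\alpha,n}(y)}}\right)
\]
and using the uniform two-sided bounds on $m_{\alpha}$ and $m_{\alpha,n}$ from Lemma \ref{lem: fcn bounds}, the task reduces to controlling $\|m_{\alpha} - m_{\alpha,n}\|_{\HH^s}$. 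Since $m_{\alpha,n} - m_{\alpha} = \frac{1}{n}\sum_i (k_{\alpha}(\cdot, x^{(i)}) - m_{\alpha})$ is an iid empirical mean of $\HH^s$-valued random variables bounded in $\HH^s$ by Lemma \ref{lem: fcn bounds}, a second application of Pinelis in $\HH^s$ gives $\|m_{\alpha} - m_{\alpha,n}\|_{\HH^s} \leq C(\alpha,d)\sqrt{\tau/n}$ with probability at least $1-2e^{-\tau}$, which the embedding $\HH^s \hookrightarrow C_b(X)$ then upgrades to a uniform pointwise bound. Propagating this kernel perturbation through the rank-one structure of $S_y$ and the empirical analogue yields $\|\widetilde A_{\alpha,n} - A_{\alpha,n}\|_{HS} \leq C(\alpha,d)\sqrt{\tau/n}$, and the triangle inequality combined with a union bound (absorbing constants into $\tau$) produces the claim; the Hilbert-Schmidt property of the two operators is a by-product of the same smoothness estimates.

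The main technical obstacle is the bookkeeping in the second step: controlling the Sobolev regularity of $1/\sqrt{m_{\alpha,n}}$ uniformly in $n$ with the correct high-probability scaling. To make $1/m_{\alpha,n}$ well-defined in $\HH^s$ with $\HH^s$-norm bounded independently of $n$, one must work on an auxiliary event on which $m_{\alpha,n}$ enjoys a pointwise lower bound uniform in $n$ (a consequence of the concentration of $m_{\alpha,n}$ already derived), and then chase the resulting estimates through the pointwise products, square roots, and reciprocals without losing the polynomial dependence on $\alpha$ and $d$.
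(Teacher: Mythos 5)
The paper itself contains no proof of this statement: it is imported (with the symmetric kernel $a_{\alpha}$ in place of the asymmetric one) from Theorem 15 of \cite{rosasco:learningIntegralOps2010}, so there is no internal argument to compare against. Your outline is essentially the proof given in that reference --- introduce the intermediate operator $\frac{1}{n}\sum_{i} S_{x^{(i)}}$ built from the true kernel $a_{\alpha}$ at the sampled points, apply a Pinelis-type Hoeffding inequality in the Hilbert space of Hilbert--Schmidt operators, and absorb the remaining perturbation by controlling $\|m_{\alpha}-m_{\alpha,n}\|$ (Lemma \ref{lem: rbd18}) together with the RKHS bounds of Lemma \ref{lem: fcn bounds} --- and the plan is sound. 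One simplification: since Assumption \ref{assumption 1.5} gives $k_{\alpha}\geq C_1(\alpha)>0$, the empirical density satisfies $m_{\alpha,n}\geq C_1(\alpha)$ deterministically, so the auxiliary high-probability event you invoke to lower-bound $m_{\alpha,n}$ is unnecessary; Lemma \ref{lem: fcn bounds} already bounds $\left\|1/m_{\alpha,n}\right\|_{C_b^{d+1}(X)}$ uniformly in $n$.
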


\begin{lemma}[Lemma $18$ from
  \cite{rosasco:learningIntegralOps2010}]\label{lem: rbd18}
Suppose that $(X,\mu)$ and $\{k_{\alpha}\}_{\alpha \in \I}$ satisfy
the conditions of Assumption \ref{assumption 1.5}. Let $n \in \N$ and
sample $X_n = \{x^{(1)}, \ldots, x^{(n)}\} \subset X$ i.i.d. according to
$\mu$; also let $\tau > 0$ and $\alpha \in \I$. Then, with
probability $1-2e^{-\tau}$,
\begin{equation*}
\left\| m_{\alpha} - m_{\alpha,n} \right\|_{\HH^{d+1}} \leq
C(\alpha,d) \frac{\sqrt{\tau}}{\sqrt{n}}.
\end{equation*}
\end{lemma}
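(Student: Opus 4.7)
The plan is to recognize $m_{\alpha,n}$ as the empirical average of $n$ i.i.d.\ bounded $\HH^{d+1}$-valued random variables whose common expectation (in the Bochner sense) is $m_\alpha$, and then to invoke a Bernstein-type concentration inequality for Hilbert-space-valued sums. This is essentially the strategy used for the analogous asymmetric-kernel statement in \cite{rosasco:learningIntegralOps2010}, and only needs to be translated to our symmetric setting.

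First I would set $\xi_i \triangleq k_{\alpha}(\cdot, x^{(i)})$ and regard each $\xi_i$ as an $\HH^{d+1}$-valued random variable via the Sobolev embedding $C_b^{d+1}(X) \hookrightarrow \HH^{d+1}$ of \eqref{eqn: sobolev embedding}. Lemma \ref{lem: fcn bounds} furnishes the uniform almost sure bound $\|\xi_i\|_{\HH^{d+1}} \le C(\alpha,d)$. Since $x^{(1)}, \ldots, x^{(n)}$ are i.i.d.\ with common distribution $\mu$, the $\xi_i$ are i.i.d.\ in $\HH^{d+1}$; the joint continuity of $k_{\alpha}$ (Assumption \ref{assumption 1.5}) combined with the uniform norm bound makes each $\xi_i$ Bochner integrable, and the Bochner integral evaluates pointwise to $m_{\alpha}(x) = \int_X k_{\alpha}(x,y)\, d\mu(y)$. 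Hence $\mathbb{E}[\xi_i] = m_{\alpha}$ in $\HH^{d+1}$, and $m_{\alpha} - m_{\alpha,n} = \mathbb{E}[\xi_1] - (1/n)\sum_{i=1}^n \xi_i$ is a centered empirical mean in a separable Hilbert space.

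I would then apply the Pinelis–Sakhanenko-type inequality used to prove Theorems \ref{thm: rbd7} and \ref{thm: rbd15}: for i.i.d.\ $\HH$-valued random variables bounded in norm by $M$, the deviation of the empirical mean from the true mean is at most a universal constant times $M\sqrt{\tau/n}$ with probability at least $1 - 2e^{-\tau}$. Taking $M = C(\alpha, d)$ from the previous step immediately produces the stated bound $\|m_{\alpha} - m_{\alpha,n}\|_{\HH^{d+1}} \le C(\alpha,d)\sqrt{\tau}/\sqrt{n}$.

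The main obstacle is really only bookkeeping: one must verify Bochner measurability and integrability of $\xi_i$ in $\HH^{d+1}$ (as opposed to producing a merely pointwise or $C_b^{d+1}$-norm statement) and one must track that the resulting constant depends only on $\alpha$ and $d$. Both issues are handled by Lemma \ref{lem: fcn bounds}, which is precisely why the authors isolated that lemma. Once it is in hand, the argument is identical to \cite[Lemma 18]{rosasco:learningIntegralOps2010}, and the passage from their asymmetric normalization to our symmetric one is purely notational for this particular bound, since the density $m_{\alpha}$ depends only on $k_{\alpha}$ and not on how it is subsequently symmetrized.
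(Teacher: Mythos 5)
Your proposal is correct, and it reconstructs exactly the argument behind the cited result: the paper itself gives no proof of this lemma, importing it verbatim as Lemma 18 of \cite{rosasco:learningIntegralOps2010}, whose proof is precisely the one you describe --- view $k_{\alpha}(\cdot,x^{(i)})$ as i.i.d.\ $\HH^{d+1}$-valued random variables, bounded in norm via the Sobolev embedding and the kernel bounds (here packaged in Lemma \ref{lem: fcn bounds}), with Bochner mean $m_{\alpha}$, and apply the standard Hilbert-space concentration inequality already invoked for Theorems \ref{thm: rbd7} and \ref{thm: rbd15}. Your observation that the symmetric normalization is irrelevant here, since $m_{\alpha}$ depends only on $k_{\alpha}$, is also the reason the paper can restate the lemma without modification.
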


\subsection{Proof of Theorem \ref{thm: ptwise diff dist sample}}

In this section we prove Theorem \ref{thm: ptwise diff dist sample},
which we restate here.

\begin{theorem}[Theorem \ref{thm: ptwise diff dist sample}]
Suppose that $(X,\mu)$ and $\{k_{\alpha}\}_{\alpha \in \I}$ satisfy the
conditions of Assumption \ref{assumption 1.5}. Let $n \in \N$ and
sample $X_n = \{x^{(1)}, \ldots, x^{(n)}\} \subset X$ i.i.d. according to
$\mu$; also let $t \in \N$, $\tau > 0$, and $\alpha,\beta \in \I$. Then, with
probability $1-2e^{-\tau}$, 
\begin{equation*}
\left| D^{(t)}(x_{\alpha}^{(i)},x_{\beta}^{(j)}) -
  D_n^{(t)}(x_{\alpha}^{(i)},x_{\beta}^{(j)}) \right| \leq C(\alpha,\beta,d,t)
\frac{\sqrt{\tau}}{\sqrt{n}}, \quad \text{for all } i,j = 1, \ldots, n.
\end{equation*}
\end{theorem}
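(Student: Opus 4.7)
The plan is to lift the computation into the reproducing kernel Hilbert space $\HH^s$ from \ref{sec: RKHS} and to reduce the bound to the perturbation estimates of Theorems \ref{thm: rbd7}, \ref{thm: rbd15} and Lemma \ref{lem: rbd18}. First I would identify both diffusion distances as RKHS quadratic forms on closely related elements of $\HH^s$. By Lemma \ref{lem: fcn bounds}, $a_\alpha(x,\cdot)$ and $a_{\alpha,n}(x,\cdot)$ lie in $\HH^s$ with norms bounded uniformly in $x$ and $n$; since the reproducing identity makes $A_{\alpha,\HH^s}$ act on an element of $\HH^s$ in the same way the integral operator $A_\alpha$ acts on its $L^2$ representative, the function $\psi_{\alpha,x}^{(t)} \triangleq A_{\alpha,\HH^s}^{t-1} a_\alpha(x,\cdot) \in \HH^s$ coincides pointwise with $a_\alpha^{(t)}(x,\cdot)$, and the empirical analog $\psi_{\alpha,n,x^{(i)}}^{(t)} \triangleq A_{\alpha,n}^{t-1} a_{\alpha,n}(x^{(i)},\cdot)$ coincides pointwise with the iterated discrete kernel of $\A_\alpha^t$. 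Setting $\Delta = \psi_{\alpha,x^{(i)}}^{(t)} - \psi_{\beta,x^{(j)}}^{(t)}$ and $\Delta_n$ analogously, the factorizations \eqref{eqn: operator identities} together with $T_{\HH^s} = R_{\HH^s}^* R_{\HH^s}$ and the analogous structure of $T_n$ yield (after absorbing the sampling normalization into the operators)
\begin{equation*}
D^{(t)}(x_\alpha^{(i)},x_\beta^{(j)})^2 = \langle T_{\HH^s} \Delta, \Delta \rangle_{\HH^s}, \qquad D_n^{(t)}(x_\alpha^{(i)},x_\beta^{(j)})^2 = \langle T_n \Delta_n, \Delta_n \rangle_{\HH^s}.
\end{equation*}

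Next I would gather three perturbation estimates, each at rate $\sqrt{\tau}/\sqrt{n}$ with probability at least $1 - 2e^{-\tau}$. Lemma \ref{lem: rbd18}, combined with the Banach algebra structure of $C_b^{d+1}(X) \subset \HH^s$ and the uniform lower bound on $m_\alpha$, yields $\|a_\alpha(x,\cdot) - a_{\alpha,n}(x,\cdot)\|_{\HH^s} \leq C(\alpha,d)\sqrt{\tau}/\sqrt{n}$ uniformly in $x$. Theorem \ref{thm: rbd15}, together with the telescoping identity
\begin{equation*}
A_{\alpha,\HH^s}^{t-1} - A_{\alpha,n}^{t-1} = \sum_{k=0}^{t-2} A_{\alpha,\HH^s}^{k} \, (A_{\alpha,\HH^s} - A_{\alpha,n}) \, A_{\alpha,n}^{t-2-k}
\end{equation*}
and uniform operator-norm bounds on the factors, propagates the one-step estimate to $\|A_{\alpha,\HH^s}^{t-1} - A_{\alpha,n}^{t-1}\|_{HS} \leq C(\alpha,d,t)\sqrt{\tau}/\sqrt{n}$; an add-and-subtract of $A_{\alpha,n}^{t-1} a_\alpha(x,\cdot)$ then gives $\|\Delta - \Delta_n\|_{\HH^s} \leq C(\alpha,\beta,d,t)\sqrt{\tau}/\sqrt{n}$. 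Theorem \ref{thm: rbd7} supplies $\|T_{\HH^s} - T_n\|_{HS} \leq C(d)\sqrt{\tau}/\sqrt{n}$. A union bound over these finitely many events preserves the $1 - 2e^{-\tau}$ probability after constant adjustments.

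To assemble the bound, use the representations $D^{(t)} = \|T_{\HH^s}^{1/2} \Delta\|_{\HH^s}$ and $D_n^{(t)} = \|T_n^{1/2} \Delta_n\|_{\HH^s}$ and apply the reverse triangle inequality:
\begin{equation*}
|D^{(t)} - D_n^{(t)}| \leq \|T_{\HH^s}^{1/2} - T_n^{1/2}\| \cdot \|\Delta\|_{\HH^s} + \|T_n^{1/2}\| \cdot \|\Delta - \Delta_n\|_{\HH^s}.
\end{equation*}
The second summand is directly $O(\sqrt{\tau/n})$ by the previous paragraph. The main technical obstacle lies in the first summand: transferring the Hilbert--Schmidt rate $\sqrt{\tau/n}$ on $T_{\HH^s} - T_n$ to an operator-norm bound on the positive square-root difference at the same rate. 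A naive H\"older-$1/2$ functional calculus only delivers $(\tau/n)^{1/4}$; to recover $\sqrt{\tau/n}$ one must exploit the positive self-adjoint structure of $T_{\HH^s}$ and $T_n$, for instance via the integral representation $T^{1/2} = \frac{1}{\pi}\int_0^{\infty} s^{-1/2} (I - s(T+s)^{-1}) \, ds$ combined with resolvent perturbation estimates on $(T_{\HH^s}+s)^{-1} - (T_n+s)^{-1}$.
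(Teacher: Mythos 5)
Most of your outline tracks the paper's actual argument: the same RKHS $\HH^s$, the same three inputs (Lemma \ref{lem: rbd18} for the kernel rows, Theorem \ref{thm: rbd15} propagated to powers, Theorem \ref{thm: rbd7} for $T_{\HH^s}-T_n$), and the same factorizations \eqref{eqn: operator identities}. Your direct identification of $A_{\alpha,\HH^s}^{t-1}a_{\alpha}(x,\cdot)$ with $a_{\alpha}^{(t)}(x,\cdot)$ is in fact a cleaner substitute for the paper's detour through the Mercer truncations $\varphi_{\alpha}^{(N(\varepsilon,\alpha),i)}$, and your telescoping bound for $A_{\alpha,\HH^s}^{t-1}-A_{\alpha,n}^{t-1}$ plays the role of the paper's one-factor-at-a-time Cauchy--Schwarz swaps. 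Up to that point the proposal is sound.

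The genuine gap is the assembly step, which you yourself flag: you need $\| T_{\HH^s}^{1/2}-T_n^{1/2}\|$ at rate $\sqrt{\tau}/\sqrt{n}$, and this cannot be obtained in the present setting. $T_{\HH^s}$ and $T_n$ are compact positive operators, so their spectra accumulate at zero and there is no spectral gap to exploit; the square-root map is only H\"{o}lder-$1/2$ in operator norm on positive operators, and the integral representation you propose does not help, because the resolvent perturbation bound $\|(T_{\HH^s}+s)^{-1}-(T_n+s)^{-1}\| \leq s^{-2}\|T_{\HH^s}-T_n\|$ degenerates as $s \rightarrow 0$ and integrating it against $s^{-1/2}$ returns exactly the $(\tau/n)^{1/4}$ rate you were trying to beat. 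The paper never takes operator square roots: it expands both squared distances into the three inner products \eqref{eqn: discrete inner products} and \eqref{eqn: cont inner products} and compares them pairwise, replacing $T_n$ by $T_{\HH^s}$, $A_{\alpha,n}^{t-1}$ by $A_{\alpha,\HH^s}^{t-1}$, and $a_{\alpha,n}(x^{(i)},\cdot)$ by the element of $\HH^s$ representing $a_{\alpha}^{(t)}(x^{(i)},\cdot)$, one factor at a time via Cauchy--Schwarz and the uniform $\HH^s$-norm bounds of Lemma \ref{lem: fcn bounds}; this controls the difference of the quadratic forms at rate $\sqrt{\tau}/\sqrt{n}$ with no functional calculus at all. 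If you replace your reverse-triangle decomposition by this quadratic-form comparison, the three perturbation estimates you already assembled are precisely sufficient; as written, however, the final step of your proof does not go through.
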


\begin{proof}[Proof of Theorem \ref{thm: ptwise diff dist sample}]
First an additional piece of notation. Recall
the $d$-dimensional index $\gamma = (\gamma_1, \ldots,
\gamma_d)$. Let $\partial_x^{\gamma}a_{\alpha}$ denote the
$\gamma^{\text{th}}$ partial derivative of $a_{\alpha}$ with respect
to the variable $x$.

We begin with the empirical diffusion distance. Recall that
$D_n^{(t)}(x_{\alpha}^{(i)},x_{\beta}^{(j)})^2 = n^2\|\A_{\alpha}^t[i,\cdot] -
\A_{\beta}^t[j,\cdot]\|_{\R^n}^2$. For each $i = 1, \ldots, n$, define
the vector $e^{(i)} \in \R^n$ as
\begin{equation*}
e^{(i)}[j] \triangleq \left\{
\begin{array}{ll}
1, & \text{if } j = i, \\
0, & \text{if } j \neq i,
\end{array}
\right. \quad \text{for all } j = 1, \ldots, n.
\end{equation*}
We then have
\begin{align}
D_n^{(t)}(x_{\alpha}^{(i)},y_{\beta}^{(j)})^2 &=
n^2\left\|\A_{\alpha}^te^{(i)} - \A_{\beta}^te^{(j)}\right\|_{\R^n}^2
\nonumber \\
&= n^2\langle \A_{\alpha}^te^{(i)},\A_{\alpha}^te^{(i)} \rangle_{\R^n}
+ n^2\langle \A_{\beta}^te^{(j)},\A_{\beta}^te^{(j)} \rangle_{\R^n} -
2n^2\langle \A_{\alpha}^te^{(i)},\A_{\beta}^te^{(j)}
\rangle_{\R^n}. \label{eqn: discrete inner products}
\end{align}

A similar expression can be had for the continuous diffusion
distance. By Assumption \ref{assumption 1.5}, $k_{\alpha} \in
C_b^{d+1}(X \times X)$ and $k_{\alpha} \geq C_1(\alpha)$. These imply
that $a_{\alpha} \in C_b^{d+1}(X \times X)$. We can then apply
Mercer's Theorem to get that
\begin{equation} \label{eqn: another mercer application}
a_{\alpha}^{(t)}(x,y) = \sum_{\ell \geq 1}
\left(\lambda_{\alpha}^{(\ell)}\right)^t \psi_{\alpha}^{(\ell)}(x) \,
\psi_{\alpha}^{(\ell)}(y), \quad \text{for all } (x,y) \in X \times X,
\end{equation}
with absolute convergence and uniform convergence on compact subsets
of $X$. In fact, since $A_{\alpha}$ is also trace class, we can get
uniform convergence on all of $X$. Indeed,
\begin{equation*}
\mathrm{Tr}(A_{\alpha}) = \sum_{\ell \geq 1} \lambda_{\alpha}^{(\ell)}
< \infty.
\end{equation*}
Therefore, for all $\varepsilon > 0$ and for each $\alpha \in \I$,
there exists $N(\varepsilon, \alpha) \in \N$ such that
\begin{equation*}
\sum_{\ell > N(\varepsilon, \alpha)}
\left(\lambda_{\alpha}^{(\ell)}\right)^t < \varepsilon.
\end{equation*}
Furthermore, since $a_{\alpha}$ is bounded, $\psi_{\alpha}^{(\ell)}
\leq C_2(\alpha)$ for all $\ell \geq 1$. Therefore,
\begin{equation} \label{eqn: uniform conv on all of X}
\sum_{\ell > N(\varepsilon,\alpha)}
\left(\lambda_{\alpha}^{(\ell)}\right)^t \psi_{\alpha}^{(\ell)}(x) \,
\psi_{\alpha}^{(\ell)}(y) \leq C_2(\alpha) \sum_{\ell > N(\varepsilon,\alpha)}
\left(\lambda_{\alpha}^{(\ell)}\right)^t < C_2(\alpha) \, \varepsilon,
\quad \text{for all } (x,y) \in X \times X.
\end{equation}

Now define a family of functions $\varphi_{\alpha}^{(N,i)} \in
L^2(X,\mu)$ for all $N \in \N$ and $i \in \{1,\ldots,n\}$,
\begin{equation*}
\varphi_{\alpha}^{(N,i)}(x) \triangleq \sum_{\ell=1}^N
\psi_{\alpha}^{(\ell)}(x^{(i)}) \, \psi_{\alpha}^{(\ell)}(x).
\end{equation*}
We claim that
\begin{equation} \label{eqn: cont row approx}
\left| a_{\alpha}^{(t)}(x^{(i)},x) - A_{\alpha}^t
  \varphi_{\alpha}^{(N(\varepsilon,\alpha),i)}(x) \right| <
C_2(\alpha) \, \varepsilon, \quad \text{for all } x \in X.
\end{equation}
Indeed,
\begin{align}
A_{\alpha}^t \varphi_{\alpha}^{(N,i)}(x) &= \int\limits_X
a_{\alpha}^{(t)}(x,y) \, \varphi_{\alpha}^{(N,i)}(y) \, d\mu(y),
\nonumber \\
&= \int\limits_X \left(\sum_{m \geq 1}
  \left(\lambda_{\alpha}^{(m)}\right)^t \psi_{\alpha}^{(m)}(x) \,
  \psi_{\alpha}^{(m)}(y) \right) \left( \sum_{\ell=1}^N
  \psi_{\alpha}^{(\ell)}(x^{(i)}) \, \psi_{\alpha}^{(\ell)}(y) \right)
\, d\mu(y), \nonumber \\
&= \sum_{m \geq 1} \sum_{\ell=1}^N
\left(\lambda_{\alpha}^{(m)}\right)^t \psi_{\alpha}^{(m)}(x) \,
\psi_{\alpha}^{(\ell)}(x^{(i)}) \int\limits_X
\psi_{\alpha}^{(m)}(y) \, \psi_{\alpha}^{(\ell)}(y) \, d\mu(y),
\nonumber \\
&= \sum_{\ell=1}^N \left(\lambda_{\alpha}^{(\ell)}\right)^t
\psi_{\alpha}^{(\ell)}(x^{(i)}) \,
\psi_{\alpha}^{(\ell)}(x). \label{eqn: Aphi form}
\end{align}
Therefore, using \eqref{eqn: another mercer application}, \eqref{eqn:
  Aphi form}, and \eqref{eqn: uniform conv on all of X}, we obtain
\begin{equation*}
\left| a_{\alpha}^{(t)}(x^{(i)},x) - A_{\alpha}^t
  \varphi_{\alpha}^{(N(\varepsilon,\alpha),i)}(x) \right| = \left|
  \sum_{\ell > N(\varepsilon,\alpha)}
  \left(\lambda_{\alpha}^{(\ell)}\right)^t
  \psi_{\alpha}^{(\ell)}(x^{(i)}) \, \psi_{\alpha}^{(\ell)}(x) \right|
< C_2(\alpha) \, \varepsilon,
\end{equation*}
and so \eqref{eqn: cont row approx} holds.

Using \eqref{eqn: cont row approx}, it not hard to see that
\begin{equation*}
\left| D^{(t)}(x_{\alpha}^{(i)},y_{\beta}^{(j)}) -
  \left\|A_{\alpha}^t\varphi_{\alpha}^{(N(\varepsilon,\alpha),i)} -
    A_{\beta}^t\varphi_{\beta}^{(N\varepsilon,\beta),i)}
  \right\|_{L^2(X,\mu)} \right| \leq C_3(\alpha,\beta) \, \varepsilon.
\end{equation*}
Thus it is enough to consider
$\|A_{\alpha}^t\varphi_{\alpha}^{(N(\varepsilon,\alpha),i)} -
A_{\beta}^t\varphi_{\beta}^{(N(\varepsilon,\beta),j)}\|_{L^2(X,\mu)}$. Expanding
the square of this quantity one has
\begin{align}
\left\|A_{\alpha}^t\varphi_{\alpha}^{(N(\varepsilon,\alpha),i)} -
A_{\beta}^t\varphi_{\beta}^{(N(\varepsilon,\beta),j)} \right\|_{L^2(X,\mu)}^2
&= \langle A_{\alpha}^t\varphi_{\alpha}^{(N(\varepsilon,\alpha),i)},
A_{\alpha}^t\varphi_{\alpha}^{(N(\varepsilon,\alpha),i)} \rangle_{L^2(X,\mu)}
\nonumber \\
&+ \langle A_{\beta}^t\varphi_{\beta}^{(N(\varepsilon,\beta),j)},
A_{\beta}^t\varphi_{\beta}^{(N(\varepsilon,\beta),j)} \rangle_{L^2(X,\mu)}
-2\langle A_{\alpha}^t\varphi_{\alpha}^{(N(\varepsilon,\alpha),i)},
A_{\beta}^t\varphi_{\beta}^{(N(\varepsilon,\beta),j)}
\rangle_{L^2(X,\mu)}. \label{eqn: cont inner products}
\end{align}
The three inner products in \eqref{eqn: discrete inner products}
correspond to the three inner products in \eqref{eqn: cont inner
  products}. We aim to show that each pair is nearly identical. We
will do so explicitly for the pair $n^2\langle
\A_{\alpha}^te^{(i)},\A_{\beta}^te^{(j)} \rangle_{\R^n}$ and $\langle
A_{\alpha}^t\varphi_{\alpha}^{(N(\varepsilon,\alpha),i)},A_{\beta}^t\varphi_{\beta}^{(N(\varepsilon,\beta),j)}
\rangle_{L^2(X,\mu)}$; the other two pairs are simply special cases of
this one. We begin with the discrete inner product, for which we have
the following with probability $1-2e^{-\tau}$:
\begin{align}
n^2\langle \A_{\alpha}^t e^{(i)}, \A_{\beta}^t e^{(j)} \rangle_{\R^n}
&= n^2 \langle (R_nE_{\alpha,n})^t e^{(i)}, (R_nE_{\beta,n})^t e^{(j)}
\rangle_{\R^n} \label{eqn: discrete 1} \\
&= n^2 \langle (E_{\alpha,n}R_n)^{t-1}E_{\alpha,n} e^{(i)}, R_n^*R_n
(E_{\beta,n}R_n)^{t-1} E_{\beta,n} e^{(j)} \rangle_{\HH^s} \nonumber \\
&= \langle A_{\alpha,n}^{t-1} a_{\alpha,n}(x^{(i)},\cdot), T_n
A_{\beta,n}^{t-1} a_{\beta,n}(x^{(j)},\cdot)
\rangle_{\HH^s} \label{eqn: discrete 2} \\
&\leq \langle A_{\alpha, \HH^s}^{t-1}a_{\alpha,n}(x^{(i)},\cdot),
T_{\HH^s} A_{\beta, \HH^s}^{t-1} a_{\beta,n}(x^{(j)},\cdot)
\rangle_{\HH^s} +
C(\alpha,\beta,d,t)\frac{\sqrt{\tau}}{\sqrt{n}}, \label{eqn: discrete 3}
\end{align}
where \eqref{eqn: discrete 1} follows from \eqref{eqn: operator
  identities}, \eqref{eqn: discrete 2} follows from \eqref{eqn:
  operator identities} and the definitions of $E_{\alpha,n}$ and
$e^{(i)}$, and \eqref{eqn: discrete 3} follows from Lemma \ref{lem:
  fcn bounds}, Theorem \ref{thm: rbd7}, Theorem \ref{thm: rbd15}, and the
Cauchy-Schwarz inequality. Since the argument is symmetric, we have,
with probability $1-2e^{-\tau}$,
\begin{equation} \label{eqn: discrete 4}
\left| n^2\langle \A_{\alpha}^te^{(i)},\A_{\beta}^te^{(j)}
  \rangle_{\R^n} - \langle
  A_{\alpha,\HH^s}^{t-1}a_{\alpha,n}(x^{(i)},\cdot),
  T_{\HH^s}A_{\beta,\HH^s}^{t-1}a_{\beta,n}(x^{(j)},\cdot)
  \rangle_{\HH^s} \right| \leq C(\alpha,\beta,d,t)\frac{\sqrt{\tau}}{\sqrt{n}}.
\end{equation}
Now return to the continuous inner product. With probability
$1-2e^{-\tau}$, we have:
\begin{align}
\langle A_{\alpha}^t\varphi_{\alpha}^{(N(\varepsilon,\alpha),i)},
A_{\beta}^t\varphi_{\beta}^{(N(\varepsilon,\beta),j)} \rangle_{L^2(X,\mu)} &=
\langle (R_{\HH^s}E_{\alpha,\HH^s})^t\varphi_{\alpha}^{(N(\varepsilon,\alpha),i)},
(R_{\HH^s}E_{\beta,\HH^s})^t\varphi_{\beta}^{(N(\varepsilon,\beta),j)}
\rangle_{L^2(X,\mu)} \label{eqn: cont 1} \\
&= \langle (E_{\alpha,\HH^s}R_{\HH^s})^{t-1}
E_{\alpha,\HH^s}\varphi_{\alpha}^{(N(\varepsilon,\alpha),i)},
R_{\HH^s}^*R_{\HH^s} (E_{\beta,\HH^s}R_{\HH^s})^{t-1}
E_{\beta,\HH^s}\varphi_{\beta}^{(N(\varepsilon,\beta),j)} \rangle_{\HH^s}
\nonumber \\
&= \langle A_{\alpha,\HH^s}^{t-1}E_{\alpha,\HH^s}
\varphi_{\alpha}^{(N(\varepsilon,\alpha),i)}, T_{\HH^s}A_{\beta,\HH^s}^{t-1}
E_{\beta,\HH^s} \varphi_{\beta}^{(N(\varepsilon,\beta),j)}
\rangle_{\HH^S}, \label{eqn: cont 2}
\end{align}
where \eqref{eqn: cont 1} and \eqref{eqn: cont 2} both follow from
\eqref{eqn: operator identities}. 

Examining \eqref{eqn: discrete 4}
and \eqref{eqn: cont 2}, it is clear that to complete the proof we
must bound the quantity $\|a_{\alpha,n}(x^{(i)},\cdot) -
E_{\alpha,\HH^s}\varphi_{\alpha}^{(N(\varepsilon,\alpha),i)} \|_{\HH^s}$. We
break it into two parts:
\begin{equation} \label{eqn: mixed 1}
\left\|a_{\alpha,n}(x^{(i)},\cdot) -
  E_{\alpha,\HH^s}\varphi_{\alpha}^{(N(\varepsilon,\alpha),i)}
\right\|_{\HH^s} \leq \left\|a_{\alpha,n}(x^{(i)},\cdot) -
  a_{\alpha}(x^{(i)},\cdot) \right\|_{\HH^s} + \left\|
  a_{\alpha}(x^{(i)},\cdot) - E_{\alpha,\HH^s}
  \varphi_{\alpha}^{(N(\varepsilon,\alpha),i)} \right\|_{\HH^s}.
\end{equation}
For the first part, some simple manipulations give:
\begin{equation*}
a_{\alpha,n}(x^{(i)},x) - a_{\alpha}(x^{(i)},x) = f_{\alpha,n}^{(i)}(x) + g_{\alpha,n}^{(i)}(x),
\end{equation*}
where
\begin{equation*}
f_{\alpha,n}^{(i)}(x) = \frac{k_{\alpha}(x^{(i)},x) \, (\sqrt{m_{\alpha}(x)} -
  \sqrt{m_{\alpha,n}(x)})}{\sqrt{m_{\alpha,n}(x^{(i)})}
  \sqrt{m_{\alpha,n}(x)} \sqrt{m_{\alpha}(x)}}
\end{equation*}
and
\begin{equation*}
g_{\alpha,n}^{(i)}(x) = \frac{k_{\alpha}(x^{(i)},x) \, (\sqrt{m_{\alpha,n}(x^{(i)})} -
  \sqrt{m_{\alpha}(x^{(i)})})}{\sqrt{m_{\alpha,n}(x^{(i)})}
  \sqrt{m_{\alpha}(x^{(i)})} \sqrt{m_{\alpha}(x)}}.
\end{equation*}
For the first of these two functions, using Lemma \ref{lem: fcn
  bounds} and Lemma \ref{lem: rbd18} it is easy to see that $\|f_{\alpha,n}^{(i)}\|_{\HH^s}
\leq C(\alpha,d)\frac{\sqrt{\tau}}{\sqrt{n}}$ with probability
$1-2e^{-\tau}$. For $g_{\alpha,n}^{(i)}$, note that
\begin{align}
\left| m_{\alpha,n}(x^{(i)}) - m_{\alpha}(x^{(i)}) \right| &\leq
\sup_{x \in X}\left| m_{\alpha,n}(x) - m_{\alpha}(x) \right| \nonumber
\\
&= \left\|m_{\alpha,n} - m_{\alpha} \right\|_{C_b^0(X)} \nonumber \\
&\leq C(d) \left\|m_{\alpha,n} - m_{\alpha} \right\|_{\HH^{d+1}}
\nonumber \\
&\leq C(\alpha,d) \frac{\sqrt{\tau}}{\sqrt{n}}, \label{eqn: density 1}
\end{align}
where in \eqref{eqn: density 1} we once again used Lemma \ref{lem:
  rbd18}. Thus $\|g_{\alpha,n}^{(i)}\|_{\HH^s} \leq
C(\alpha,d)\frac{\sqrt{\tau}}{\sqrt{n}}$ with probability
$1-2e^{-\tau}$, and so we have bounded the first term on the right
hand side of \eqref{eqn: mixed 1}. For the second term on the right
hand side of \eqref{eqn: mixed 1}, recall the
definition of $\|\cdot\|_{\HH^s}$. If we can bound
$\|\partial_x^{\gamma} a_{\alpha}(x^{(i)},\cdot) - \partial_x^{\gamma}
E_{\alpha,\HH^s} \varphi_{\alpha}^{(N(\varepsilon,\alpha),i)} \|_{L^2(X,dx)}$,
where $\gamma = 0$ (i.e., no derivative) or $|\gamma| = s$, then we
will have bounded this term as well. Note that $a_{\alpha} \in
C_b^{d+1}(X \times X)$ implies that $\psi_{\alpha}^{(\ell)} \in
C_b^s(X)$ for all $\ell \geq 1$. Furthermore, the derivative
$\partial_x^{\gamma} a_{\alpha}(x^{(i)},\cdot)$ can be computed term
by term from \eqref{eqn: another mercer application}. Thus, using
nearly the same argument we used to show \eqref{eqn: cont row approx},
one can show that
\begin{equation} \label{eqn: cont uniform conv deriv}
\left| \partial_x^{\gamma} a_{\alpha}(x^{(i)},x) - \partial_x^{\gamma}
  E_{\alpha,\HH^s} \varphi_{\alpha}^{(N(\varepsilon,\alpha),i)}(x)
\right| < C_4(\alpha) \, \varepsilon, \quad \text{for all } x \in X, \enspace
|\gamma| \leq s.
\end{equation}
Using \eqref{eqn: cont uniform conv deriv}, we have:
\begin{equation*}
\left\|\partial_x^{\gamma} a_{\alpha}(x^{(i)},\cdot) - \partial_x^{\gamma}
E_{\alpha,\HH^s} \varphi_{\alpha}^{(N(\varepsilon,\alpha),i)}
\right\|_{L^2(X,dx)} \leq \sqrt{|X|} \, C_4(\alpha) \, \varepsilon,
\end{equation*}
where $|X|$ denotes the Lebesgue measure of $X$. Since $X$ was assumed
to be bounded, we have $|X| \leq C$. Returning to \eqref{eqn: mixed
  1}, we have now shown that:
\begin{equation*}
\left\| a_{\alpha,n}(x^{(i)},\cdot) - E_{\alpha,\HH^s}
  \varphi_{\alpha}^{(N(\varepsilon,\alpha),i)} \right\|_{\HH^s} \leq
C(\alpha,d) \frac{\sqrt{\tau}}{\sqrt{n}} + C \varepsilon.
\end{equation*}
Taking $\varepsilon = \frac{\sqrt{\tau}}{\sqrt{n}}$ completes the proof.
\end{proof}

\subsection{Proof of Theorem \ref{thm: global diff dist sample}}

Finally, we prove Theorem \ref{thm: global diff dist sample}.
\begin{theorem}[Theorem \ref{thm: global diff dist sample}]
Suppose that $(X,\mu)$ and $\{k_{\alpha}\}_{\alpha \in \I}$ satisfy the
conditions of Assumption \ref{assumption 1.5}. Let $n \in \N$ and
sample $X_n = \{x^{(1)}, \ldots, x^{(n)}\} \subset X$ i.i.d. according to
$\mu$; also let $t \in \N$, $\tau > 0$, and $\alpha,\beta \in \I$. Then, with
probability $1-2e^{-\tau}$, 
\begin{equation*}
\left| \Dt(\Gamma_{\alpha},\Gamma_{\beta}) -
  \Dt_n(\Gamma_{\alpha,n},\Gamma_{\beta,n}) \right| \leq
C(\alpha,\beta,d,t) \frac{\sqrt{\tau}}{\sqrt{n}}.
\end{equation*}
\end{theorem}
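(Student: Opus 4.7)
The plan is to mirror the proof of Theorem~\ref{thm: ptwise diff dist sample}, lifting everything to the common RKHS $\HH^s$ so that the high-probability Hilbert--Schmidt convergence $\|A_{\alpha,\HH^s}-A_{\alpha,n}\|_{HS}\leq C(\alpha,d)\sqrt{\tau/n}$ supplied by Theorem~\ref{thm: rbd15} can be invoked. Since $A_\alpha$ and $\A_\alpha$ are self-adjoint, I first expand
\begin{equation*}
\Dt(\Gamma_\alpha,\Gamma_\beta)^2 \;=\; \mathrm{Tr}(A_\alpha^{2t}) + \mathrm{Tr}(A_\beta^{2t}) - 2\,\mathrm{Tr}(A_\alpha^{t}A_\beta^{t}),
\end{equation*}
with the analogous expression for $\Dt_n(\Gamma_{\alpha,n},\Gamma_{\beta,n})^2$ in terms of $\A_\alpha,\A_\beta$. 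Using the factorizations $A_\alpha=R_{\HH^s}E_{\alpha,\HH^s}$ with $A_{\alpha,\HH^s}=E_{\alpha,\HH^s}R_{\HH^s}$, the empirical analogues from \eqref{eqn: operator identities}, and the cyclic property of trace, each of the six traces collapses to a trace on $\HH^s$: for instance,
\begin{equation*}
\mathrm{Tr}(A_\alpha^{2t}) = \mathrm{Tr}(A_{\alpha,\HH^s}^{2t}),\qquad \mathrm{Tr}(A_\alpha^{t}A_\beta^{t}) = \mathrm{Tr}(A_{\alpha,\HH^s}^{t}A_{\beta,\HH^s}^{t}),
\end{equation*}
and similarly $\mathrm{Tr}(\A_\alpha^{2t})=\mathrm{Tr}(A_{\alpha,n}^{2t})$, $\mathrm{Tr}(\A_\alpha^{t}\A_\beta^{t})=\mathrm{Tr}(A_{\alpha,n}^{t}A_{\beta,n}^{t})$. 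This reduces the task to controlling trace differences of powers of operators on $\HH^s$ whose HS convergence is already in hand.

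With these identities in place, I would bound each trace difference by telescoping. Writing $\Delta_\alpha\triangleq A_{\alpha,\HH^s}-A_{\alpha,n}$, the identity
\begin{equation*}
A_{\alpha,\HH^s}^{k}-A_{\alpha,n}^{k} = \sum_{j=0}^{k-1} A_{\alpha,\HH^s}^{k-1-j}\,\Delta_\alpha\,A_{\alpha,n}^{j}
\end{equation*}
and a parallel decomposition of $A_{\alpha,\HH^s}^{t}A_{\beta,\HH^s}^{t}-A_{\alpha,n}^{t}A_{\beta,n}^{t}$ produce sums of terms containing exactly one factor of $\Delta_\alpha$ or $\Delta_\beta$. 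Bounding each summand via the cyclic permutation trick together with $|\mathrm{Tr}(CD)|\leq\|C\|_{HS}\|D\|_{HS}$ and uniform operator-norm bounds on $A_{\alpha,\HH^s}$, $A_{\alpha,n}$ (which follow from the kernel bounds of Lemma~\ref{lem: fcn bounds}), and then applying Theorem~\ref{thm: rbd15} to both $\alpha$ and $\beta$ under a union bound, yields
\begin{equation*}
\left|\Dt(\Gamma_\alpha,\Gamma_\beta)^2 - \Dt_n(\Gamma_{\alpha,n},\Gamma_{\beta,n})^2\right|\leq C(\alpha,\beta,d,t)\frac{\sqrt{\tau}}{\sqrt{n}}
\end{equation*}
with probability at least $1-2e^{-\tau}$.

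The main obstacle is converting this squared-distance bound into the claimed bound on $|\Dt-\Dt_n|$, since the naive estimate $|\sqrt{a}-\sqrt{b}|\leq\sqrt{|a-b|}$ only gives the weaker rate $(\tau/n)^{1/4}$. To recover the full $\sqrt{\tau/n}$ rate I would introduce the self-adjoint surrogates $\tilde A_\alpha\triangleq T_{\HH^s}^{1/2}A_{\alpha,\HH^s}T_{\HH^s}^{-1/2}$ and $\tilde{\A}_\alpha\triangleq T_n^{1/2}A_{\alpha,n}T_n^{-1/2}$, defined on the ranges of $T_{\HH^s}^{1/2}$ and $T_n^{1/2}$ inside $\HH^s$. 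Using $T_{\HH^s}=R_{\HH^s}^*R_{\HH^s}$ together with its empirical analogue and the cyclic identities already derived, one verifies that both surrogates are self-adjoint, share the spectra of $A_\alpha$ and $\A_\alpha$ respectively, and satisfy $\Dt=\|\tilde A_\alpha^{t}-\tilde A_\beta^{t}\|_{HS}$ and $\Dt_n=\|\tilde{\A}_\alpha^{t}-\tilde{\A}_\beta^{t}\|_{HS}$ after zero-extension to all of $\HH^s$. The reverse triangle inequality in this common HS space then controls $|\Dt-\Dt_n|$ by $\|\tilde A_\alpha^{t}-\tilde{\A}_\alpha^{t}\|_{HS}+\|\tilde A_\beta^{t}-\tilde{\A}_\beta^{t}\|_{HS}$, which I would reduce to the HS perturbations governed by Theorems~\ref{thm: rbd7} and~\ref{thm: rbd15}. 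The delicate step here is handling the possibly unbounded inverse square roots $T_{\HH^s}^{-1/2}$ and $T_n^{-1/2}$: since $T_n$ has only finitely many nonzero eigenvalues, this requires a spectral perturbation argument restricted to the (high-probability) common effective range of $T_{\HH^s}$ and $T_n$, rather than a direct operator norm bound.
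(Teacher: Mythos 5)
Your first stage (the trace expansion and the telescoping of powers) is sound as far as it goes, and you correctly diagnose that it only controls $\bigl|\Dt(\Gamma_{\alpha},\Gamma_{\beta})^2-\Dt_n(\Gamma_{\alpha,n},\Gamma_{\beta,n})^2\bigr|$, hence yields only the $(\tau/n)^{1/4}$ rate after taking square roots. The genuine gap is in the proposed repair. The surrogates $\tilde A_{\alpha}=T_{\HH^s}^{1/2}A_{\alpha,\HH^s}T_{\HH^s}^{-1/2}$ and $\tilde{\A}_{\alpha}=T_n^{1/2}A_{\alpha,n}T_n^{-1/2}$ are only the beginning of an argument: even granting their self-adjointness and the identities $\Dt=\|\tilde A_{\alpha}^{t}-\tilde A_{\beta}^{t}\|_{HS}$ and $\Dt_n=\|\tilde{\A}_{\alpha}^{t}-\tilde{\A}_{\beta}^{t}\|_{HS}$ (which themselves need the polar decomposition $R_{\HH^s}=UT_{\HH^s}^{1/2}$ and a zero-extension argument you do not supply), the estimate you actually need, $\|\tilde A_{\alpha}^{t}-\tilde{\A}_{\alpha}^{t}\|_{HS}\leq C\sqrt{\tau/n}$, is never reduced to Theorems \ref{thm: rbd7} and \ref{thm: rbd15}. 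Those theorems control the unconjugated operators $T_{\HH^s}-T_n$ and $A_{\alpha,\HH^s}-A_{\alpha,n}$; after conjugation you must compare operators living on different subspaces (the closure of $\mathrm{ran}\,T_{\HH^s}^{1/2}$ versus the at most $n$-dimensional $\mathrm{ran}\,T_n^{1/2}$) through unbounded inverse factors, and since $T_{\HH^s}$ is compact its spectrum accumulates at $0$, so there is no spectral gap on which to base the ``effective range'' perturbation you gesture at; any truncation level introduces a tail error that must itself be balanced against $\sqrt{\tau/n}$, and none of this is carried out. As written, the proof delivers only the weaker rate.

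The detour is also unnecessary, and this is where your route diverges from the paper's: work with the distances themselves, never their squares. By \eqref{eqn: operator identities} one has $A_{\alpha}^{t}-A_{\beta}^{t}=R_{\HH^s}C$ and $A_{\alpha,\HH^s}^{t}-A_{\beta,\HH^s}^{t}=CR_{\HH^s}$ with $C=A_{\alpha,\HH^s}^{t-1}E_{\alpha,\HH^s}-A_{\beta,\HH^s}^{t-1}E_{\beta,\HH^s}$, so the two differences share their nonzero eigenvalues; this is exactly the Proposition-13-type identification the paper invokes to write $\Dt(\Gamma_{\alpha},\Gamma_{\beta})=\|A_{\alpha,\HH^s}^{t}-A_{\beta,\HH^s}^{t}\|_{HS}$ and, analogously, $\Dt_n(\Gamma_{\alpha,n},\Gamma_{\beta,n})=\|A_{\alpha,n}^{t}-A_{\beta,n}^{t}\|_{HS}$. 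Both quantities are then Hilbert--Schmidt norms in the single space of operators on $\HH^s$, so the reverse triangle inequality bounds $|\Dt-\Dt_n|$ by $\|A_{\alpha,\HH^s}^{t}-A_{\alpha,n}^{t}\|_{HS}+\|A_{\beta,\HH^s}^{t}-A_{\beta,n}^{t}\|_{HS}$, and your own telescoping of powers, combined with the operator-norm bounds from Lemma \ref{lem: fcn bounds} and Theorem \ref{thm: rbd15} (with a union bound over $\alpha$ and $\beta$), finishes at the full $\sqrt{\tau}/\sqrt{n}$ rate, with no $T_{\HH^s}^{-1/2}$ or squared distances ever appearing. In short: keep your telescoping machinery, discard the trace expansion and the conjugated surrogates, and apply the spectral identification directly to $\Dt$ and $\Dt_n$.
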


\begin{proof}
Recall that $\Dt(\Gamma_{\alpha},\Gamma_{\beta}) = \|A_{\alpha}^t -
A_{\beta}^t\|_{HS}$. From Proposition $13$ in
\cite{rosasco:learningIntegralOps2010}, we know that $\lambda \in
(0,1]$ is an eigenvalue of $A_{\alpha}$ if and only if it is an
eigenvalue of $A_{\alpha,\HH^s}$. Using the same ideas, one can show
that $\lambda' \neq 0$ is an eigenvalue of $A_{\alpha}^t-A_{\beta}^t$ if
and only if it is an eigenvalue of
$A_{\alpha,\HH^s}^t-A_{\beta,\HH^s}^t$. Therefore,
\begin{equation*}
\left\|A_{\alpha}^t-A_{\beta}^t\right\|_{HS} =
\left\|A_{\alpha,\HH^s}^t-A_{\beta,\HH^s}^t\right\|_{HS}. 
\end{equation*}
Similarly, one can show that
\begin{equation*}
\left\|\A_{\alpha}^t-\A_{\beta}^t\right\|_{HS} =
\left\|A_{\alpha,n}^t-A_{\beta,n}^t\right\|_{HS}. 
\end{equation*}
Thus, using the above and Theorem \ref{thm: rbd15} we have, with
probability $1-2e^{-\tau}$, 
\begin{align*}
\Dt(\Gamma_{\alpha},\Gamma_{\beta}) &=
\|A_{\alpha,\HH^s}^t-A_{\beta,\HH^s}^t\|_{HS} \\
&\leq \|A_{\alpha,n}^t-A_{\beta,n}^t\|_{HS} +
\|A_{\alpha,\HH^s}^t-A_{\alpha,n}^t\|_{HS} +
\|A_{\beta,\HH^s}^t-A_{\beta,n}^t\|_{HS} \\
&\leq \Dt_n(\Gamma_{\alpha,n},\Gamma_{\beta,n}) + C(\alpha,\beta,d,t)
\frac{\sqrt{\tau}}{\sqrt{n}}. 
\end{align*}
Since the argument is symmetric, we get the desired inequality.
\end{proof}
\end{appendix}

\bibliographystyle{model1-num-names}
\bibliography{DiffusionGeometry}

\end{document}